\documentclass[11pt]{article}

\usepackage[margin=1in]{geometry}
\usepackage[T1]{fontenc}
\usepackage{lmodern}
\usepackage{amsmath,amssymb,amsthm,mathtools}
\usepackage{booktabs,tabularx,array}
\usepackage{microtype}
\usepackage{parskip}
\usepackage{xcolor}
\usepackage[
  colorlinks=true,
  linkcolor=blue!55!black,
  citecolor=blue!55!black,
  urlcolor=blue!65!black,
  pdftitle={Codimension-one simplex configurations and pinned simplices in Salem sets},
  pdfsubject={Falconer-type simplex configurations},
  pdfkeywords={simplices, Hausdorff dimension, Salem sets, pinned configurations, Frostman measures}
]{hyperref}

\newtheorem{theorem}{Theorem}[section]
\newtheorem{proposition}[theorem]{Proposition}
\newtheorem{lemma}[theorem]{Lemma}
\newtheorem{corollary}[theorem]{Corollary}
\theoremstyle{remark}
\newtheorem{remark}[theorem]{Remark}

\newcommand{\R}{\mathbb R}
\newcommand{\Sph}{\mathbb S}
\newcommand{\cL}{\mathcal L}
\newcommand{\cH}{\mathcal H}
\newcommand{\supp}{\operatorname{supp}}
\newcommand{\dist}{\operatorname{dist}}

\newcommand{\eps}{\varepsilon}
\newcommand{\wh}{\widehat}
\newcolumntype{Y}{>{\raggedright\arraybackslash}X}
\numberwithin{equation}{section}

\newcommand{\Fq}{\mathbb F_q}

\newcommand{\cT}{\mathcal T}
\newcommand{\cY}{\mathcal Y}
\newcommand{\cA}{\mathcal A}

\newcommand{\Orth}{\operatorname O}
\newcommand{\Span}{\operatorname{span}}
\newcommand{\rank}{\operatorname{rank}}
\newcommand{\rad}{\operatorname{rad}}
\newcommand{\1}{\mathbf 1}
\DeclareMathOperator{\Aff}{Aff}

\newcolumntype{P}[1]{>{\raggedright\arraybackslash}p{#1}}

\title{Distribution of simplices in the discrete and continuous settings}
\author{Thang Pham\and Chun-Yen Shen\and Boqing Xue}
\date{}

\begin{document}
\maketitle
\begin{abstract}
In this paper, we study the distribution of simplices in both discrete and continuous settings. Let \(q\) be an odd prime power, let \(Q\) be a nondegenerate
quadratic form on \(\mathbb F_q^d\), and let \(2\leq k\leq d-1\).
We prove that every set \(E\subset\mathbb F_q^d\) with
\[
 |E|\geq C_{d,k}q^{\beta_{d,k}},
 \qquad
 \beta_{d,k}=
 \begin{cases}
 \displaystyle \frac{d+k}{2}-\frac{k-1}{k+1},
     & d-k\ \text{even},\\[2mm]
 \displaystyle \frac{d+k-1}{2},
     & d-k\ \text{odd},
 \end{cases}
\]
determines a positive proportion of all ordered nondegenerate
\(k\)-simplex congruence classes. This improves the previous exponent due to Bennett,
Hart, Iosevich, Pakianathan, and Rudnev (2017), and is sharp when \(d-k\) is odd. In the Euclidean setting, we prove that if \(E\subset\mathbb R^d\) is
compact and \(\dim_{\mathrm H}(E)>d-1\), then there exists a Frostman
probability measure \(\mu\), supported on \(E\), and a set of pins of
full \(\mu\)-measure such that the pinned distance
configuration measure for labeled \((d-1)\)-simplices is absolutely
continuous at every such pin. We also show that the same conclusion holds when \(E\subset\mathbb R^d\) is a compact Salem set with
\(\dim_{\mathrm H}(E)>k\). 
\end{abstract}

\noindent\textit{2020 Mathematics Subject Classification.}
28A78, 42B20, 44A12.
\par\smallskip\noindent
\textit{Keywords.}
Falconer simplex problem, pinned configurations, Salem sets,
Blaschke--Petkantschin formula, cylindrical projections.

\section{Introduction}
In this paper, we build on the algebraic framework introduced by the authors \cite{PSX} for distances and triangles and extend it to the study
of higher-dimensional simplices.  Our work may therefore be viewed as
a continuation and further development of \cite{PSX}: the underlying
algebraic philosophy remains central, while additional geometric and
inductive ingredients are introduced to handle simplices of arbitrary
dimension.

\subsection{Results over finite fields}
For \(k\geq1\), we write $t_k=\binom{k+1}{2}$. Let \(q\) be an odd prime power, let \(V=\Fq^d\), and let
\(Q:V\to\Fq\) be a nondegenerate quadratic form.  We write
\[
 B(u,v)=\frac{Q(u+v)-Q(u)-Q(v)}2
\]
for the associated symmetric bilinear form and denote the orthogonal
group of \((V,Q)\) by \(\Orth(Q)\).

An ordered \(k\)-simplex in \(E\subset V\) is a tuple
\[
        \mathbf x=(x_0,x_1,\ldots,x_k)\in E^{k+1}.
\]
Two ordered simplices \(\mathbf x\) and \(\mathbf x'\) are
\emph{congruent} if
\[
        x_i'=z+\theta x_i\qquad(0\leq i\leq k)
\]
for some \(z\in V\) and \(\theta\in\Orth(Q)\).  With
\(u_i=x_i-x_0\), the edge Gram matrix of \(\mathbf x\) is
\[
        G(\mathbf x)
        =
        \bigl(B(u_i,u_j)\bigr)_{1\leq i,j\leq k}.
\]
We call \(\mathbf x\) \emph{nondegenerate} if
\(\det G(\mathbf x)\neq0\).  By Witt's extension theorem, two
nondegenerate ordered simplices are congruent if and only if their edge
Gram matrices are equal.

Let \(\cT^d_{k,Q}(E)\) denote the set of ordered \(k\)-simplex
congruence classes determined by \(E\), and let
\(\cT^{d,\mathrm{nd}}_{k,Q}(E)\) denote its subset of nondegenerate
classes.  For \(2\leq k\leq d-1\), the nondegenerate classes in \(V\)
are in one-to-one correspondence with the nonsingular symmetric
\(k\times k\) matrices.  Consequently,
\[
 \bigl|\cT^{d,\mathrm{nd}}_{k,Q}(V)\bigr|
 =
 q^{t_k}+O_k(q^{t_k-1}).
\]
Thus a positive proportion result has the form
\[
 \bigl|\cT^{d,\mathrm{nd}}_{k,Q}(E)\bigr|
 \gg q^{t_k}.
\]
All implicit constants below may depend on \(d\) and \(k\), but not
on \(q\) or \(E\).

The finite field simplex problem has been studied using Fourier
analysis, orthogonal-group actions, and incidence estimates, see \cite{BHIPR, 
BIP, CEHIK, McDonald, Parshall, PhamFixedSide, VinhBilinear, VinhKaleidoscopic}. The record was due to Bennett,
Hart, Iosevich, Pakianathan, and Rudnev in \cite{BHIPR}. They proved that, for
every nondegenerate \(Q\),
\begin{equation}\label{eq:BHIPR-intro}
 |E|\gg q^{\,d-(d-1)/(k+1)}
 \quad\Longrightarrow\quad
 |\cT^d_{k,Q}(E)|\gg q^{t_k}.
\end{equation}

For \(2\leq k\leq d-1\), define
\begin{equation}\label{eq:beta}
 \beta_{d,k}=
 \begin{cases}
 \displaystyle \frac{d+k}{2}-\frac{k-1}{k+1},
       & d-k\ \mathrm{even},\\[2mm]
 \displaystyle \frac{d+k-1}{2},
       & d-k\ \mathrm{odd}.
 \end{cases}
\end{equation}
Our main finite field result reads as follows.
\begin{theorem}\label{thm:finite-main}
Let \(d\geq3\), let \(q\) be an odd prime power, let \(Q\) be a
nondegenerate quadratic form on \(\Fq^d\), and let
\(2\leq k\leq d-1\).  There are constants
\(C_{d,k},c_{d,k}>0\) such that every set \(E\subset\Fq^d\)
satisfying
\[
 |E|\geq C_{d,k}q^{\beta_{d,k}}
\]
also satisfies
\[
 |\cT^{d,\mathrm{nd}}_{k,Q}(E)|
 \geq c_{d,k}q^{t_k}.
\]
If \(d-k\) is odd, then the exponent $\beta_{d,k}=(d+k-1)/2$ is optimal.
\end{theorem}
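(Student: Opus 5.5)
We plan to count nondegenerate ordered $k$-simplices via Cauchy--Schwarz and reduce the problem to an $L^2$ bound for a "congruence energy". For a nonsingular symmetric matrix $M$, let
\[
\nu(M)=\#\{\mathbf x\in E^{k+1}:\ G(\mathbf x)=M\}.
\]
Then
\[
\Bigl(\sum_M\nu(M)\Bigr)^2\le|\cT^{d,\mathrm{nd}}_{k,Q}(E)|\sum_M\nu(M)^2 .
\]
Nondegenerate tuples form all but a $O(q^{-1})$ proportion of $E^{k+1}$ once $|E|\gg q^{(d+k-1)/2}$. We will show this by counting tuples whose edge vectors lie in a degenerate subspace, using the induction on $k$ that builds simplices one vertex at a time. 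It therefore suffices to prove
\[
\sum_M\nu(M)^2\ll |E|^{2k+2}q^{-t_k}.
\]
By Witt's theorem, $\sum_M\nu(M)^2$ counts pairs $(\mathbf x,\mathbf y)$ with $y_i=z+\theta x_i$. Following the algebraic framework of \cite{PSX}, we will not parametrize over $\Orth(Q)$. Instead we fix the first $k$ vertices and induct on $k$. Given congruent nondegenerate $(k-1)$-simplices $\mathbf x',\mathbf y'$, the number of admissible last vertices is an incidence count. The point $x_k$ must satisfy prescribed values of $B(x_k-x_0,u_i)$ and $Q(x_k-x_0)$. This means it lies on a "sphere" $Q(x-x_0)=r$ intersected with an affine subspace of codimension $k-1$ orthogonal to $\Span(u_1,\dots,u_{k-1})$, and similarly for $y_k$.

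The key step is then an $L^2$ estimate for the incidences between $E$ and these codimension-$(k-1)$ sections of spheres, i.e.\ spheres of dimension $d-k$ living in a $(d-k+1)$-dimensional affine slice. We would compute the Fourier transform of such a slice-sphere. It factors as a Fourier transform of the subspace indicator times a Gauss/Kloosterman sum on the restricted quadratic form in $d-k+1$ variables. Its decay depends on the parity of $d-k+1$. When $d-k$ is odd, the restricted form has an even number of variables, and the sphere transform has clean square-root decay $q^{-(d-k+1)/2}$ with no Kloosterman loss. This yields the threshold $(d+k-1)/2$ directly. When $d-k$ is even, the odd-dimensional sphere carries a Salié-sum term with slightly worse control. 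There we interpolate against the trivial bound, averaged over the $k$ steps of the induction, which produces the loss $\frac{k-1}{k+1}$ beyond $(d+k-1)/2$; that is, we get $\frac{d+k}{2}-\frac{k-1}{k+1}$. Concretely, we would optimize a Hölder split between the $k+1$ vertex stages. This step, getting the exact exponent $\frac{k-1}{k+1}$ from balancing the stages in the even case, is the main obstacle. We would first try a dyadic pigeonholing on the sizes of the fibers $\nu(M)$.

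For sharpness when $d-k$ is odd, we construct $E=W\times U$. Here $U\subset\Fq^{d}$ is a totally isotropic-type subspace of dimension $(d-k-1)/2$; its existence and orthogonality to the complement uses $d-k-1$ even and, after extending $\Fq$ if necessary, a hyperbolic splitting. $W$ is a $k$-dimensional nondegenerate subspace, so $|E|=q^{(d+k-1)/2}$ up to constants. Alternatively we take $U$ isotropic and replace $W$ by a $k$-dimensional space containing a radical line. In that case every Gram matrix of a simplex in $E$ has a forced degeneracy or a constraint that cuts the number of classes to $o(q^{t_k})$. Choosing the variant in which all Gram matrices have a vanishing minor shows that $q^{(d+k-1)/2}$ with a small constant cannot be lowered. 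We would verify that the admissible Gram matrices then number only $O(q^{t_k-1})$.
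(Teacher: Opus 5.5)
\textbf{There is a genuine gap.} Everything rests on the congruence-energy bound $\sum_M\nu(M)^2\ll|E|^{2k+2}q^{-t_k}$ at $|E|\approx q^{\beta_{d,k}}$, and the mechanism you sketch cannot deliver it.

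Fix a nondegenerate base $\mathbf x'$ with edge span $U$. The conditions $B(x-x_0,u_i)=a_i$ only record which coset of $U^\perp$ contains $x$. At the $q^{k-1}$ frequencies annihilating $U^\perp$, the Fourier transform of a slice-sphere has modulus $\approx q^{d-k}$, with no decay at all. Gauss, Kloosterman or Salié bounds therefore only control how $E$ is spread over the spheres \emph{inside} one coset. The quantity that actually drives the last-vertex incidence count is $\sum_c|E\cap c|^2$ over the cosets $c$ of $U^\perp$. This is a projection-type quantity, and you would have to bound it on average over the $E$-dependent family of spans $U$, weighted by the very multiplicities $\nu(M')$ you are trying to control.

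That averaged equidistribution is what the orthogonal-group moment of BHIPR supplies, and it stops at $d-\frac{d-1}{k+1}$. Dividing the second term of Lemma~\ref{lem:BHIPR-motion-moment} (with $r=k+1$) by the stabilizer size $\approx q^{(d-k)(d-k-1)/2}$ gives $q^{dk+1-t_k}|E|^{k+1}$. This is dominated by $|E|^{2k+2}q^{-t_k}$ only when $|E|\geq q^{d-(d-1)/(k+1)}$. Nothing in the paper gives a full-energy bound at $q^{\beta_{d,k}}$; the paper deliberately circumvents it and explains why improving this $L^2$ route directly looks very hard.

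Your parity heuristic is also reversed. For nonzero radius, Kloosterman sums arise in an even number of variables and Salié sums in an odd number, and both obey the same $2\sqrt q$ bound. Parity only matters for the zero-radius cone, where the even case is the worse one. The even-codimension exponent is numerology as well: $\frac{d+k}{2}-\frac{k-1}{k+1}$ is not $\frac{d+k-1}{2}+\frac{k-1}{k+1}$. For $k\geq4$ it even lies below $\frac{d+k-1}{2}$, so it cannot be a loss on top of that value.

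\textbf{The missing idea is the paper's dimension reduction.} Lemma~\ref{lem:large-projection}, built on both halves of Lemma~\ref{lem:isotropic-differences}, produces an isotropic line $\ell$ and a coset $H$ of $\ell^\perp$ such that $E\cap H$ has more than $M$ images in the nondegenerate quotient $\ell^\perp/\ell$ whenever $|E|\geq 8qM$. Lemma~\ref{lem:preserve-gram} shows this projection preserves every Gram matrix. Each step therefore costs two dimensions and one unit of exponent. The iteration ends at BHIPR in dimension $k$ when $d-k$ is even, and at the codimension-one case in dimension $k+1$ when $d-k$ is odd. That terminal case is a base--apex argument: Lemma~\ref{lem:base-second-moment} handles the bases, and Lemma~\ref{lem:isotropic-pairs} with the variance identity of Lemma~\ref{lem:hyperplane-variance} handles apex collisions. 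These are assembled via $\sum_D m_D/S_D$ and H\"older precisely so that no full-energy bound is needed.

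Two further points.
\begin{enumerate}
\item Your claim that all but $O(q^{-1})$ of the tuples are nondegenerate once $|E|\gg q^{(d+k-1)/2}$ is false for $d-k$ odd. Take $E=W\cup R$, where $W$ is a subspace of dimension $(d+k-1)/2$ carrying only degenerate $k$-simplices and $|R|=(C-1)|W|$; then a fraction $C^{-(k+1)}$ of tuples is degenerate. Moreover, the vertex-by-vertex count excludes $O(q^{d-1})$ points per step, so it only works when $|E|\gg q^{d-1}$.
\item For sharpness, your first construction $W\oplus U$ with $W$ nondegenerate of dimension $k$ is no obstruction. It realizes every nonsingular Gram matrix with the discriminant of $W$, which is about half of all classes. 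Your second variant (dimension $(d+k-1)/2$, $\rank(B|_W)=k-1$) is the right one and carries no nondegenerate simplex at all. However, you may not extend $\Fq$. Build it inside $\Fq^d$ by first choosing a split subspace $S\simeq\mathbb H^{\perp(d-k+1)/2}$ and setting $U=S^\perp$, as in Proposition~\ref{prop:sharpness}.
\end{enumerate}
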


Theorem~\ref{thm:finite-main} improves the exponent in
\eqref{eq:BHIPR-intro} throughout the range \(2\leq k\leq d-1\).
More precisely, the gain is
\[
 \left(d-\frac{d-1}{k+1}\right)-\beta_{d,k}
 =
 \begin{cases}
 \displaystyle
 \frac{k-1}{k+1}\,\frac{d-k}{2},
       & d-k\ \mathrm{even},\\[3mm]
 \displaystyle
 \frac{(k-1)(d-k-1)/2+1}{k+1},
       & d-k\ \mathrm{odd}.
 \end{cases}
\]
Both quantities are positive. Sharpness in the odd-codimension case is established in
Section~\ref{sec:sharpness}.

Note that if $k=2$ and $d$ is even, then it has been proved in \cite{PSX} that the better threshold of $q^{\frac{d}{2}+\frac{3}{5}}$ is sufficient. 

\subsection{Results in the Euclidean setting}

If \(E\subset\R^d\) is compact, its labelled \(k\)-simplex distance
set is
\[
\Delta_k(E)
=
\left\{
\bigl(\lvert x_i-x_j\rvert\bigr)_{0\leq i<j\leq k}
:
x_0,\ldots,x_k\in E
\right\}
\subset\R^{t_k}.
\]
Here \emph{labelled} means that no quotient by permutations of the
vertices is taken.  We write \(\Delta_k^{\mathrm{nd}}(E)\) for the
subset arising from affinely independent vertices.  The Falconer
simplex problem asks for dimensional hypotheses on \(E\) that imply
\[
        \cL^{t_k}\bigl(\Delta_k(E)\bigr)>0.
\]
For \(k=1\), this is the classical Falconer distance problem \cite{Falconer}.

Fix \(x\in E\), put \(z_0=x\), and, for \(1\leq j<d\), define the
pinned squared-distance map
\begin{equation}
\Phi_{j,x}(z_1,\ldots,z_j)
=
\bigl(\lvert z_a-z_b\rvert^2\bigr)_{0\leq a<b\leq j}
\in\R^{t_j}.
\label{eq:pinned-map-intro}
\end{equation}
For $1\le j<d$, $x\in\R^d$, and $z_0=x$, define the pinned
ordinary-distance map
\[
D_{j,x}(z_1,\ldots,z_j)
=
\bigl(
|z_a-z_b|
\bigr)_{0\le a<b\le j}
\in\R^{t_j}.
\]
Thus, for a probability measure $\mu$, the pinned squared-distance and
ordinary-distance configuration measures are
\[
(\Phi_{j,x})_\#\mu^j
\qquad\text{and}\qquad
(D_{j,x})_\#\mu^j,
\]
respectively, where $\mu^j$ denotes the $j$-fold product measure.

We write \(\Delta_{j,x}(E)\) for the corresponding pinned
ordinary-distance set and \(\Delta_{j,x}^{\mathrm{nd}}(E)\) for the
subset arising from affinely independent vertices.

Our first Euclidean result is on the case $k=d-1$.

\begin{theorem}\label{thm:euclidean-codim-one}
Let \(d\geq3\), let \(m=\binom d2\), and let \(E\subset\R^d\) be
compact.  If
\[
        \dim_{\mathrm H}(E)>d-1,
\]
then there exist a probability measure \(\mu\), supported on \(E\),
and a Borel set \(E_\mu\subset\supp\mu\), with \(\mu(E_\mu)=1\), such
that, for every \(x\in E_\mu\),
\[
        (\Phi_{d-1,x})_\#\mu^{d-1}\ll\cL^m.
\]
Consequently,
\[
        \cL^m\bigl(\Delta_{d-1,x}^{\mathrm{nd}}(E)\bigr)>0
        \qquad (x\in E_\mu).
\]
\end{theorem}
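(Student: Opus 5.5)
Since $\dim_{\mathrm H}(E)>d-1$, Frostman's lemma gives a probability measure $\mu$ on $E$ with $\mu(B(x,r))\lesssim r^{s}$ for some $s\in(d-1,\dim_{\mathrm H}E)$. In particular the energy $I_{s'}(\mu)<\infty$ for every $s'<s$. The plan is to prove absolute continuity of $\nu_x:=(\Phi_{d-1,x})_\#\mu^{d-1}$ inductively along the flag of pinned subsimplices.

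Write $\Phi_{d-1,x}(z_1,\dots,z_{d-1})=(\Phi_{d-2,x}(z_1,\dots,z_{d-2}),\,|z_{d-1}-x|^2,\,|z_{d-1}-z_1|^2,\dots,|z_{d-1}-z_{d-2}|^2)$. Fix an affinely independent $(z_1,\dots,z_{d-2})$ together with the pin $x$, and let $H$ be the $(d-2)$-flat they span. Up to an affine change of variables, the last $d-1$ coordinates are determined by two things: the orthogonal projection $\pi_H(z_{d-1})\in H$, and the squared distance $\rho=\dist(z_{d-1},H)^2$. Concretely, the differences $|z_{d-1}-z_i|^2-|z_{d-1}-x|^2$ are affine in $\pi_H z_{d-1}$, and $|z_{d-1}-x|^2=|\pi_H z_{d-1}-x|^2+\rho$. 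So the conditional map is a ``cylindrical projection'' $P_H:\R^d\to H\times[0,\infty)$, $z\mapsto(\pi_H z,\dist(z,H)^2)$. Its fibres are $(d-2)$-spheres centered in $H$ and orthogonal to it, and its Jacobian degenerates only on $H$.

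Disintegrating $\nu_x$ over the first $d-2$ vertices gives $\nu_x=\int (\text{affine image of } (P_H)_\#\mu)\,d\mu^{d-2}(z_1,\dots,z_{d-2})$. It therefore suffices to show $\|(P_H)_\#\mu\|_{L^2}$ is finite for $\mu^{d-2}$-a.e.\ tuple and integrable against the Jacobian factor coming from the affine change of variables. That factor is $\sim\mathrm{vol}(x,z_1,\dots,z_{d-2})^{-1}$, and its integrability follows from the Frostman condition with $s>d-1\geq d-2$ by induction on the number of vertices. Apply Cauchy–Schwarz in the averaged form
\[
\int\!\!\int \bigl\|(P_H)_\#\mu\bigr\|_{L^2}^2\,w\,d\mu^{d-2}\,d\mu(x)<\infty,
\]
where $w$ is the Jacobian weight. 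Fubini then gives a full-measure set $E_\mu$ of pins $x$ for which $\nu_x$ has an $L^1$ density. This proves $\nu_x\ll\cL^m$, and positivity of $\cL^m(\Delta^{\mathrm{nd}}_{d-1,x}(E))$ follows. Degenerate tuples carry zero $\mu^{d-1}$-mass because $s>d-2$, and squared versus ordinary distances differ by a local diffeomorphism away from zero.

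\textbf{Main obstacle.} The key step, and the one I expect to be hardest, is the $L^2$ bound for $(P_H)_\#\mu$ averaged over flags $H$. Expanding $\|(P_H)_\#\mu\|_2^2$ gives $\iint \delta(\pi_H(z-w))\,\delta(\dist(z,H)^2-\dist(w,H)^2)\,d\mu(z)\,d\mu(w)$. For fixed $z,w$, averaging over $H$ through $x$ (sampled via $\mu^{d-2}$) is handled by a Blaschke–Petkantschin formula. The set of $(d-2)$-flats with $z-w\perp H$ and $z,w$ equidistant from $H$ has codimension $(d-2)+1=d-1$ in the flag manifold, and its measure is $\lesssim |z-w|^{-(d-1)}$, with an extra loss near $x$ controlled by the $\mu$-mass of small balls. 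So the whole average reduces to $I_{d-1+\epsilon}(\mu)$-type energies, which are finite exactly because $s>d-1$. Making the Blaschke–Petkantschin change of variables rigorous when $H$ is sampled by $\mu^{d-2}$ rather than by Haar measure is the delicate part. I would first try to transfer it by slicing: write each $z_i$ in polar coordinates about $x$ and use the Frostman bound on each slice.
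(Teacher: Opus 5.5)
There is a genuine gap: the step that transfers the Haar-averaged $L^2$ estimate to flats spanned by $\mu$-random points is missing, and the route you sketch for it does not close.

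Your outline otherwise has the same architecture as the paper's proof:
\begin{itemize}
\item an $s$-Frostman measure with $s>d-1$;
\item the cylindrical map $z\mapsto(\pi_Hz,\dist(z,H)^2)$ followed by a constant-Jacobian diffeomorphism onto the apex-distance star, which is the paper's $T_Y\circ C_L$;
\item the kernel $|z-w|^{-(d-1)}$ for Haar-random flats;
\item a Fubini choice of pins followed by a fixed-pin induction.
\end{itemize}
Two small slips: since $H$ has codimension two, the fibres of $P_H$ are circles, not $(d-2)$-spheres; and the change of variables is a shear, not an affine map.

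\textbf{Why your route for the transfer fails.} With $w\sim\mathrm{vol}^{-1}$, your weighted quantity is, up to constants, $\int\|(S_Y)_\#\mu\|_{L^2}^2\,d\mu^{d-1}(Y)$ with $Y=(x,z_1,\ldots,z_{d-2})$. Expand the square and fix an apex pair $(z,w)$, with $u=z-w$, $\omega=u/|u|$, $\bar z=(z+w)/2$. Each of the $d-1$ base vertices contributes $\int K_\eps(|z-y|^2-|w-y|^2)\,d\mu(y)$. In the limit $\eps\to0$ this is $(2|u|)^{-1}$ times the density of $(\pi_\omega)_\#\mu$ at the bisector value $\bar z\cdot\omega$. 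So under $\mu$-sampling the ``measure of bad flats'' is $|u|^{-(d-1)}h_\omega(\bar z\cdot\omega)^{d-1}$, not $|u|^{-(d-1)}$. A Frostman bound only gives slab masses $\lesssim t^{s-d+1}$, which does not control these densities. Uniform projection bounds are exactly what the Salem hypothesis supplies in the proof of Theorem~\ref{thm:pinned-salem}. Without them, neither a Blaschke--Petkantschin change of variables nor polar slicing about $x$ carries the Haar computation over to a quantitative $\mu^{d-1}$-average, and you give no substitute.

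\textbf{The missing idea.} No quantitative $\mu$-average is needed. It suffices to have two facts:
\begin{itemize}
\item $\|(C_L)_\#\mu\|_{L^2}<\infty$ for $dL$-almost every flat, which is the Haar-averaged estimate bounded by $I_{d-1}(\mu)$;
\item the law of $\Aff(Y)$ under $\mu^{d-1}$ is absolutely continuous with respect to $dL$.
\end{itemize}
Then the good set of flats is hit for $\mu^{d-1}$-a.e.\ base.

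The paper proves the second fact in Proposition~\ref{prop:hull-ac}. The inverse Blaschke--Petkantschin identity gives $\|R_{d-2}f\|_{L^{d-1}}^{d-1}\approx\int\mathcal V_{d-2}(Y)^{-2}\prod_i f(y_i)\,dY$. The span density is dominated by $(R_{d-2}f_\eps)^{d-1}$, and mollification plus uniform integrability passes to the limit.

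The essential input is the inverse-square volume energy $\int\mathcal V_{d-2}^{-2}\,d\mu^{d-1}<\infty$ of Proposition~\ref{prop:inverse-volume}; the exponent $2$ is the codimension of the base span. By Gram--Schmidt and $\mu(N_r(P))\lesssim r^{s-j}$, this requires $s-(d-3)>2$, i.e.\ $s>d-1$. Besides $I_{d-1}(\mu)<\infty$, this is where the hypothesis enters. It is not the integrability of $\mathrm{vol}^{-1}$ you cite, which needs only $s>d-2$.

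\textbf{Lower ranks.} The lower levels of your induction also need star estimates at ranks $j<d-1$. The paper obtains them by marginalizing the full-rank $L^2$ star density (Lemma~\ref{lem:codim-marginal-descent}). Redoing your construction for lower-dimensional flats would run into the same transfer issue.
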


Several approaches to Euclidean simplex configurations have been
developed, including Fourier-analytic, group-action, and microlocal
methods
\cite{EHI,GGIP,GILP,GILPElementary,GIT,GITMicrolocal, IPPS}.

For general compact sets, the best previously known unpinned result in
the codimension-one case follows by combining the group-action
criterion of Greenleaf--Iosevich--Liu--Palsson \cite{GILP} with the
spherical-average estimate of Du--Zhang \cite{DuZhang}; see also
\cite[Theorem~1.16]{BOP}.  It gives
\[
 \dim_{\mathrm H}(E)>d-1+\frac1{d+1},
\]
under which the natural \((d-1)\)-simplex configuration measure has an
\(L^2\) density. For the pinned problem, the best previous result is due to
Borges--Ou--Pasquariello \cite[Corollary~1.18]{BOP}, using the
graph-building framework of Borges et al.\
\cite[Theorem~B]{BFOPR}.  They proved that
\[
 \dim_{\mathrm H}(E)>
 d-1+\frac{d}{2d+1}
\]
ensures the existence of a point \(x\in E\) which pins a
positive-measure family of \((d-1)\)-simplices.

Thus Theorem~\ref{thm:euclidean-codim-one} lowers the previous pinned
codimension-one threshold from
\(d-1+\frac{d}{2d+1}\) to \(d-1\), and strengthens the conclusion from
one pin and positive measure to a full-\(\mu\)-measure set of pins and
absolute continuity. 

While the finite field analog (Theorem \ref{thm:finite-main}) is sharp, we do not have any examples to show that the dimensional threshold of $d-1$ in Theorem \ref{thm:euclidean-codim-one} is optimal. It follows from \cite[Theorem 1.8]{GILP} that the dimensional threshold cannot be smaller than $\min\{d-2, \frac{d}{2}\}$. If we assume further that $E$ is a Salem set, then the next result presents an improvement for $2\le k\le d-2$.

For a compact set \(E\subset\R^d\), its Fourier dimension is the
supremum of the exponents \(\beta\in[0,d]\) for which \(E\) supports a
probability measure \(\mu\) satisfying
\[
        \lvert\wh\mu(\xi)\rvert
        \lesssim
        (1+\lvert\xi\rvert)^{-\beta/2}.
\]
The set \(E\) is called a Salem set if its Fourier dimension is equal
to its Hausdorff dimension.  Equivalently, for every
\(0<\sigma<\dim_{\mathrm H}(E)\), the set \(E\) supports a probability
measure \(\mu\) satisfying
\[
        \lvert\wh\mu(\xi)\rvert
        \lesssim_\sigma
        (1+\lvert\xi\rvert)^{-\sigma/2}.
\]

Our second Euclidean result gives a pinned threshold which is
independent of the ambient dimension.

\begin{theorem}\label{thm:pinned-salem}
Let \(d\geq3\), let \(2\leq k\leq d-1\), and let
\(E\subset\R^d\) be a compact Salem set.  If
\[
        \dim_{\mathrm H}(E)>k,
\]
then there exist a probability measure \(\mu\), supported on \(E\),
and a Borel set \(E_\mu\subset\supp\mu\), with \(\mu(E_\mu)=1\), such
that, for every \(x\in E_\mu\),
\[
        (\Phi_{k,x})_\#\mu^k\ll\cL^{t_k}.
\]
Consequently,
\[
\cL^{t_k}\bigl(\Delta_{k,x}^{\mathrm{nd}}(E)\bigr)>0
\qquad (x\in E_\mu).
\]
\end{theorem}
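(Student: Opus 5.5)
We will prove that the pinned configuration measure $\nu_x=(\Phi_{k,x})_\#\mu^k$ has an $L^2$ density for $\mu$-a.e.\ $x$. To do this we bound
\[
\int\!\!\int |\wh{\nu_x}(\tau)|^2\,d\tau\,d\mu(x)<\infty,
\]
or rather a smoothed version at scale $\delta$, uniformly in $\delta$. Fix $k<\sigma<\dim_{\mathrm H}(E)$ and choose $\mu$ on $E$ with $|\wh\mu(\xi)|\lesssim(1+|\xi|)^{-\sigma/2}$. We may assume $\mu$ is also $\sigma$-Frostman; this is harmless, since Fourier decay of order $\sigma/2$ gives finite $s$-energy for all $s<\sigma$, and we can then shrink $\sigma$ slightly.

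\textbf{Step 1: Fourier form of the pinned measure.} For $\tau=(\tau_{ab})_{0\le a<b\le k}$, let $z_0=x$. Then
\[
\wh{\nu_x}(\tau)=\int e^{-2\pi i\sum_{a<b}\tau_{ab}|z_a-z_b|^2}\,d\mu^k(z_1,\dots,z_k).
\]
The phase is a quadratic form in $(z_1,\dots,z_k)\in\R^{dk}$. Its matrix is $M(\tau)\otimes I_d$, where $M(\tau)$ is the $k\times k$ "Laplacian" matrix obtained by deleting the pinned row and column.

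We argue by induction on $k$, peeling off one vertex at a time. Integrating in $z_k$ first gives the factor
\[
\int e^{-2\pi i(\tau_{kk}'|z_k|^2-2z_k\cdot w)}\,d\mu(z_k),
\]
where $w$ is a linear combination of $x,z_1,\dots,z_{k-1}$ with coefficients $\tau_{ak}$ and $\tau_{kk}'=\sum_a\tau_{ak}$. This is the Fourier transform of $\mu$ against a chirp. Writing $\mu=\wh\mu*\dots$, that is, expanding $\mu$ in frequency via Plancherel, converts it into $\int\wh\mu(\eta)\,\wh{\gamma_{\tau'}}(\eta-2w)\,d\eta$. Here $\gamma_{\tau'}$ is a Gaussian chirp, with $|\wh{\gamma_{\tau'}}|\sim|\tau'|^{-d/2}$ on a ball of radius $\sim|\tau'|$. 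Fourier decay $\sigma/2$ of $\wh\mu$ then yields a gain of $|\tau_{\cdot k}|^{-\sigma/2}$ for the new $k$ coordinates $\tau_{0k},\dots,\tau_{k-1,k}$.

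\textbf{Step 2: Counting dimensions.} Square and integrate over $\tau\in\R^{t_k}$ with $|\tau|\sim 2^j$. For each new vertex $z_k$ we gain a factor $2^{-j\sigma}$ from $|\wh\mu|^2$ after Cauchy--Schwarz, and we pay $2^{jk}$ from the $k$ new frequency coordinates. Summing over $j$ therefore requires $\sigma>k$; this is exactly where the hypothesis $\dim_{\mathrm H}E>k$ enters.

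The remaining integration in $z_1,\dots,z_{k-1}$ is handled by the inductive hypothesis. More precisely, we prove a mixed estimate of the form
\[
\int \sup_{w}\Big|\int e^{-2\pi i\,\phi_\tau}\,d\mu^{j}\Big|^2\,d\tau\,d\mu(x)\lesssim 1,
\]
which holds for all $j\le k$ with uniform constants. The averaging in the pin $x$ is used only through $\int|\wh\mu(\xi-\eta)|\,d\mu(x)$-type bounds, which are controlled by the Frostman condition.

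\textbf{Step 3: Conclusion.} Fubini gives $\wh{\nu_x}\in L^2$ for $\mu$-a.e.\ $x$, so $\nu_x\ll\cL^{t_k}$ on a full-measure Borel set $E_\mu$. Since $\mu^k$-almost every $k$-tuple is affinely independent together with $x$ (the set of affinely dependent tuples lies in lower-dimensional affine subspaces, which are $\mu$-null because $\sigma>k\geq$ their dimension, using Frostman), the support of $\nu_x$ has positive measure within the nondegenerate part. Composing with the square root map then yields $\cL^{t_k}(\Delta^{\mathrm{nd}}_{k,x}(E))>0$.

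\textbf{Main obstacle.} The hardest part is Step 1–2 in the degenerate directions of $\tau$, namely where $M(\tau)$ is nearly singular. There the chirp in $z_k$ has a small effective frequency $\tau_{kk}'$ even though $|\tau|$ is large, so no decay is produced. To handle this, we first change variables $\tau\mapsto M(\tau)$, a linear bijection onto the symmetric $k\times k$ matrices. We then diagonalize by orthogonal conjugation, so that each eigen-direction contributes its own decay $|\lambda_i|^{-\sigma/2}$ per unit of the $d$-dimensional variable. The price is a Jacobian $\prod_{i<l}|\lambda_i-\lambda_l|$, and we would check that it is integrable against $\prod|\lambda_i|^{-\sigma}$ precisely when $\sigma>k$.
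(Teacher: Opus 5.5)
There is a genuine gap: the decay you need in Steps 1--2 never materializes, and the intermediate estimate you propose is false.

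After integrating out $z_k$ you are left with $F(\tau',w)=\int e^{-2\pi i(\tau'|z|^2-2z\cdot w)}\,d\mu(z)$. Up to a unimodular factor, this is the Fourier transform at $\tau'$ of the squared-distance measure pinned at $c=w/\tau'$. Its size is governed by how $\mu$ sits on spheres centred at $c$, not by the decay of $\wh\mu$. Pairing $\wh\mu$ with the chirp's transform (modulus $\sim|\tau'|^{-d/2}$ on a ball of radius $\sim|\tau'|$) via the triangle inequality gives only $|\tau'|^{(d-\sigma)/2}$, not $|\tau'|^{-\sigma/2}$.

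Here is a concrete counterexample. Take $d\ge4$, $k=2$, and $\mu$ the normalized surface measure on $\Sph^{d-1}$. It satisfies $|\wh\mu(\xi)|\lesssim(1+|\xi|)^{-(d-1)/2}$, so it is an admissible Salem measure for any $2<\sigma<d-1$. Yet $|F(\tau',0)|=1$ for every $\tau'$, and for any pin $x$ the choice $w=-\tau x$ makes the one-vertex integral have modulus $1$. So any bound with $\sup_w$ inside the $\tau$-integral already diverges at $j=1$.

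There is also a structural problem. $F$ depends on the $k$ new frequencies $\tau_{0k},\dots,\tau_{k-1,k}$ only through $\tau'=\sum_a\tau_{ak}$ and $w=\sum_a\tau_{ak}z_a$. Along the other $k-1$ directions only the linear term moves, so once you take $\sup_w$ nothing is left to produce the claimed factor $|\tau_{\cdot k}|^{-\sigma}$.

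The diagonalization remedy fails as well. Conjugating $M(\tau)$ by an orthogonal $k\times k$ matrix $O$ replaces the vertices by linear combinations of them. Since $(O\otimes I_d)_\#\mu^k$ is not a product measure, the integral does not split into independent chirps with parameters $\lambda_i$.

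The missing idea is to get decay by \emph{averaging over the anchor vertices}, using that $|z-y|^2-|z'-y|^2=2|u|(\bar z\cdot\omega-y\cdot\omega)$ is affine in $y$. Averaging over the pin alone, as you propose, turns $\int|\wh{\nu_x}(\tau)|^2\,d\mu(x)$ into an expression involving $\wh\mu\bigl(2\sum_b\tau_{0b}(z_b-z_b')\bigr)$. That decays only in the $k$ coordinates $\tau_{0b}$. The remaining $t_{k-1}$ directions need exactly the averaging over $z_1,\dots,z_{k-1}$ that your supremum discards.

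The paper's route is as follows.
\begin{enumerate}
\item It uses $\sigma>2$ only to get $\sup_\omega\int|\wh\mu(r\omega)|\,dr<\infty$, i.e.\ uniformly bounded densities for all projections $(\pi_\omega)_\#\mu$.
\item In the expanded $L^2$ norm of the star measure $(S_Y)_\#\mu$, each anchor average is at most $P/(2|z-z'|)$.
\item The product over $j$ anchors is integrable because $I_j(\mu)<\infty$ for $j\le k<\sigma$.
\item It then obtains only absolute continuity, not $L^2$, of the complete pinned configuration, via a fixed-pin Fubini/disintegration induction.
\end{enumerate}
Your target $\int\|\nu_x\|_2^2\,d\mu(x)<\infty$ is strictly stronger than anything the paper asserts.

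Finally, you cannot assume $\mu$ is $\sigma$-Frostman. Decay of order $\sigma/2$ gives $I_s(\mu)<\infty$ for $s<\sigma$, but only $\mu(B(x,r))\lesssim r^{\sigma/2}$. Restricting $\mu$ to enforce a ball condition need not preserve the Fourier decay, which is the one-measure issue the paper stresses. You do not actually need the Frostman condition. Finite $I_s$ with $s>k-1$ already makes affine $(k-1)$-planes $\mu$-null, and once absolute continuity is known, the degenerate locus is Lebesgue-null in configuration space.
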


By coordinate marginalization, the same measure $\mu$ and the same set
$E_\mu$ also satisfy
\[
(\Phi_{j,x})_\#\mu^j\ll\cL^{t_j}
\qquad
(1\le j\le k,\ x\in E_\mu).
\]
When \(k=d-1\), this conclusion already follows from
Theorem~\ref{thm:euclidean-codim-one} for arbitrary compact sets.
Thus the Sale specific improvement concerns the intermediate range
\(2\leq k\leq d-2\). The theorem does not cover the case $k=1$, and the current record for this case can be found in\cite[Theorem~2.4(1)]{FraserPham} due to Fraser and
Pham.

\subsection{Main ideas}
\label{sec:intro-main-ideas}

Bennett, Hart, Iosevich, Pakianathan, and Rudnev \cite{BHIPR}
used the group-action framework to reduce the \(L^2\) problem for
\(k\)-simplex multiplicities to an \(L^2\) distance-energy estimate.
The relevant scalar quantity is
\[
 \mathcal E_Q(E)
 :=
 \bigl|\{(x,y,z,w)\in E^4:
 Q(x-y)=Q(z-w)\}\bigr|.
\]
By Cauchy--Schwarz,
\[
 \mathcal E_Q(E)\geq \frac{|E|^4}{q},
\]
while the well-known estimate is
\[
 \mathcal E_Q(E)
 \leq
 \frac{|E|^4}{q}+Cq^d|E|^2,
\]
for some large constant $C$.

Thus, to improve the bound in \cite{BHIPR} within the same framework, it is
essential to retain the exact coefficient \(1\) in front of
\(|E|^4/q\) and improve the second term in the relevant large-set
range.  Indeed, a larger coefficient in the first term leaves a
main-sized contribution after centering.  Such an improvement appears
very difficult: even the conjecturally sharp spherical restriction
estimate in even dimensions improves the corresponding \(L^2\)
estimate only when
\[
 |E|<q^{d/2+1},
\]
and hence gives no gain at the codimension-one scale
\(|E|\sim q^{d-1}\) for even \(d\geq4\).  This difficulty motivates
our base--apex decomposition, in which the method in \cite{BHIPR} is applied
only to the base simplices and the apex energy is controlled separately
by point--hyperplane variance.

More precisely, the finite field argument adapts the extraction principle of the authors in \cite[Theorem~1.1]{PSX}, used there for distances and
triangles, to nondegenerate \(k\)-simplices in the range
\(2\leq k\leq d-1\).  Their theorem extracts
from a set in an even-dimensional quadratic space a large set in a
nondegenerate quadratic plane by quotienting by a large totally
isotropic subspace, while preserving every pairwise quadratic value
on the extracted set.  This quotient retains enough rank to transfer
planar triangle estimates, but a direct reduction to a plane cannot
produce a nondegenerate \(k\)-simplex when \(k\geq3\), because its edge Gram
matrix would have rank at most two.  We therefore quotient one
isotropic line at a time.  The main difficulty is to prevent too much
of \(E\) from collapsing in the quotient.  The estimate for isotropic
differences shows that one can choose an isotropic line \(\ell\) and
a coset of \(\ell^\perp\) whose image in the nondegenerate quotient
\(\ell^\perp/\ell\) remains large; the quotient lowers the ambient
dimension by two and preserves the entire edge Gram matrix.  Iteration
ends in dimension \(k\) when \(d-k\) is even and in dimension \(k+1\)
when \(d-k\) is odd.  The even-codimension case then uses the
full-dimensional estimate of Bennett et al.\ \cite{BHIPR}, whereas the
odd-codimension case uses the new codimension-one estimate.  To prove
that estimate, we write a \((d-1)\)-simplex as a
\((d-2)\)-simplex base and an apex.  An orthogonal-group moment bound
controls the second moment of the base-class multiplicities.  If two
distinct apices have the same distance vector, an isotropic difference
forces degeneracy; otherwise the base lies in their nondegenerate
perpendicular-bisector hyperplane, and the point--hyperplane variance
identity controls the resulting second moment.  Cauchy--Schwarz and
H\"older then give a
positive proportion of nondegenerate classes.  These two terminal
cases produce the two exponents in Theorem~\ref{thm:finite-main}.  The
odd-codimension exponent is sharp, while the gap in even codimension
is inherited from the full-dimensional terminal estimate. 

The passage to the Euclidean setting is conceptual rather than a
formal transfer between fields.  We retain the finite field
base--apex architecture: control the previously constructed face,
control the distances from a new vertex to that face, and then combine
the two pieces.  Multiplicity functions and finite sums are replaced
by pushforward measures and their densities; second-moment counts are
replaced by \(L^2\) estimates; and Cauchy--Schwarz assembly is replaced
by Fubini and measure disintegration.  The same perpendicular-bisector
linearization remains visible: equality of two squared distances is a
linear condition in the anchor, and the resulting collision kernel is
estimated through projections and Riesz energies.  This translation
also allows one pin to be kept fixed throughout the construction,
leading naturally to pinned absolute continuity rather than only an
unpinned positive-measure conclusion.

For Theorem~\ref{thm:euclidean-codim-one}, choose an \(s\)-Frostman
measure \(\mu\) with \(s>d-1\).  The affine
Blaschke--Petkantschin formula, together with an inverse-volume
estimate, shows that the distribution of the affine span of a
\(\mu^{d-1}\)-typical \((d-2)\)-simplex base is absolutely continuous
with respect to invariant measure on \(A(d,d-2)\).  For
\(L=a+V\), the auxiliary map
\[
        C_L(x)=\bigl(P_Vx,\lvert P_{V^\perp}x-a\rvert^2\bigr)
\]
records the tangential position of the apex and its squared normal
distance from \(L\).  Averaging the \(L^2\) norms of
\((C_L)_\#\mu\) over \(L\) produces the kernel
\(\lvert x-x'\rvert^{-(d-1)}\): the Grassmannian average contributes
\(\lvert x-x'\rvert^{-(d-2)}\), and integration in the offset
contributes one further power.  This is controlled by
\(I_{d-1}(\mu)\).  A global change of variables then converts the
auxiliary map into the full squared-distance star from the apex to
the base, giving an \(L^2\) star measure for almost every full-rank
anchor tuple.  Coordinate marginals provide the corresponding star
statement at every lower rank.  Fubini selects a common
full-\(\mu\)-measure set of pins, and a fixed-pin induction adds the
remaining vertices one at a time while retaining all previously
constructed edge data.  The complete pinned squared-distance measure
is therefore absolutely continuous; the ordinary-distance conclusion
and the exclusion of degenerate simplices follow from the
coordinatewise squaring map and the Gram-determinant polynomial.

The proof of Theorem~\ref{thm:pinned-salem} bypasses the affine-span
construction.  Choose
\(k<\sigma<\dim_{\mathrm H}(E)\) and a Salem measure \(\mu\) with
\[
        \lvert\wh\mu(\xi)\rvert
        \lesssim
        (1+\lvert\xi\rvert)^{-\sigma/2}.
\]
Since \(\sigma>2\), the restrictions of \(\wh\mu\) to all lines
through the origin are uniformly integrable, so every
one-dimensional projection of \(\mu\) has a uniformly bounded
density.  Since \(\sigma>k\), the same measure also has finite
\(I_j(\mu)\) for \(1\leq j\leq k\).  For a \(j\)-tuple of anchors, we
expand the averaged \(L^2\) norm of the squared-distance star measure.
Each anchor contributes, through the corresponding one-dimensional
projection, a factor \(O(\lvert z-z'\rvert^{-1})\); the product of the
\(j\) factors is therefore controlled by the energy kernel
\(\lvert z-z'\rvert^{-j}\).  Thus the star measure has an \(L^2\)
density for almost every anchor tuple.  Fubini and a fixed-pin
disintegration then build the complete pinned \(k\)-simplex
configuration measure.  The Salem hypothesis is used precisely to
place the uniform projection bound and the finite \(k\)-energy
condition on one and the same measure.

The gain from the Salem hypothesis occurs for intermediate-dimensional
simplices.  Theorem~\ref{thm:euclidean-codim-one}, followed by
coordinate marginalization, gives pinned absolute continuity at every
rank \(1\leq j\leq d-1\), but only under the common condition
\(\dim_{\mathrm H}(E)>d-1\).  Running its affine-span construction
directly at rank \(k\) does not lower this threshold: the cylindrical
\(L^2\) estimate requires only \(I_k(\mu)<\infty\), but absolute
continuity of the affine-span measure still requires the
inverse-volume condition \(s>d-1\).  The Salem argument averages
directly over the anchor tuples and avoids this bottleneck, yielding
the ambient-dimension-independent threshold
\(\dim_{\mathrm H}(E)>k\) for \(2\leq k\leq d-2\).  Both theorems give
absolute continuity of the complete squared-distance and
ordinary-distance laws for a full-\(\mu\)-measure set of pins; the
\(L^2\) conclusion is used for the star measures and is not asserted
for the final complete configuration density.  When \(k=d-1\), there
is no Salem-specific improvement, since
Theorem~\ref{thm:euclidean-codim-one} gives the same conclusion for
arbitrary compact sets.

\part{The finite field setting}
\label{part:finite-field}

Throughout Part~\ref{part:finite-field}, \(q\) is an odd prime power,  \(V=\Fq^d\), and \(Q\) is a fixed nondegenerate quadratic form on \(V\).

\section{Finite field preliminaries}
\label{sec:finite-preliminaries}

\subsection{Quadratic spaces and simplex classes}

We use the standard facts that every nondegenerate quadratic space of dimension at least three over a finite field of odd order is isotropic and that Witt's extension theorem holds in odd characteristic (see \cite[Section 7]{BHIPR} for
example).  When only the order of the group is relevant, \(\Orth_j(\Fq)\) denotes the orthogonal group of either nondegenerate \(j\)-dimensional quadratic form. Uniformly in the isometry class,
\begin{equation}\label{eq:orthogonal-size}
|\Orth_j(\Fq)|\approx_j q^{j(j-1)/2}.
\end{equation}

The following elementary lemma shows that degenerate simplex classes are negligible in positive-proportion estimates.

\begin{lemma} \label{lem:degenerate-classes}
For fixed \(2\leq k\leq d\), the number of ordered degenerate \(k\)-simplex congruence classes in a nondegenerate quadratic space $(V,Q)$ is $O_k(q^{t_k-1})$.
\end{lemma}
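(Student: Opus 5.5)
The plan is to attach to each ordered simplex its edge Gram matrix and then count degenerate classes by counting singular symmetric matrices, with a small correction for simplices whose edge span is a degenerate subspace. Let \(\mathbf x=(x_0,\dots,x_k)\) be an ordered simplex with edge vectors \(u_i=x_i-x_0\), and let \(G(\mathbf x)\) be its edge Gram matrix. This is a \(k\times k\) symmetric matrix. It is invariant under translations and under \(\Orth(Q)\), so \(\mathbf x\mapsto G(\mathbf x)\) is well defined on congruence classes. For degenerate simplices, however, this map need not be injective, because Witt's theorem only guarantees injectivity when the span is nondegenerate. So the count has two parts. First we bound the number of possible matrices. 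Then we bound the number of classes lying over each matrix.

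\emph{Step 1: counting singular matrices.} A degenerate class has \(\det G=0\). The polynomial \(\det\) on the space \(\Fq^{t_k}\) of symmetric \(k\times k\) matrices is nonzero, and its degree is at most \(k\). By the Schwartz--Zippel bound it therefore has at most \(k\,q^{t_k-1}\) zeros.

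\emph{Step 2: classes per Gram matrix.} Fix a singular symmetric matrix \(G\). We want to show that the ordered simplices with Gram matrix \(G\) fall into \(O_k(1)\) congruence classes. Let \(W=\Span(u_1,\dots,u_k)\), of dimension \(r\le k\), and let \(R=W\cap W^\perp\) be its radical, of dimension \(s\). The relations
\[
\sum_i c_iu_i=0
\]
form a subspace of the kernel of \(G\). Choosing such a subspace amounts to choosing a subspace of \(\ker G\subset\Fq^k\), and there are \(O_k(1)\) of these since we only count subspaces of a fixed \(k\)-dimensional space. Together with \(G\), this subspace determines the abstract quadratic space \(W\) equipped with the configuration \((u_i)\) up to isometry. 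By Witt's extension theorem in odd characteristic, which holds for arbitrary (possibly degenerate) subspaces, any isometry between two such subspaces \(W,W'\) of \(V\) that carries \(u_i\) to \(u_i'\) extends to an element of \(\Orth(Q)\). Hence each admissible choice gives at most one class, up to the fact that the embedding must exist in \(V\) at all. This yields at most \(O_k(1)\) classes per \(G\).

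Combining the two steps gives \(O_k(q^{t_k-1})\) degenerate classes.

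The main obstacle is Step 2. The paper stated Witt's extension theorem only in the form needed for nondegenerate simplices, and I would justify the general version by citing \cite[Section 7]{BHIPR}. That reference proves Witt's theorem for arbitrary subspaces in odd characteristic. If that citation is not available, my fallback would be to decompose \(W=W_0\oplus R\) with \(W_0\) nondegenerate and argue the extension directly.
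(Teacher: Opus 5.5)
Your reduction is sound up to the last count. A class is determined by the pair $(G,K)$, where $K=\{c\in\Fq^k:\sum_i c_iu_i=0\}\subseteq\ker G$. This works because $(G,K)$ fixes the abstract space $\Fq^k/K$ with its induced form, and Witt's theorem, which holds for degenerate subspaces and which the paper uses too, extends the resulting isometry.

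The gap is the claim that $\ker G$ has only $O_k(1)$ subspaces. Over $\Fq$, a $c$-dimensional space has $\asymp_c q^{\lfloor c^2/4\rfloor}$ subspaces; it already has $(q^c-1)/(q-1)$ lines. So the number is not bounded independently of $q$, and your per-matrix claim is actually false. Take an isotropic $e\neq0$ (one exists when $d\ge3$) and $a\in\Fq^k\setminus\{0\}$, and consider the simplices $x_0=0$, $x_i=a_ie$. All of them have $G=0$. Two of them are congruent only when their vectors $a$ are proportional, because any $\theta$ carrying one to the other must send $e$ to a nonzero multiple of itself. So the single matrix $G=0$ carries at least $(q^k-1)/(q-1)\gg q^{k-1}$ degenerate classes. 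Multiplying ``$kq^{t_k-1}$ singular matrices'' by ``$O_k(1)$ classes each'' therefore does not prove the lemma.

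The missing idea is that singular matrices of large corank are correspondingly rare, and you must trade this off against the number of possible kernels $K$. Stratify by $c=\dim\ker G\geq1$:
\begin{itemize}
\item A symmetric matrix of corank $c$ is determined by its kernel and a nondegenerate form on the $(k-c)$-dimensional quotient, so there are $O_k(q^{c(k-c)+t_{k-c}})=O_k(q^{t_k-t_c})$ such matrices.
\item Each of them admits $O_k(q^{\lfloor c^2/4\rfloor})$ choices of $K$.
\item Since $t_c-\lfloor c^2/4\rfloor\geq1$ for $c\geq1$, the total is $O_k(q^{t_k-1})$.
\end{itemize}

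The paper organizes the same count by $r=\rank L$ instead. For $r<k$ it counts pairs consisting of $\ker L$ and an arbitrary induced form on the $r$-dimensional quotient, which gives $O_k(q^{rk-r(r-1)/2})=O_k(q^{t_k-1})$. Only in the injective case $r=k$ does it use your Step~1 determinant-hypersurface count. There $K=0$, and your one-class-per-$G$ statement is genuinely true.
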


\begin{proof}
Let \(\mathbf x=(x_0,x_1,\ldots,x_k)\) be an ordered \(k\)-simplex, and let \(L:\Fq^k\to V\) be the linear map determined by its edge vectors:
\[
L(e_i)=x_i-x_0,\qquad 1\leq i\leq k,
\]
where $\{e_1,\cdots,e_k\}$ is the standard basis of $\Fq^k$. We separate two kinds of degeneracy.

First, suppose that \(\rank L=r< k\). The kernel is a \((k-r)\)-dimensional subspace of \(\Fq^k\), and there are \(O_k(q^{r(k-r)})\) possible kernels. Once the kernel is fixed, the form induced on the \(r\)-dimensional quotient has at most \(q^{r(r+1)/2}\) possibilities. Two edge maps with the same kernel and the same induced form define an isometry between their images, which extends to \(V\) by Witt's theorem. The number of rank-\(r\) classes is therefore at most
\[
 O_k\!\left(q^{r(k-r)+r(r+1)/2}\right)
 =
 O_k\!\left(q^{rk-r(r-1)/2}\right) = O_k\!\left(q^{t_k-1}\right),
\]
since $r\leq k-1$.

Second, if \(\rank L=k\) but the edge Gram matrix is singular, its entries lie on the determinant hypersurface in the \(t_k\)-dimensional space of symmetric matrices. This hypersurface has \(O_k(q^{t_k-1})\) points. Witt's theorem again shows that each Gram matrix gives at most one congruence class.

Combining the two cases proves the lemma.
\end{proof}

For \(2\leq k\leq d-1\), every nondegenerate \(k\)-dimensional quadratic space embeds isometrically into \(V\): by the classification of quadratic forms over finite fields, one may choose a nondegenerate complement of dimension \(d-k\) with the required discriminant. Witt's extension theorem therefore identifies the nondegenerate ordered classes with the nonsingular symmetric \(k\times k\) Gram matrices. Consequently,
\[
\bigl|\cT^{d,\mathrm{nd}}_{k,Q}(V)\bigr|
=
q^{t_k}+O_k(q^{t_k-1})
\approx_k q^{t_k}.
\]
Thus, degenerate classes do not affect positive-proportion estimates.

\subsection{Point--hyperplane variance}

The sum in the next lemma is over distinct affine hyperplanes, with scalar multiples of a defining equation representing the same hyperplane.

\begin{lemma} 
\label{lem:hyperplane-variance}
For \(E\subset\Fq^d\),
\begin{equation}\label{eq:hyperplane-variance}
\sum_H\left(|E\cap H|-\frac{|E|}{q}\right)^2  = q^{d-1}|E|-\frac{|E|^2}{q},
\end{equation}
where the sum is over all affine hyperplanes in $\Fq^d$. 
\end{lemma}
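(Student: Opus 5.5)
We prove \eqref{eq:hyperplane-variance} by double counting: expand the square and compute the first and second moments of the incidence counts \(|E\cap H|\) over the set \(\mathcal H\) of all affine hyperplanes. First we record the needed counts. The number of affine hyperplanes is
\[
|\mathcal H|=q\cdot\frac{q^d-1}{q-1},
\]
since there are \((q^d-1)/(q-1)\) directions and \(q\) parallel translates of each. Each point lies on exactly \((q^d-1)/(q-1)\) hyperplanes, one per direction. Two distinct points lie on exactly \((q^{d-1}-1)/(q-1)\) common hyperplanes: these are the hyperplanes through the first point whose direction contains the nonzero difference vector, i.e.\ the hyperplanes of the \((d-1)\)-dimensional quotient.

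Next we compute the two moments. Counting incidences gives
\[
\sum_H |E\cap H| = |E|\,\frac{q^d-1}{q-1}.
\]
For the second moment we split pairs \((x,y)\in E^2\) into the diagonal and off-diagonal parts:
\[
\sum_H |E\cap H|^2 = |E|\,\frac{q^d-1}{q-1}+(|E|^2-|E|)\,\frac{q^{d-1}-1}{q-1}.
\]

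Finally we expand
\[
\sum_H\Bigl(|E\cap H|-\tfrac{|E|}{q}\Bigr)^2
=\sum_H|E\cap H|^2-\tfrac{2|E|}{q}\sum_H|E\cap H|+\tfrac{|E|^2}{q^2}|\mathcal H|.
\]
Substituting the formulas above, the \(|E|^2\) coefficient is
\[
\frac{1}{q-1}\Bigl[(q^{d-1}-1)-\tfrac{2}{q}(q^d-1)+\tfrac{1}{q}(q^d-1)\Bigr]
=\frac{1}{q-1}\Bigl[q^{d-1}-1-q^{d-1}+\tfrac1q\Bigr]
=-\frac1q.
\]
The \(|E|\) coefficient is
\[
\frac{(q^d-1)-(q^{d-1}-1)}{q-1}=q^{d-1}.
\]
This yields exactly \(q^{d-1}|E|-|E|^2/q\).

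There is no real obstacle here. The only point needing care is getting the number of hyperplanes through two distinct points right, using the convention stated before the lemma that hyperplanes are counted without multiplicity from scalar multiples. After that, the identity reduces to the algebraic simplification above.
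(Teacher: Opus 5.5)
Your proof is correct and is the paper's argument: the same three incidence counts ($q\frac{q^d-1}{q-1}$ hyperplanes, $\frac{q^d-1}{q-1}$ through each point, $\frac{q^{d-1}-1}{q-1}$ through each pair of distinct points), then expanding the square. Your coefficient computations match the paper's identities $M_d-M_{d-1}=q^{d-1}$ and $M_{d-1}-M_d/q=-1/q$.
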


\begin{proof}
Put $M_j=\frac{q^j-1}{q-1}$ for $(j=d-1,d)$. There are \(qM_d\) affine hyperplanes, every point belongs to \(M_d\)
of them, and every pair of distinct points belongs to \(M_{d-1}\) of
them. Hence
\[
        \sum_H|E\cap H|=|E|M_d
\]
and
\[
        \sum_H|E\cap H|^2
        =
        |E|M_d+|E|(|E|-1)M_{d-1}.
\]
Expanding the left-hand side of \eqref{eq:hyperplane-variance}, and
using
\[
        M_d-M_{d-1}=q^{d-1},
        \qquad
        M_{d-1}-\frac{M_d}{q}=-\frac1q,
\]
proves the identity.
\end{proof}

\subsection{A second-moment estimate for base simplices}

Let \(\chi\) be the canonical nontrivial additive character of \(\Fq^d\). Choose coordinates on \(V\), and use the unnormalized Fourier transform
\[
\wh f(\xi) = \sum_{x\in V}f(x)\chi(-x\cdot\xi),\qquad (\xi\in \Fq^d),
\]
where $x\cdot \xi$ denotes the standard dot product of $x$ and $\xi$. 
Write \(Q(x)=x^TAx\) and $Q^\vee(\xi)=\xi^TA^{-1}\xi$. For \(\theta\in\Orth(Q)\) and \(z\in V\), set
\[
 \nu_\theta(z)
 =
 |\{(u,v)\in E^2:u-\theta v=z\}|.
\]

\begin{lemma} \label{lem:BHIPR-motion-moment}
For every integer $r\geq 2$,
\begin{equation}\label{eq:BHIPR-motion-moment}
\sum_{\theta\in\Orth(Q)}\sum_{z\in V}\nu_\theta(z)^r
\ll_{d,r}
q^{\frac{d^2-(2r-1)d}{2}}|E|^{2r}
+
q^{\frac{d^2-d+2}{2}}|E|^r.
\end{equation}
\end{lemma}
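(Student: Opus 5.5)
Our plan is to compute $\sum_\theta\sum_z\nu_\theta(z)^r$ by Fourier analysis on $V$: average over $\theta$, then bound the resulting sum using the Fourier transform of spheres.

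\emph{Step 1: Fourier expansion of one $\theta$.} For fixed $\theta$, $\nu_\theta=1_E*(1_{\theta E})^{-}$, so
\[
\wh{\nu_\theta}(\xi)=\wh{1_E}(\xi)\,\overline{\wh{1_E}(\theta^{T}\xi)}.
\]
Since $\theta$ preserves $Q$, the transpose $\theta^T$ preserves $Q^\vee$.

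\emph{Step 2: the case $r=2$.} By Plancherel,
\[
\sum_z\nu_\theta(z)^2=q^{-d}\sum_\xi|\wh{1_E}(\xi)|^2|\wh{1_E}(\theta^T\xi)|^2 .
\]
Sum over $\theta$. The term $\xi=0$ gives $|\Orth(Q)|\,q^{-d}|E|^4\approx q^{(d^2-3d)/2}|E|^4$, which matches the first term of \eqref{eq:BHIPR-motion-moment} for $r=2$. For $\xi\ne0$, the stabilizer count shows that $\theta^T\xi$ is equidistributed on the level set $S_t=\{Q^\vee=t\}$ (with $t=Q^\vee(\xi)$), up to multiplicity $|\Orth(Q)|/|S_t|$. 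The nonzero contribution is therefore
\[
\sum_t\frac{|\Orth(Q)|}{q^d|S_t|}\Bigl(\sum_{\xi\in S_t}|\wh{1_E}(\xi)|^2\Bigr)^2 .
\]
We bound this using:
\begin{itemize}
\item the Plancherel bound $\sum_{\xi\in S_t}|\wh{1_E}(\xi)|^2\le q^d|E|$, applied to one factor;
\item the spherical estimate $\sum_{\xi\in S_t}|\wh{1_E}|^2\ll q^{d-1}|E|+q^{(d-1)/2}|E|^2$, from Kloosterman or Gauss sum bounds for $\wh{1_{S_t}}$, applied to the other factor.
\end{itemize}
The isotropic level $t=0$, and the nonzero isotropic $\xi$ in it, are treated separately; there $|S_0|\approx q^{d-1}$ and the stabilizer is still of size $\approx q^{\binom d2-(d-1)}$. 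Collecting terms, the $q^{(d-1)/2}|E|^2$ part is absorbed into the first term, and the rest gives $q^{(d^2-d+2)/2}|E|^2$.

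\emph{Step 3: induction on $r$.} We use $\nu_\theta(z)\le|E|$ together with the trivial identity $\sum_z\nu_\theta(z)=|E|^2$. A cleaner route is to observe that the second term, $q^{(d^2-d+2)/2}|E|^r$, comes from $|\Orth|\cdot q\cdot|E|^r$-type diagonal contributions. So we instead argue combinatorially.

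Write $\sum_\theta\sum_z\nu_\theta(z)^r$ as the number of tuples $(\theta,u_1,\dots,u_r,v_1,\dots,v_r)$ with $u_i-\theta v_i$ all equal. This is the number of $\theta$ mapping the difference configuration $(v_i-v_1)$ to $(u_i-u_1)$, i.e.\ $\theta(v_i-v_1)=u_i-u_1$. For each pair of $r$-tuples, this count is the size of a coset of a stabilizer, which is nonzero only if the Gram data agree. Our plan is:
\begin{itemize}
\item bound the count by $\sum_{\theta}\prod$ with the Fourier-expanded indicator;
\item use the $r=2$ estimate and Hölder, via $\nu_\theta^r\le\|\nu_\theta\|_\infty^{r-2}\nu_\theta^2$;
\item note that $\|\nu_\theta\|_\infty\le|E|$ is too weak, so instead split $z$ according to whether $\nu_\theta(z)$ exceeds $C\,q^{-d}|E|^2$.
\end{itemize}
The large part is handled by dyadic decomposition and the $L^2$ bound; the mean part yields $|\Orth|q^{d}(q^{-d}|E|^2)^r=q^{(d^2-(2r-1)d)/2}|E|^{2r}$.

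\emph{Main obstacle.} We expect the hardest step to be getting the exact exponent $q^{(d^2-d+2)/2}$ in the error term for general $r$. The Hölder interpolation above loses powers of $|E|$. The first thing we would try is an induction using the fixed-point structure: once $u_1,v_1$ and one more pair are fixed, $\theta$ is constrained to a coset of the stabilizer of a vector, of size $\approx|\Orth_{d-1}|$, and each further vertex is handled via the $r=2$ bound applied in the stabilizer subgroup.
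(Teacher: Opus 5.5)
Your Step 2 already produces what is needed: the nonzero-frequency part is the variance bound $\sum_\theta\sum_z(\nu_\theta(z)-a)^2\ll q^{(d^2-d+2)/2}|E|^2$, with $a=|E|^2/q^d$. The gap is in Step 3, which never reaches the claimed error term, and the obstacle you name there is misdiagnosed.

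The sup bound $\nu_\theta\le|E|$ is too weak only when you apply it to the uncentered $\nu_\theta^2$. There it multiplies the main term $q^{(d^2-3d)/2}|E|^4$ by $|E|^{r-2}$ instead of $a^{r-2}$. Applied to the centered fluctuation, it gives exactly the second term. Since $0\le\nu_\theta(z)\le|E|$ and $0\le a\le|E|$,
\[
\nu_\theta(z)^r\le 2^{r-1}\bigl(a^r+|\nu_\theta(z)-a|^r\bigr)\le 2^{r-1}\bigl(a^r+|E|^{r-2}|\nu_\theta(z)-a|^2\bigr).
\]
Sum over $(\theta,z)$. The $a^r$ part contributes $|\Orth(Q)|q^da^r\approx q^{(d^2-(2r-1)d)/2}|E|^{2r}$. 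The other part contributes $|E|^{r-2}\cdot q^{(d^2-d+2)/2}|E|^2$. This two-line argument is the paper's entire proof.

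Your split at $\nu_\theta>Ca$ is the same idea, and it works without any dyadic decomposition. On the large part, $\nu_\theta\le\tfrac{C}{C-1}(\nu_\theta-a)$, so that part is at most $|E|^{r-2}\bigl(\tfrac{C}{C-1}\bigr)^2\sum(\nu_\theta-a)^2$. This uses precisely the sup bound you dismissed, together with the variance. It must be the variance and not the raw second moment, whose main term would again be inflated by $|E|^{r-2}$. So there is no remaining obstacle, and the combinatorial/stabilizer induction you sketch is unnecessary; as written it is also too vague to check.

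Separately, Step 2 is heavier than necessary and slightly misstated. No Kloosterman or Gauss-sum spherical estimate is needed. Every nonzero orbit, including the isotropic one with its parabolic stabilizer, has stabilizer of size $\ll q^{(d-1)(d-2)/2}$. Combined with
\[
\Bigl(\sum_{\xi\in\Omega}|\wh{\1_E}(\xi)|^2\Bigr)^2\le q^d|E|\sum_{\xi\in\Omega}|\wh{\1_E}(\xi)|^2,
\]
this already gives the variance bound. Also, the $q^{(d-1)/2}|E|^2$ piece you produce is not absorbed into the first term in general. It contributes $q^{(d-1)^2/2}|E|^3$, which is the geometric mean of the two $r=2$ terms. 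It is therefore bounded by their sum, but dominated by the first term only when $|E|\ge q^{(d+1)/2}$.
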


\begin{proof}
First, we need the following estimate from \cite[(2.5)--(2.6)]{BHIPR}:
\begin{equation} \label{eq:BHIPR-motion-variance}
\sum_{\theta\in\Orth(Q)}\sum_{z\in V}
\left|\nu_\theta(z)-\frac{|E|^2}{q^d}\right|^2
\ll_d q^{\frac{d^2-d+2}{2}} |E|^2.
\end{equation}
Indeed, after removing the frequency zero, applying Plancherel's theorem, and decomposing $V\setminus\{0\}$ into $\theta^T$-orbits, the left-hand side of \eqref{eq:BHIPR-motion-variance} equals 
\begin{equation} \label{eq_deduce_further}
q^{-d}\sum\limits_{\xi\neq 0}|\wh{\1_E}(\xi)|^2\sum\limits_{\theta\in \Orth(Q)}|\wh{\1_E}(-\theta^T\xi)|^2 = q^{-d}\sum\limits_{\Omega}m_\Omega \Bigg(\sum\limits_{\xi\in \Omega}|\wh{\1_E}(\xi)|^2\Bigg)^2,
\end{equation}
where the outer sum is over all the orbits $\Omega$, and $m_\Omega:=|\text{Stab}_{\Orth(Q)}(\xi_\Omega)|$ for any chosen $\xi_\Omega\in \Omega$. If $Q^\vee(\xi_\Omega)\neq 0$, then $m_\Omega\approx_d |\Orth_{d-1}(\Fq)|$. If $Q^\vee(\xi_\Omega)= 0$ and $\xi_\Omega\neq 0$, then the stabilizer is a parabolic group of order $\approx_d q^{d-2}|\Orth_{d-2}(\Fq)|$. In both cases, one has $m_\Omega\ll_d q^{(d-1)(d-2)/2}$ by \eqref{eq:orthogonal-size}. Now the right-hand side of \eqref{eq_deduce_further} is 
\[
\ll_d q^{-d}\cdot q^{(d-1)(d-2)/2} \sum\limits_{\xi\neq 0}|\wh{\1_E}(\xi)|^2 \leq q^{\frac{d^2-d+2}{2}} |E|^2.
\]

Second, we derive the higher-moment estimate. Since \(0\leq\nu_\theta(z)\leq|E|\leq q^d\), for every fixed integer \(r\geq2\) and \(a=|E|^2/q^d\) one has
\[
x^r\leq 2^{r-1}(a^r+ |x-a|^r)\leq 2^{r-1}(a^r+|E|^{r-2}|x-a|^2)
\qquad (0\leq x \leq |E|).
\]
Summing this inequality with \(x=\nu_\theta(z)\) and using \eqref{eq:BHIPR-motion-variance} yields
\[
\sum_{\theta\in\Orth(Q)}\sum_{z\in V}\nu_\theta(z)^r
\ll_{d,r}
\frac{q^{d(d-1)/2}|E|^{2r}}{q^{d(r-1)}}
+
q^{\frac{d^2-d+2}{2}} |E|^r.
\]
The conclusion then follows.
\end{proof}

Let $E\subset V$ be given. For each nondegenerate ordered $(d-2)$-simplex class \(D\) represented in \(E^{d-1}\), denote
\[
\cY_D:=
 \{\mathbf y\in E^{d-1}:
   \mathbf y\text{ is nondegenerate and belongs to \(D\)}\},
 \qquad
 m_D:=|\cY_D|.
\]
Moreover, recall \(t_{d-2}=\binom{d-1}{2}\).

\begin{lemma} \label{lem:base-second-moment}
With the notation above,  \begin{equation}\label{eq:base-second-moment} \sum_Dm_D^2 \ll_d q^{-t_{d-2}}|E|^{2d-2} + q^{d(d-1)/2}|E|^{d-1}.
\end{equation}
In particular, if \(|E|\ge q^{d-1}\), then
\[
\sum_Dm_D^2 \ll_d  \frac{|E|^{2d-2}}{q^{t_{d-2}}}.
\]
\end{lemma}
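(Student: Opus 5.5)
Fix a nondegenerate base class \(D\) with representative \(\mathbf y^D=(y_0,\dots,y_{d-2})\). The plan is to count pairs in \(\cY_D\times\cY_D\) via the orthogonal-group action and then invoke Lemma~\ref{lem:BHIPR-motion-moment} with \(r=d-1\).

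If \(\mathbf y,\mathbf y'\in\cY_D\), then \(\mathbf y'\) is congruent to \(\mathbf y\): there are \(z\in V\) and \(\theta\in\Orth(Q)\) with \(y'_i=z+\theta y_i\) for every \(i\), so each pair \((y'_i,y_i)\) contributes to \(\nu_\theta(z)\). I will count the pairs \((\theta,z)\) realising a fixed congruence. Since \(\mathbf y\) is nondegenerate, the edge vectors \(u_i=y_i-y_0\) span a nondegenerate \((d-2)\)-dimensional subspace \(W\). The isometry \(W\to\theta W\) is then fixed on \(W\), and \(\theta\) is determined up to an element of \(\Orth(W^\perp)\), which is isomorphic to \(\Orth_2(\Fq)\). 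Moreover \(z=y'_0-\theta y_0\) is determined by \(\theta\). Hence each pair \((\mathbf y,\mathbf y')\in\cY_D^2\) arises from exactly \(|\Orth_2(\Fq)|\approx q\) pairs \((\theta,z)\); here we use Witt's theorem for existence and \eqref{eq:orthogonal-size} with \(j=2\) for the count.

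Conversely, fix \((\theta,z)\). The number of tuples \(((y'_i,y_i))_{i=0}^{d-2}\) with \(y'_i-\theta y_i=z\) for all \(i\) is exactly \(\nu_\theta(z)^{d-1}\). Summing over \(D\) and dropping the constraint that the tuples be nondegenerate and lie in a common class (every pair \(\mathbf y,\mathbf y'\) in the same \(D\) is counted), we get
\[
\sum_D m_D^2\cdot|\Orth_2(\Fq)|\le \sum_{\theta\in\Orth(Q)}\sum_{z\in V}\nu_\theta(z)^{d-1}.
\]
Lemma~\ref{lem:BHIPR-motion-moment} with \(r=d-1\ge2\) bounds the right-hand side by
\[
q^{\frac{d^2-(2d-3)d}{2}}|E|^{2d-2}+q^{\frac{d^2-d+2}{2}}|E|^{d-1}.
\]
Dividing by \(q\) gives the exponents \(\frac{-d^2+3d-2}{2}=-\binom{d-1}{2}=-t_{d-2}\) and \(\frac{d^2-d}{2}\), which is exactly \eqref{eq:base-second-moment}.

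For the particular case, \(q^{d(d-1)/2}|E|^{d-1}\le q^{-t_{d-2}}|E|^{2d-2}\) holds iff \(|E|^{d-1}\ge q^{d(d-1)/2+(d-1)(d-2)/2}=q^{(d-1)^2}\), that is, iff \(|E|\ge q^{d-1}\). This is the assumed range.

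The main obstacle is the exact stabilizer count. Over finite fields \(W^\perp\) may be anisotropic or hyperbolic, but either way it is a nondegenerate plane, so \(|\Orth(W^\perp)|\in\{2(q-1),2(q+1)\}\approx q\). This is the only point where nondegeneracy of the base is needed: it guarantees that \(W\) is nondegenerate and \(V=W\oplus W^\perp\).
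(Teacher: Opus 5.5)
Your proposal is correct and follows essentially the same route as the paper's proof. It uses Witt's theorem, observes that the admissible $\theta$ form a coset of the pointwise stabilizer of $W$, which is isomorphic to $\Orth(Q|_{W^\perp})$ and has size $\approx q$, notes that $z$ is then unique, applies Lemma~\ref{lem:BHIPR-motion-moment} with $r=d-1$, and divides by $q$. Your exponent bookkeeping and the reduction of the particular case to $|E|\ge q^{d-1}$ agree with the paper.
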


\begin{proof}
Note that $\sum_D m_D^2$ counts the number of pairs $(\mathbf y,\mathbf y')\in E^{d-1}\times E^{d-1}$ of congruent nondegenerate ordered $(d-2)$-simplex bases. The spaces $W:=\Span(y_2-y_1,\ldots,y_{d-1}-y_1)$ and $W':=\Span(y_2'-y_1',\ldots,y_{d-1}'-y_1')$ are nondegenerate. By Witt's extension theorem, the isometry between $W'$ and $W$, sending $y_i'-y_1'$ to $y_i-y_1$ $(2\leq i\leq d-1)$, extends to some $\theta\in \Orth(Q)$. The set of such extensions is a coset of $H_W:=\{\phi\in \Orth(Q):\, \phi|_W=id_W\}\simeq \Orth(Q|_{W\perp})$, which has size \(\approx q\). When $\theta$ is determined, there is a unique $z\in V$ such that $z+\theta y_1'= y_1$, and then $z+\theta y_i'= y_i$ $(2\leq i\leq d-1)$. Combining Lemma \ref{lem:BHIPR-motion-moment}, 
\[
q\sum_Dm_D^2 \ll_d \sum_{\theta\in\Orth(Q)}\sum_{z\in V}\nu_\theta(z)^{d-1} \ll_d q^{\frac{-d^2+3d}{2}}|E|^{2d-2}
+
q^{\frac{d^2-d+2}{2}}|E|^{d-1}.
\]
Dividing both sides by $q$ leads to the first estimate. Finally, since $q^{d(d-1)/2}|E|^{d-1} \le q^{-t_{d-2}}|E|^{2d-2}$ when \(|E|\ge q^{d-1}\), the second estimate holds.
\end{proof}

\subsection{Isotropic differences}

Denote
\[
\mathcal S_{Q,0}:=\{v\in V:Q(v)=0\},\qquad \nu_{E,Q}(0):=|\{(x,y)\in E^2:Q(x-y)=0\}|. 
\]

The following estimate is standard.  The explicit Fourier transform of the zero quadric is given in \cite[Lemma~4.1]{KohShenExtension}.  A short deduction is included because we will use both the zero-frequency term and the number of
projective isotropic directions.

\begin{lemma}
\label{lem:isotropic-differences}
For every \(E\subset V\),
\[
\nu_{E,Q}(0)   \le  \frac{|E|^2}{q}+q^{\lfloor d/2\rfloor}|E|.
\]
If \(d\ge3\), then \(V\) contains at least \(q^{d-2}\) isotropic
directions.
\end{lemma}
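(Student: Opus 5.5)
The plan is to express \(\nu_{E,Q}(0)\) through the Fourier transform of the zero quadric \(\mathcal S_{Q,0}\) and to bound the nonzero frequencies using the Gauss-sum evaluation of \(\wh{\1_{\mathcal S_{Q,0}}}\) quoted from \cite[Lemma~4.1]{KohShenExtension}. Since
\[
\nu_{E,Q}(0)=\sum_{x,y}\1_E(x)\1_E(y)\1_{\mathcal S_{Q,0}}(x-y),
\]
Fourier inversion gives
\[
\nu_{E,Q}(0)=q^{-d}\sum_{\xi}|\wh{\1_E}(\xi)|^2\,\wh{\1_{\mathcal S_{Q,0}}}(\xi),
\]
up to a sign convention in \(\xi\) that is harmless because \(\mathcal S_{Q,0}=-\mathcal S_{Q,0}\).

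The zero frequency contributes
\[
q^{-d}|E|^2|\mathcal S_{Q,0}|.
\]
I will write \(\1_{\mathcal S_{Q,0}}(v)=q^{-1}\sum_{t\in\Fq}\chi(tQ(v))\), with the one-dimensional character. The \(t=0\) term gives \(q^{d-1}\delta_0(\xi)\). Completing the square in each \(t\neq0\) term gives
\[
\wh{\1_{\mathcal S_{Q,0}}}(\xi)=q^{d}\delta_0(\xi)q^{-1}+q^{-1}\sum_{t\neq0}G_t\,\chi\!\left(-\tfrac{Q^\vee(\xi)}{4t}\right),
\]
where \(|G_t|=q^{d/2}\).

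For \(\xi\neq0\), the \(t\)-sum is evaluated explicitly by \cite[Lemma~4.1]{KohShenExtension}. When \(d\) is odd, the quadratic character appears and the sum is a Gauss sum of size \(\le q^{(d-1)/2}\cdot q^{1/2}\cdot q^{-1}\cdot q=q^{d/2}\); in particular it vanishes when \(Q^\vee(\xi)=0\). Normalising against \(q^{\lfloor d/2\rfloor}\) needs more care. The cleaner route is to bound \(|\wh{\1_{\mathcal S_{Q,0}}}(\xi)|\le q^{\lfloor d/2\rfloor}\) for \(\xi\neq0\), using the explicit formulas:
\begin{itemize}
\item for \(d\) even, the value is \(\pm q^{d/2-1}(q-1)\) or \(\mp q^{d/2-1}\), so its modulus is \(\le q^{d/2}\);
\item for \(d\) odd, the value is \(\pm q^{(d-1)/2}\eta(\cdot)\), so its modulus is \(\le q^{(d-1)/2}\).
\end{itemize}
At zero, \(\wh{\1_{\mathcal S_{Q,0}}}(0)=|\mathcal S_{Q,0}|\), and \(|\mathcal S_{Q,0}|\le q^{d-1}+q^{\lfloor d/2\rfloor}\) by the same formulas. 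More precisely, the value is \(q^{d-1}\) for \(d\) odd and \(q^{d-1}\pm(q-1)q^{d/2-1}\) for \(d\) even.

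The case \(d\) even with a plus sign is where the zero-frequency term carries a surplus \(q^{-d}|E|^2(q-1)q^{d/2-1}\), which does not obviously fit the claimed bound. I expect this to be the main obstacle. The way around it is to fold the surplus into the nonzero frequencies. Writing \(\wh{\1_{\mathcal S_{Q,0}}}(\xi)=q^{d-1}\delta_0(\xi)+K(\xi)\), the function \(K\) is, for \(d\) even, \(q^{d/2-1}\epsilon\,(q\1_{Q^\vee(\xi)=0}-1)\) including at \(\xi=0\). The term \(-q^{d/2-1}\epsilon\) is a constant times \(\delta\)-mass. Then
\[
q^{-d}\sum_\xi|\wh{\1_E}|^2K\le q^{-d}\max|K|\sum_\xi|\wh{\1_E}|^2\le q^{d/2}|E|
\]
by Plancherel. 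This holds because \(|K|\le q^{d/2-1}(q-1)\le q^{d/2}\) everywhere, including \(\xi=0\). The odd case is identical with \(|K|\le q^{(d-1)/2}\). Combining with the main term \(q^{-d}|E|^2q^{d-1}=|E|^2/q\) gives the first claim.

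For the second claim, since \(d\ge3\) the form is isotropic. The count of isotropic vectors is \(|\mathcal S_{Q,0}|-1\ge q^{d-1}-(q-1)q^{d/2-1}-1\), and dividing by \(q-1\) gives at least \(q^{d-2}\) directions. I will check the small cases using the \(d\)-even minus-sign formula, where for \(d\ge4\) the count is \(\frac{q^{d-1}-1}{q-1}-q^{d/2-1}\ge q^{d-2}\). If needed, I will argue instead by splitting off a hyperbolic plane: every isotropic vector \(v\) pairs with \(w\), and the vectors \(v+\lambda\cdot(\text{stuff in }\langle v,w\rangle^\perp)\) adjusted by \(w\) give \(q^{d-2}\) distinct isotropic lines \(\{u-Q(u)\,w'+v\}\).
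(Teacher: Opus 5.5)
Your proposal is correct and follows the paper's proof: your $K$ is exactly $\wh g$ for $g=\1_{\mathcal S_{Q,0}}-q^{-1}\1_V$. The paper bounds it uniformly, including at $\xi=0$, by $q^{\lfloor d/2\rfloor}$ via the Gauss-sum evaluation, applies Plancherel, and reads off the direction count from the exact value of $|\mathcal S_{Q,0}|$, just as you do. One small fix: for odd $d$, in particular $d=3$, use the exact count $|\mathcal S_{Q,0}|=q^{d-1}$ (or your hyperbolic-plane argument) rather than the uniform lower bound $q^{d-1}-(q-1)q^{d/2-1}$, since the latter gives only $q+1-q^{1/2}<q$ directions when $d=3$.
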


\begin{proof}
Orthogonality and completion of squares give
\begin{equation} \label{eq_zero_radius_fourier}
\wh{\1_{\mathcal S_{Q,0}}}(\xi) 
= q^{-1}\sum\limits_{r\in \Fq}\sum\limits_{x\in V}\chi(rQ(x)-x\cdot \xi)
= q^{d-1}\1_{\xi=0} + q^{-1}\eta(\det A)G^d
 \sum_{r\ne0}\eta(r)^d  \chi\!\left(-\frac{Q^\vee(\xi)}{4r}\right),
\end{equation}
where \(\eta\) is the quadratic character, and $G=\sum_{t\in\Fq}\chi(t^2)$ is the quadratic Gauss sum, satisfying \(|G|=q^{1/2}\). The sum over $r$ has size at most $q^{1/2}$ when $d$ is odd and at most $q$ when $d$ is even. Denote $g=\1_{\mathcal S_{Q,0}}-q^{-1}\1_V$. It follows that 
\[
|\wh{g}(\xi)| = \left|q^{-1}\eta(\det A)G^d
 \sum_{r\ne0}\eta(r)^d  \chi\!\left(-\frac{Q^\vee(\xi)}{4r}\right)\right| \leq q^{\lfloor d/2 \rfloor}
\]
for every $\xi$. Now Fourier inversion and Plancherel's theorem give
\[
 \left|\nu_{E,Q}(0) - \frac{|E|^2}{q}\right| = \left|
 \frac{1}{q^d}\sum_{\xi\in V}
 |\wh{\1_E}(\xi)|^2
 \wh{g}(\xi)\right|
 \le
 q^{\lfloor d/2\rfloor}|E|.
\]
The first assertion then follows. 

Moreover, evaluating \eqref{eq_zero_radius_fourier} at $\xi=0$, yields
\[
 |\mathcal S_{Q,0}|
 =
 \begin{cases}
 q^{d-1},&d\ \text{odd},\\
 q^{d-1}+\epsilon_Q(q-1)q^{(d-2)/2},&d\ \text{even},
 \end{cases}
\]
where \(\epsilon_Q\in\{-1,1\}\). 
Every nonzero isotropic vector spans a unique isotropic line, and every such line contains exactly $q-1$ nonzero vectors. The exact formula above yields
\[
\frac{|\mathcal S_{Q,0}|-1}{q-1} \ge q^{d-2}
\]
for \(d\ge3\). The second assertion now follows. 
\end{proof}

\section{Proof of Theorem~\ref{thm:finite-main} for \texorpdfstring{\(k=d-1\)}{k=d-1}}
\label{sec:codim-one-proof}

In this section, we prove Theorem~\ref{thm:finite-main} in the case
\(k=d-1\).

\subsection{Extensions of a nondegenerate base}

\begin{lemma}
\label{lem:base-number}
There exists \(C_d>0\) such that, if \(|E|\ge C_dq^{d-1}\), then
\[
\sum_Dm_D\gg_d |E|^{d-1},\qquad \sum_Dm_D^2 \ll_d \frac{|E|^{2d-2}}{q^{t_{d-2}}}.
\]
\end{lemma}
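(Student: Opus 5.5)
The second bound is exactly the ``in particular'' clause of Lemma~\ref{lem:base-second-moment}: once $C_d\ge1$, the hypothesis $|E|\ge C_dq^{d-1}$ gives $|E|\ge q^{d-1}$, so nothing new is needed there. The real content is the lower bound $\sum_Dm_D\gg_d|E|^{d-1}$. This sum counts the ordered tuples $\mathbf y=(y_1,\ldots,y_{d-1})\in E^{d-1}$ that are nondegenerate $(d-2)$-simplices, i.e.\ whose edge Gram matrix $G(\mathbf y)$ has nonzero determinant. I will show that the number of degenerate tuples is at most $C'_d|E|^{d-1}/q$ plus lower-order terms, and this is $\le\frac12|E|^{d-1}$ once $q$ is large or $C_d$ is large.

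For the degenerate count I add vertices one at a time. Let $N_j$ be the number of tuples $(y_1,\ldots,y_j)\in E^j$ whose edge Gram matrix is singular. Suppose $(y_1,\ldots,y_{j-1})$ is nondegenerate, with $j-1\le d-2$. Then $W=\Span(y_i-y_1)$ is a nondegenerate space of dimension $j-2$, and $V=W\oplus W^\perp$. Write $P$ for the orthogonal projection of $y-y_1$ onto $W^\perp$. The new Gram determinant factors as
\[
\det G(y_1,\ldots,y_{j-1})\cdot Q(P(y-y_1)),
\]
so $\det G(y_1,\ldots,y_j)=0$ exactly when $Q(P(y-y_1))=0$.

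This means $y$ lies in the preimage of the zero quadric of the nondegenerate space $y_1+W^\perp$, which has dimension $d-j+2\ge 3$, under the projection along $W$. That preimage has size $q^{j-2}\,|\mathcal S_{Q|_{W^\perp},0}|\approx q^{d-1}$ by the exact count in Lemma~\ref{lem:isotropic-differences}. Recall $j\le d-1$. So the number of bad $y\in E$ is at most the intersection of $E$ with a set of density about $1/q$.

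Bounding that intersection by the trivial $q^{d-1}$ is too weak, since it would only give $N\lesssim |E|^{d-2}q^{d-1}$, which is comparable to the main term. So the key step, and the main obstacle, is an averaged estimate. I will average over the base rather than fix it. Summed over bases, the bad count is $\sum_{\text{bases}}|E\cap Z_{\text{base}}|$, where $Z_{\text{base}}$ is the cone above. I intend to control it as in Lemma~\ref{lem:isotropic-differences}: expand $\1_{Z}$ as $q^{-1}\1_V+g$, where $\wh g$ is supported off zero with $|\wh g|\lesssim q^{\lfloor (d-j+2)/2\rfloor}q^{j-2}$ on frequencies in $W^\perp$. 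The main part contributes $|E|^{j}/q$. For the error part I will apply Cauchy--Schwarz and Plancherel in $y$, together with averaging over $y_1$. The cleanest first attempt is the special case of one fixed direction: degeneracy of the last vertex implies $Q(y-y_1)=0$ only when $j=2$, and that case is handled directly by Lemma~\ref{lem:isotropic-differences}, giving $N_2\le |E|^2/q+q^{\lfloor d/2\rfloor}|E|$.

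If the general Fourier averaging proves messy, my fallback is a cruder count that suffices since we only need a constant fraction. A degenerate $(d-2)$-simplex either has some degenerate initial segment, or has $\det G=0$ as a polynomial condition of degree at most $d$ in the last vertex. I will use the Schwartz--Zippel-type bound that $E$ meets a nontrivial hypersurface of degree at most $d$ in at most $d\,q^{d-1}$ points, and sum over choices. This yields $N\lesssim_d |E|^{d-2}q^{d-1}$, hence $N\le \frac12|E|^{d-1}$ when $|E|\ge C_dq^{d-1}$ with $C_d$ large. In hindsight this crude route already closes the argument under exactly the stated hypothesis, so I would use it as the primary proof. The Fourier refinement is unnecessary, and the only care needed is to check that the determinant polynomial in the last vertex is not identically zero, which holds because $W^\perp$ is nondegenerate.

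Combining the two parts, $\sum_Dm_D=|E|^{d-1}-N_{d-1}\ge\frac12|E|^{d-1}$, and the second bound is quoted from Lemma~\ref{lem:base-second-moment}.
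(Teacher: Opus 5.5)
Your final (``crude'') route is correct and is essentially the paper's proof. The paper also adds vertices one at a time, uses the Schur complement identity $\det G_j=(\det G_{j-1})\,Q(z)$ with $z$ the $W^\perp$-component of $y_j-y_1$, bounds the excluded apices at each step by $O_d(q^{d-1})$ (the quadric count in $W^\perp$ times $|W|$), and quotes the second bound from Lemma~\ref{lem:base-second-moment}. Your intermediate claim that the resulting degenerate count $O_d(|E|^{d-2}q^{d-1})$ is ``comparable to the main term'' was mistaken: under $|E|\ge C_dq^{d-1}$ it is $\lesssim_d C_d^{-1}|E|^{d-1}$, so you can simply delete the Fourier-averaging detour.
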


\begin{proof}
The second estimate is Lemma~\ref{lem:base-second-moment}. For the first, note that $\sum_D m_D$ counts the number of non-degenerate ordered $(d-2)$-simplices $(y_1,\ldots,y_{d-1})$. Let us choose the vertices successively. There are $|E|$ choices of $y_1$. Assume that $y_1,\ldots,y_{j-1}$ $(2\leq j\leq d-1)$ have been chosen. Denote $W:=\Span(y_2-y_1,\ldots,y_{j-1}-y_1)$. Then \(W\) is nondegenerate and \(\dim W=j-2\). Write
\[
y_j-y_1=w+z, \qquad w\in W, \quad z\in W^\perp.
\]
Express $w=\sum_i c_i (y_{i+1}-y_1)$ and write $c=(c_2, \ldots, c_{j-1})^T$. Denote by $G_j$ the edge Gram matrix of $(y_1,\ldots,y_j)$. Then 
\[
G_j=\begin{pmatrix}
G_{j-1} &G_{j-1}c\\
c^T G_{j-1} & c^TG_{j-1} c+Q(z)
\end{pmatrix}.
\]
By the Schur complement formula, calculation reveals that \(\det G_j=(\det G_{j-1})\,Q(z)\). So the enlarged simplex is degenerate precisely when $Q(z)= 0$. This equation has at most $O(q^{d-j+1})$ solutions in the $(d-j+2)$-dimensional nondegenerate space \(W^\perp\). Each such value of \(z\) has \(|W|=q^{j-2}\) possible values of \(w\), so at most \(O_d(q^{d-1})\) ambient points are excluded. Now, the induction gives
\[
 \sum_Dm_D
 \ge
 |E|\bigl(|E|-O_d(q^{d-1})\bigr)^{d-2}
 \gg_d |E|^{d-1}
\]
after increasing \(C_d\).
\end{proof}

For \(\mathbf y\in\cY_D\), let
\[
 \cA_{\mathbf y}
 =
 \{x\in E:
   (x,y_1,\ldots,y_{d-1})
   \text{ is a nondegenerate \((d-1)\)-simplex}\}.
\]
Thus \(\cA_{\mathbf y}\) is the set of apices in \(E\) that extend \(\mathbf y\) to a nondegenerate \((d-1)\)-simplex.

\begin{lemma}
\label{lem:admissible-apex}
There exists \(C_d>0\) such that, if \(|E|\ge C_dq^{d-1}\), then, for every nondegenerate base \(\mathbf y\),
\[
        |\cA_{\mathbf y}|
        \ge
        |E|-O_d(q^{d-1})
        \gg_d |E|.
\]
\end{lemma}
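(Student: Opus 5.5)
The plan is to rerun the Schur-complement argument from the proof of Lemma~\ref{lem:base-number}, this time adding the apex as the last vertex. Fix a nondegenerate base \(\mathbf y=(y_1,\ldots,y_{d-1})\). Put
\[
W:=\Span(y_2-y_1,\ldots,y_{d-1}-y_1).
\]
Because the base is nondegenerate, \(W\) is a nondegenerate subspace of dimension \(d-2\), and so \(W^\perp\) is a nondegenerate plane. Reorder the vertices as \((y_1,\ldots,y_{d-1},x)\); this only conjugates the edge Gram matrix by an invertible integer matrix, so nondegeneracy is unaffected. For an apex \(x\), write
\[
x-y_1=w+z,\qquad w\in W,\quad z\in W^\perp .
\]

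The Schur complement computation from Lemma~\ref{lem:base-number} then gives
\[
\det G(y_1,\ldots,y_{d-1},x)=\det G(y_1,\ldots,y_{d-1})\,Q(z).
\]
Here \(\det G(y_1,\ldots,y_{d-1})\neq0\) by assumption. Hence \(x\notin\cA_{\mathbf y}\) exactly when \(Q(z)=0\), where \(z\) is the \(W^\perp\)-component of \(x-y_1\).

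Next we count the bad points. The restriction of \(Q\) to the nondegenerate plane \(W^\perp\) is a nondegenerate binary form, so \(\{z\in W^\perp : Q(z)=0\}\) has at most \(2q-1\) elements (it is either \(\{0\}\) or a union of two isotropic lines). Each such \(z\) pairs with \(|W|=q^{d-2}\) vectors \(w\). Thus the excluded points \(x\in V\) form the union of at most two affine hyperplanes through \(y_1\), and there are at most \(2q^{d-1}=O_d(q^{d-1})\) of them. Degenerate apices such as \(x=y_i\) are automatically among these points. Therefore
\[
|\cA_{\mathbf y}|\geq |E|-2q^{d-1},
\]
and taking \(C_d\geq 4\) gives \(|\cA_{\mathbf y}|\geq |E|/2\gg_d|E|\).

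No step here is a real obstacle. The only points needing care are that the determinant factorization still holds with the new vertex placed last, and that the bound for the zero set of \(Q\) on \(W^\perp\) is uniform in \(q\). The first is the same computation already done in Lemma~\ref{lem:base-number}; the second follows because a nondegenerate binary quadratic form has at most two isotropic lines.
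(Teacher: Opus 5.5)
Your proposal is correct and matches the paper's proof, which likewise reruns the Schur-complement count from Lemma~\ref{lem:base-number} with the apex as the new vertex: $W^\perp$ is a nondegenerate plane, so only $O(q)$ choices of $z$ are excluded, giving $O_d(q^{d-1})$ bad apices. Your explicit bound $(2q-1)q^{d-2}<2q^{d-1}$ and your handling of the vertex reordering spell out what the paper leaves implicit; the reordering is strictly a congruence $P^{T}GP$ with $\det P=\pm1$ rather than a conjugation, but the conclusion is the same.
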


\begin{proof}
Applying the argument in Lemma~\ref{lem:base-number} to $\mathbf y=(y_1,\ldots,y_{d-1})$ with the new
vertex \(x\), the conclusion then follows.
\end{proof}

\subsection{Energy of the distance vectors}

For a fixed nondegenerate base \(\mathbf y\), we now study the distribution of the distance vectors determined by the apices \(x\in\cA_{\mathbf y}\). The main quantity is the second moment of the multiplicities of these vectors. For $\mathbf y=(y_1,\ldots,y_{d-1})$, define
\[
 \rho_{\mathbf y}(x)
 =
 \bigl(Q(x-y_1),\ldots,Q(x-y_{d-1})\bigr)
 \in\Fq^{d-1}.
\]
For \(\mathbf a\in\Fq^{d-1}\), put
\[
 n_{\mathbf y}(\mathbf a)
 =
 |\{x\in\cA_{\mathbf y}:
       \rho_{\mathbf y}(x)=\mathbf a\}|,
 \qquad
 e_{\mathbf y}
 =
 \sum_{\mathbf a\in\Fq^{d-1}}
 n_{\mathbf y}(\mathbf a)^2.
\]

\begin{lemma}
\label{lem:isotropic-pairs}
Suppose that \(x\ne x'\), \(Q(x-x')=0\), and
\[
        Q(x-y_i)=Q(x'-y_i)
        \qquad
        (1\le i\le d-1).
\]
Then \((x,y_1,\ldots,y_{d-1})\) is degenerate.
\end{lemma}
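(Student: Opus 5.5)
Set \(v=x-x'\ne0\). The plan is to turn the hypothesis into a statement about \(v\) and then show that \(v\) lies in the radical of the Gram form of the simplex \((x,y_1,\ldots,y_{d-1})\).

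\textbf{Step 1: linearize.} Using the polarization \(Q(a+b)=Q(a)+2B(a,b)+Q(b)\) with \(a=x'-y_i\) and \(b=v\), each hypothesis \(Q(x-y_i)=Q(x'-y_i)\) becomes
\[
2B(x'-y_i,v)+Q(v)=0 .
\]
Since \(Q(v)=0\) and \(q\) is odd, this gives \(B(x'-y_i,v)=0\) for every \(i\). Rewriting \(x'-y_i=(x-y_i)-v\) and using \(B(v,v)=Q(v)=0\), we also get \(B(x-y_i,v)=0\) for every \(i\).

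\textbf{Step 2: place \(v\) in the radical.} Take the apex \(x\) as base vertex, so the edge vectors are \(u_i=y_i-x\) for \(1\le i\le d-1\), and let \(U=\Span(u_1,\ldots,u_{d-1})\). By Step 1, \(v\perp U\).

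\textbf{Step 3: conclude by dimension count.} Suppose, for contradiction, that the simplex is nondegenerate, i.e. \(\det G\ne0\). Then the \(u_i\) are linearly independent, so \(\dim U=d-1\), and \(U\) is a nondegenerate subspace. Hence \(V=U\oplus U^\perp\) with \(U^\perp\) nondegenerate of dimension \(1\). We have \(0\ne v\in U^\perp\), so \(U^\perp=\Fq v\). But \(Q(v)=0\), so \(Q\) vanishes on \(U^\perp\), contradicting its nondegeneracy. Therefore \(\det G=0\) and the simplex is degenerate.

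Equivalently, one can argue directly: \(v\in U^\perp\) and \(v\perp v\), so if \(v\in U\) then \(v\) is a nonzero vector of \(\rad(U)\), forcing \(\det G=0\). Otherwise \(U+\Fq v\) has dimension \(d\), hence equals \(V\), and then \(v\) is orthogonal to all of \(V\), contradicting nondegeneracy of \(Q\).

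There is no real obstacle. The only points needing care are that dividing by \(2\) in Step 1 uses odd characteristic, and that the congruence-invariant definition of degeneracy lets us use the apex as base vertex, since the Gram determinant does not depend on which vertex is chosen as base.
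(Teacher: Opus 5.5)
Your proof is correct and follows essentially the paper's route: the same polarization places every edge vector \(y_i-x\) in \(v^\perp\). Your contradiction (a nondegenerate \(U\) of dimension \(d-1\) would force \(U^\perp=\Fq v\) to be a nondegenerate but isotropic line) is just the dual phrasing of the paper's observation that \(\rad(v^\perp)=\langle v\rangle\), so \(B|_{v^\perp}\) has rank \(d-2\) and any \(d-1\) vectors in \(v^\perp\) have a singular Gram matrix. Two small remarks: your secondary ``direct'' argument tacitly assumes \(\dim U=d-1\) in the case \(v\notin U\), which is harmless since otherwise the \(u_i\) are dependent and \(\det G=0\); and since \(x\) is already the zeroth vertex of \((x,y_1,\ldots,y_{d-1})\), no change of base vertex is needed.
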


\begin{proof}
Let $u=x'-x$. Since \(Q(u)=0\), equality of the two distances gives
\[
        B(u,y_i-x)=0
        \qquad
        (1\le i\le d-1).
\]
Hence all \(y_i-x\) lie in \(u^\perp\). Since \(u\ne0\) is isotropic,
\[
        \rad(u^\perp)=\langle u\rangle,
        \qquad
        \rank(B|_{u^\perp})=d-2.
\]
The Gram matrix of \(d-1\) vectors in \(u^\perp\) therefore has rank at most \(d-2\), and is singular.
\end{proof}

\begin{proposition} 
\label{prop:distance-energy}
There exists \(C_d>0\) such that, if \(|E|\ge C_dq^{d-1}\), then
\[
        \sum_D\sum_{\mathbf y\in\cY_D}e_{\mathbf y}
        \ll_d
        \frac{|E|^{d+1}}{q^{d-1}}.
\]
\end{proposition}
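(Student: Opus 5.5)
We expand \(e_{\mathbf y}\) as a count of pairs of admissible apices and separate the diagonal from the off-diagonal part:
\[
\sum_D\sum_{\mathbf y\in\cY_D}e_{\mathbf y}
=\sum_{\mathbf y}\bigl|\{(x,x')\in\cA_{\mathbf y}^2:\rho_{\mathbf y}(x)=\rho_{\mathbf y}(x')\}\bigr|,
\]
where \(\mathbf y\) ranges over nondegenerate bases in \(E^{d-1}\). The diagonal pairs \(x=x'\) contribute at most \(|E|^{d-1}\cdot|E|=|E|^d\). This is \(\le |E|^{d+1}/q^{d-1}\) as soon as \(|E|\ge q^{d-1}\), so it is harmless.

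For the off-diagonal pairs \(x\ne x'\), with \(x\in\cA_{\mathbf y}\) and \(\rho_{\mathbf y}(x)=\rho_{\mathbf y}(x')\), Lemma~\ref{lem:isotropic-pairs} forces \(Q(x-x')\ne0\), since otherwise \((x,\mathbf y)\) would be degenerate. Now \(Q(x-y_i)=Q(x'-y_i)\) rewrites, after subtraction, as the linear condition
\[
2B(x'-x,y_i)=Q(x')-Q(x),
\]
that is, \(y_i\in H_{x,x'}\), the perpendicular-bisector hyperplane. This is a genuine affine hyperplane because \(x'-x\ne0\). The off-diagonal contribution is therefore at most
\[
\sum_{x\ne x'\in E}|E\cap H_{x,x'}|^{d-1}.
\]
The map \((x,x')\mapsto H_{x,x'}\) fibers as follows: given \(H\) and \(x\), the point \(x'\) is the reflection of \(x\) across \(H\). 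This is well defined because the normal \(u=x'-x\) satisfies \(Q(u)\ne0\), so \(H\) is nondegenerate and the reflection \(x'=x+\lambda u\) is determined. Hence each \(H\) arises from at most \(|E|\) ordered pairs, and the sum is
\[
\le |E|\sum_H|E\cap H|^{d-1}\le |E|\Bigl(\max_H|E\cap H|\Bigr)^{d-3}\sum_H|E\cap H|^2 .
\]

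For the last sum, Lemma~\ref{lem:hyperplane-variance} gives
\[
\sum_H|E\cap H|^2=\sum_H\Bigl(\frac{|E|}q\Bigr)^2+2\frac{|E|}q\sum_H|E\cap H|-\ldots\ll q^{d-1}|E|+\frac{|E|^2}{q^2}\cdot q^{d}\ll q^{d-1}|E|,
\]
where we used that the number of hyperplanes is \(\approx q^d\) and \(|E|\le q^d\). The \((d-3)\) remaining factors are the main obstacle, since the trivial bound \(|E\cap H|\le q^{d-1}\) only gives
\[
|E|\,q^{(d-1)(d-3)}\,q^{d-1}|E|,
\]
which is too weak. Instead we use the pointwise variance bound
\[
\Bigl||E\cap H|-\frac{|E|}q\Bigr|\le (q^{d-1}|E|)^{1/2}\le \frac{|E|}{q}\cdot q^{d/2}|E|^{-1/2}.
\]
For \(|E|\ge q^{d-1}\) this is at most \(\frac{|E|}{q}q^{1/2}\), so again it is not sufficient for a uniform bound.

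So rather than a maximum, we expand \(|E\cap H|^{d-1}\) with \(|E\cap H|=|E|/q+\delta_H\) and use \(|\delta_H|\le q^{d-1}\) together with the second-moment identity. Alternatively, one can bound directly
\[
\sum_H|E\cap H|^{d-1}\le \Bigl(\frac{|E|}{q}\Bigr)^{d-1}q^{d}+\ldots,
\]
giving a main term \(|E|\cdot |E|^{d-1}q^{d-(d-1)}\). This is still not the right size.

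The more promising route is to avoid powers of \(|E\cap H|\) altogether. We first count \(\mathbf y\) weighted by the constraint that all \(y_i\) lie in \(H\), but we keep the pair sum on the other side:
\[
\sum_{\mathbf y}\#\{(x,x'):H_{x,x'}\supset\mathbf y\}.
\]
A nondegenerate base spans a unique hyperplane \(H_{\mathbf y}\), so \(x'\) is determined by \(x\) as its reflection across \(H_{\mathbf y}\). The off-diagonal count is then at most
\[
\sum_{\mathbf y}\bigl|E\cap \sigma_{\mathbf y}E\bigr|=\sum_H N_H\,|E\cap\sigma_HE|,
\qquad N_H\le|E\cap H|^{d-1}.
\]
This still requires controlling the products \(N_H\cdot|E\cap\sigma_HE|\). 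I would attempt Cauchy--Schwarz in \(H\), bounding \(\sum_H N_H^2\) via Lemma~\ref{lem:base-second-moment}-type orthogonal-group moments and \(\sum_H|E\cap\sigma_HE|^2\) via the hyperplane-variance lemma. I expect this balancing step, which must reach \(|E|^{d+1}/q^{d-1}\), to be the crux of the proof.
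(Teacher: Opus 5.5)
You have a genuine gap: the proposal never proves the bound. Your reduction is correct up to writing the off-diagonal count as $\sum_H r(H)\,|E\cap H|^{d-1}$. Here $r(H)$ is the number of ordered pairs $x\ne x'$ in $E$ with perpendicular bisector $H$, and reflection gives $\max_H r(H)\le|E|$.

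The loss comes at the next step, where you replace $r(H)$ by $|E|$ and sum over all $\approx q^d$ hyperplanes. That throws away $\sum_H r(H)\le|E|^2$, which is the only input that makes the main term the right size. After the replacement the mean contribution is $|E|\cdot q^{d}(|E|/q)^{d-1}=q|E|^d$. This is too large by the factor $q^d/|E|$, and none of your later manipulations recovers it.

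The expansion you mention in passing does work once the weight $r(H)$ is kept. Put $\delta(H)=|E\cap H|-|E|/q$. Since $|E\cap H|,|E|/q\le q^{d-1}$, Taylor's theorem gives
\[
\sum_H r(H)|E\cap H|^{d-1}\le\Bigl(\frac{|E|}{q}\Bigr)^{d-1}\sum_H r(H)+(d-1)\Bigl(\frac{|E|}{q}\Bigr)^{d-2}\Bigl|\sum_H r(H)\delta(H)\Bigr|+C_dq^{(d-1)(d-3)}\sum_H r(H)\delta(H)^2 .
\]
Bound the three sums as follows:
\begin{itemize}
\item The first sum is at most $|E|^2$.
\item The last sum is at most $\max_H r(H)\sum_H\delta(H)^2\le q^{d-1}|E|^2$, by Lemma~\ref{lem:hyperplane-variance}.
\item The middle sum is at most $q^{(d-1)/2}|E|^2$, by Cauchy--Schwarz against the other two.
\end{itemize}
The total is $|E|^{d+1}q^{1-d}+|E|^dq^{(3-d)/2}+q^{(d-1)(d-2)}|E|^2$. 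This is $\ll|E|^{d+1}q^{1-d}$ once $|E|\ge C_dq^{d-1}$, which is exactly the paper's argument.

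Two of your side computations are also wrong and should not be reused.
\begin{itemize}
\item You claim $\sum_H|E\cap H|^2\ll q^{d-1}|E|$. In fact $\sum_H|E\cap H|^2\approx q^{d-1}|E|+q^{d-2}|E|^2$, and the second term dominates as soon as $|E|>q$. Your final $\ll$ would need $|E|\le q$.
\item A nondegenerate base has only $d-1$ vertices. Its affine span is therefore a $(d-2)$-flat, which lies in $q+1$ affine hyperplanes, so there is no hyperplane $H_{\mathbf y}$. Moreover, $x$ and $\mathbf y$ do not determine $x'$: the solutions of $\rho_{\mathbf y}(x')=\rho_{\mathbf y}(x)$ form a conic in a $2$-dimensional affine plane, up to $O(q)$ points. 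Your closing ``reflection across $H_{\mathbf y}$'' route, and the Cauchy--Schwarz balancing built on it, is therefore not available as stated.
\end{itemize}
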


\begin{proof}
Expanding the left-hand side counts tuples $(\mathbf y,x,x')$ with $x,x'\in\cA_{\mathbf y}$ and $\rho_{\mathbf y}(x)=\rho_{\mathbf y}(x')$. The diagonal \(x=x'\) contributes at most $|E|^d$. By Lemma~\ref{lem:isotropic-pairs}, no off-diagonal pair with \(Q(x-x')=0\) contributes.

Suppose that \(Q(x-x')\ne0\). Equality of the distance vectors implies that every \(y_i\) lies on the perpendicular bisector
\[
 H(x,x')
 =
 \{y\in V: \, Q(x-y)=Q(x'-y)\}
 = \Big\{y\in V:\, B(y,x-x')=\frac{Q(x)-Q(x')}{2}\Big\}.
\]
Its direction space is \((x-x')^\perp\), which is nondegenerate. 

For every nondegenerate affine hyperplane \(H\), let
\[
r(H) = |\{(x,x')\in E^2: \, x\ne x', \ H(x,x')=H\}|.
\]
If \(H=\{y:B(y,u)=c\}\), where \(Q(u)\ne0\), then reflection in \(H\) is given by
\[
\sigma_H(y) = y+\frac{2(c-B(y,u))}{Q(u)}u.
\]
Thus \(H\) and \(x\) determine \(x'=\sigma_H(x)\) uniquely. Hence
\begin{equation}\label{eq:r-basic}
\max_Hr(H)\le |E|,\qquad \sum_Hr(H)\le |E|^2.
\end{equation}

It remains to bound $\sum_Hr(H)|E\cap H|^{d-1}$, where the sum is over all nondegenerate $H$. Write $\delta(H)=|E\cap H|-q^{-1}|E|$. Since $0\le |E\cap H|,\, q^{-1}|E|\le q^{d-1}$, Taylor's theorem gives
\[
 |E\cap H|^{d-1}
 \le
 (q^{-1}|E|)^{d-1}
 +
 (d-1)(q^{-1}|E|)^{d-2}\delta(H)
 +
C_d q^{(d-1)(d-3)}\delta(H)^2.
\]
for some constant $C_d>0$. 
By Lemma~\ref{lem:hyperplane-variance} and
\eqref{eq:r-basic},
\[
 \sum_H r(H)\delta(H)^2
 \le
 \bigl(\max_Hr(H)\bigr)
 \sum_H\delta(H)^2
 \le
 q^{d-1}|E|^2.
\]
Cauchy--Schwarz inequality also gives
\[
 \left|\sum_Hr(H)\delta(H)\right|
 \le
 \left(\sum_Hr(H)\right)^{1/2}
 \left(\sum_Hr(H)\delta(H)^2\right)^{1/2}
 \le
 q^{(d-1)/2}|E|^2.
\]
Consequently,
\begin{align*}
 \sum_Hr(H)|E\cap H|^{d-1}
 &\ll_d
 \frac{|E|^{d+1}}{q^{d-1}}
 +
 |E|^{d}q^{(3-d)/2}
 +
 q^{(d-1)(d-2)}|E|^2.
\end{align*}
The first term equals $q^{-d+1}|E|^{d+1}$. The hypothesis \(|E|\ge C_dq^{d-1}\), for some sufficiently large $C_d>0$, gives
\[
 |E|^dq^{(3-d)/2}
 \le
 \frac{|E|^{d+1}}{q^{d-1}},
 \qquad
 q^{(d-1)(d-2)}|E|^2
 \le
 \frac{|E|^{d+1}}{q^{d-1}},
\]
and it also absorbs the diagonal contribution \(|E|^d\). This proves
the proposition.
\end{proof}

\subsection{Completion of the proof}

\begin{proof}[Proof of Theorem~\ref{thm:finite-main} when
\(k=d-1\)]
For each base class \(D\), put $S_D  = \sum_{\mathbf y\in\cY_D}e_{\mathbf y}$, and choose one representative \(\mathbf y_D\in\cY_D\) such that $e_{\mathbf y_D} \le S_D/m_D$. Let \(L_D\) be the number of distinct distance vectors $\rho_{\mathbf y_D}(x)$ with $x\in\cA_{\mathbf y_D}$. Cauchy--Schwarz inequality and Lemma~\ref{lem:admissible-apex} give
\[
 L_D
 \ge 
 \frac{|\cA_{\mathbf y_D}|^2}{e_{\mathbf y_D}}
 \gg_d
 \frac{|E|^2m_D}{S_D}.
\]
For a fixed base class \(D\), the edge Gram matrix of the base together with \(\rho_{\mathbf y_D}(x)\) determines the full edge Gram matrix by polarization. Distinct distance vectors therefore give distinct full Gram matrices, while the base distance data, which are recoverable from the full Gram matrix, distinguish different classes \(D\).
It follows that
\begin{equation}\label{eq:selection}
 |\cT^{d,\mathrm{nd}}_{d-1,Q}(E)|\geq \sum_D L_D
 \gg_d
 |E|^2\sum_D\frac{m_D}{S_D}
 \ge
 |E|^2
 \frac{\bigl(\sum_D\sqrt{m_D}\bigr)^2}
      {\sum_DS_D}.
\end{equation}

H\"older's inequality and Lemma~\ref{lem:base-number} lead to
\[
 \left(\sum_D\sqrt{m_D}\right)^2
 \ge
 \frac{\left(\sum_Dm_D\right)^3}
      {\sum_Dm_D^2}\gg_d
 q^{t_{d-2}}|E|^{d-1}.
\]
Combining this estimate, Proposition~\ref{prop:distance-energy}, and \eqref{eq:selection}, we obtain
\[
 |\cT^{d,\mathrm{nd}}_{d-1,Q}(E)|
 \gg_d
 |E|^2\frac{q^{t_{d-2}}|E|^{d-1}}
      {|E|^{d+1}/q^{d-1}}
 =
 q^{t_{d-2}+d-1}
 =
 q^{\binom d2}.
\]
This proves Theorem~\ref{thm:finite-main} in the codimension-one case. The corresponding sharpness statement is included in the general odd-codimension construction in Section~\ref{sec:sharpness}.
\end{proof}

\section{Isotropic quotients and Proof of Theorem~\ref{thm:finite-main}}
\label{sec:quotient}

Throughout this section we assume \(2\le k\le d-2\). This restriction is needed for a nontrivial quotient step, since the quotient has dimension \(d-2\). 

\subsection{A large image in an isotropic quotient}

Let \(\ell\subset V\) be an isotropic line. Then $\ell\subset\ell^\perp$ and $\dim\ell^\perp=d-1$. For an affine coset \(H=a+\ell^\perp\), define
\[
 \pi_{\ell,H}:H\longrightarrow \ell^\perp/\ell,
 \qquad
 \pi_{\ell,H}(x)=x-a+\ell.
\]

\begin{lemma}\label{lem:large-projection}
Let \(d\ge3\) and \(E\subset V\). Suppose that $1\le M\le q^{d-2}/8$ and $|E|\ge8qM$. Then there are an isotropic line \(\ell\) and a coset \(H=a+\ell^\perp\) such that
\[
        |\pi_{\ell,H}(E\cap H)|>M.
\]
\end{lemma}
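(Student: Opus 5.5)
The plan is to argue by contradiction via a double count of isotropic differences. Suppose that for every isotropic line \(\ell\) and every coset \(H=a+\ell^\perp\) we have \(|\pi_{\ell,H}(E\cap H)|\le M\). The key observation is structural. If \(x\ne y\) and \(Q(x-y)=0\), then \(x-y\) spans a unique isotropic line \(\ell\). Since \(\ell\subset\ell^\perp\), the points \(x,y\) lie in the same coset \(H\) of \(\ell^\perp\) and have the same image under \(\pi_{\ell,H}\). Conversely, two points of \(E\cap H\) have the same image under \(\pi_{\ell,H}\) exactly when their difference lies in \(\ell\). Hence
\[
\nu_{E,Q}(0)-|E|
=\sum_{\ell}\ \bigl|\{(x,y)\in E^2:\ x\ne y,\ x-y\in\ell\}\bigr|,
\]
where \(\ell\) runs over isotropic lines. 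Each summand is the number of ordered off-diagonal pairs lying in a common fibre of some \(\pi_{\ell,H}\).

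For the lower bound, fix \(\ell\) and a coset \(H\). The set \(E\cap H\) splits into at most \(M\) fibres. By Cauchy--Schwarz, the number of ordered pairs in common fibres is at least \(|E\cap H|^2/M\), so the off-diagonal count is at least \(|E\cap H|^2/M-|E\cap H|\). Summing over the \(q\) cosets of \(\ell^\perp\) and applying Cauchy--Schwarz again gives at least \(|E|^2/(qM)-|E|\) for each \(\ell\). By Lemma~\ref{lem:isotropic-differences}, there are at least \(q^{d-2}\) isotropic lines, so
\[
\nu_{E,Q}(0)-|E|\ \ge\ q^{d-2}\Bigl(\frac{|E|^2}{qM}-|E|\Bigr).
\]

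For the upper bound, the same Lemma~\ref{lem:isotropic-differences} gives \(\nu_{E,Q}(0)\le |E|^2/q+q^{\lfloor d/2\rfloor}|E|\). Combining the two bounds yields
\[
\frac{q^{d-3}|E|^2}{M}\ \le\ \frac{|E|^2}{q}+q^{\lfloor d/2\rfloor}|E|+q^{d-2}|E|.
\]
To reach a contradiction, split the left-hand side into two halves.
\begin{itemize}
\item Using \(M\le q^{d-2}/8\), the first half is at least \(4|E|^2/q\), which exceeds \(|E|^2/q\).
\item Using \(|E|\ge 8qM\), the second half is at least \(4q^{d-2}|E|\). Since \(\lfloor d/2\rfloor\le d-2\) for \(d\ge3\), this exceeds \(q^{d-2}|E|+q^{\lfloor d/2\rfloor}|E|\).
\end{itemize}
So the left-hand side strictly exceeds the right-hand side, which is the desired contradiction.

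The main point to check carefully is the bookkeeping in the first step. Each isotropic off-diagonal pair must be counted exactly once, namely through its own line \(\ell\). The fibre structure of \(\pi_{\ell,H}\) must coincide exactly with "difference in \(\ell\)," and this holds because \(\pi_{\ell,H}(x)=x-a+\ell\). The final arithmetic has ample room given the constant \(8\).
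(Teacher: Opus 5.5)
Your proof is correct and is essentially the paper's argument: assume every projection has at most $M$ points, use Cauchy--Schwarz to get at least $|E|^2/(qM)-|E|$ off-diagonal pairs with difference in each isotropic line, sum over the at least $q^{d-2}$ isotropic directions, and contradict the upper bound for $\nu_{E,Q}(0)$ from Lemma~\ref{lem:isotropic-differences}. The differences are only cosmetic: you apply Cauchy--Schwarz coset by coset rather than once over the at most $qM$ lines parallel to $\ell$, and you carry the $-|E|$ term into the final comparison instead of absorbing it early using $|E|\ge 8qM$.
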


\begin{proof}
Assume the contrary. For any fixed  isotropic direction \(\ell\). Each of the $q$ cosets of $\ell^\perp$ is partitioned
into affine lines parallel to $\ell$, and the image size counts exactly the fibers meeting $E$. There are \(q\) cosets of \(\ell^\perp\), and inside each coset the fibers of \(\pi_{\ell,H}\) are exactly the affine lines parallel to \(\ell\). By assumption, at most \(qM\) such affine lines $\gamma$ meet \(E\).

Write \(n_\gamma:=|E\cap\gamma|\). Then $\sum_\gamma n_\gamma = |E|$. Cauchy--Schwarz inequality and the condition \(|E|\ge8qM\) give
\[
|\{(x,y)\in E^2:\, x\neq y, \, x-y\in \ell\}| = \sum_\gamma n_\gamma(n_\gamma-1)
 \ge \frac{|E|^2}{qM}-|E|
 \ge \frac{|E|^2}{2qM}.
\]
There are at least \(q^{d-2}\) isotropic directions by Lemma~\ref{lem:isotropic-differences}. A nonzero isotropic difference
spans a unique isotropic direction, so summing the last estimate over all isotropic directions yields
\[
\nu_{E,Q}(0)-|E| = \sum_\ell |\{(x,y)\in E^2:\, x\neq y, \, x-y\in \ell\}| \ge \frac{q^{d-3}|E|^2}{2M}.
\]
Since \(M\le q^{d-2}/8\), this lower bound is at least \(4|E|^2/q\). Also, using \(|E|\ge8qM\), 
\[
 \frac{q^{d-3}|E|^2/(2M)}{q^{\lfloor d/2\rfloor}|E|}
 \ge4q^{d-2-\lfloor d/2\rfloor}\ge4.
\]
It follows that 
\[
\nu_{E,Q}(0)\geq \max\left\{\frac{4|E|^2}{q},\, 4q^{\lfloor d/2\rfloor}|E|\right\}>\frac{|E|^2}{q}+q^{\lfloor d/2\rfloor}|E|,
\]
which contradicts Lemma~\ref{lem:isotropic-differences}.
\end{proof}

\subsection{One reduction step}

For an isotropic line $\ell$, the restriction of \(Q\) to \(\ell^\perp\) has radical \(\ell\). Therefore $Q_\ell(v+\ell)=Q(v)$ defines a nondegenerate quadratic form on the \((d-2)\)-dimensional space $W_\ell=\ell^\perp/\ell$. 

\begin{lemma}\label{lem:preserve-gram}
If \(x_0,\ldots,x_k\in H=a+\ell^\perp\), then
\[
 Q(x_i-x_j)
 =
 Q_\ell\!\left(
   \pi_{\ell,H}(x_i)-\pi_{\ell,H}(x_j)
 \right)
 \qquad(0\le i<j\le k).
\]
In particular, the edge Gram matrix is preserved by
\(\pi_{\ell,H}\).
\end{lemma}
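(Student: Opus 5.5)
The plan is a direct computation using only the definitions of \(\pi_{\ell,H}\) and \(Q_\ell\).

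First, the differences \(x_i-x_j\) lie in \(\ell^\perp\). Write \(x_i=a+v_i\) with \(v_i\in\ell^\perp\). Then \(x_i-x_j=v_i-v_j\in\ell^\perp\), and by definition \(\pi_{\ell,H}(x_i)=v_i+\ell\). Since the quotient map \(\ell^\perp\to W_\ell\) is linear,
\[
\pi_{\ell,H}(x_i)-\pi_{\ell,H}(x_j)=(v_i-v_j)+\ell .
\]
Applying the definition \(Q_\ell(v+\ell)=Q(v)\) with \(v=v_i-v_j=x_i-x_j\) gives the displayed identity.

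The only point requiring care is that \(Q_\ell\) is well defined on \(\ell^\perp/\ell\), even though the surrounding text already asserts this. Let \(v\in\ell^\perp\) and \(w\in\ell\). Then
\[
Q(v+w)=Q(v)+2B(v,w)+Q(w)=Q(v),
\]
because \(B(v,w)=0\) for \(v\in\ell^\perp\), and \(Q(w)=0\) since \(\ell\) is an isotropic line. Nondegeneracy of \(Q_\ell\) is not needed for the identity; it follows because \(\ell\) is exactly the radical of \(B|_{\ell^\perp}\).

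For the statement about Gram matrices, define the bilinear form \(B_\ell\) on \(W_\ell\) by polarization from \(Q_\ell\). By the identity just proved, \(B_\ell\) is the form induced by \(B\). Put \(u_i=x_i-x_0\) and \(\bar u_i=\pi_{\ell,H}(x_i)-\pi_{\ell,H}(x_0)=u_i+\ell\). Polarization gives
\[
B(u_i,u_j)=\frac{Q(u_i)+Q(u_j)-Q(u_i-u_j)}{2}.
\]
Each of the three terms is a pairwise value \(Q(x_a-x_b)\): here \(u_i-u_j=x_i-x_j\), \(Q(u_i)=Q(x_i-x_0)\), and \(Q(u_j)=Q(x_j-x_0)\). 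The first part shows each such value is preserved under \(\pi_{\ell,H}\), so \(B_\ell(\bar u_i,\bar u_j)=B(u_i,u_j)\) for all \(i,j\). Hence the edge Gram matrix is unchanged by \(\pi_{\ell,H}\).

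None of these steps presents a real obstacle. The substantive point is the vanishing of the cross term and of \(Q(w)\), which uses the isotropy of \(\ell\) and the inclusion \(\ell\subset\ell^\perp\).
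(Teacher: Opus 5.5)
Your proposal is correct and follows essentially the same route as the paper, which likewise checks that \(Q\) is constant on the fibers of \(\ell^\perp\to\ell^\perp/\ell\) via \(Q(v+te)=Q(v)+2tB(v,e)+t^2Q(e)=Q(v)\), reads off the identity from \(\pi_{\ell,H}(x_i)-\pi_{\ell,H}(x_j)=x_i-x_j+\ell\), and obtains the Gram-matrix statement by polarization. The only difference is that you write out the polarization step explicitly, which the paper states in one line.
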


\begin{proof}
If \(v\in\ell^\perp\) and \(e\) spans \(\ell\), then
\[
        Q(v+te)=Q(v)+2tB(v,e)+t^2Q(e)=Q(v).
\]
Thus \(Q\) is constant on every quotient fiber. So
\[
 Q(x_i-x_j)
 =
 Q_\ell(x_i-x_j+\ell)
 =
 Q_\ell\!\left(
   \pi_{\ell,H}(x_i)-\pi_{\ell,H}(x_j)
 \right)
 \qquad(0\le i<j\le k).
\]
The polarization shows that the Gram-matrix is also preserved by $\pi_{\ell,H}$.
\end{proof}

\begin{proposition}\label{prop:one-step}
Let \(d\ge4\), \(2\le k\le d-2\), and \(0\le\alpha<d-2\). Suppose that there are constants \(C_0,c_0>0\), independent of \(q\),
such that, for every nondegenerate $(d-2)$-dimensional quadratic space $(W,P)$ and every \(A\subset W\),
\[
 |A|\ge C_0q^\alpha
 \quad\Longrightarrow\quad
 |\cT^{d-2,\mathrm{nd}}_{k,P}(A)|
 \ge c_0q^{t_k}.
\]
Then there are constants \(C_1>0\), depending only on \(d,k,\alpha,C_0\), and \(c_0\), such that, for every nondegenerate
quadratic form \(Q\) on \(\Fq^d\) and every \(E\subset\Fq^d\),
\[
 |E|\ge C_1q^{\alpha+1}
 \quad\Longrightarrow\quad
 |\cT^{d,\mathrm{nd}}_{k,Q}(E)|
 \ge c_0q^{t_k}.
\]
\end{proposition}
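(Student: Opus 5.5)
The plan is to apply Lemma~\ref{lem:large-projection} with \(M=C_0q^\alpha\), transfer the image set to the quotient \(W_\ell=\ell^\perp/\ell\), and pull back the simplex classes found there. First, check the hypotheses of Lemma~\ref{lem:large-projection}. Since \(\alpha<d-2\) is fixed, we have \(M=C_0q^\alpha\le q^{d-2}/8\) once \(q\) is large; and \(|E|\ge8qM\) holds provided \(C_1\ge8C_0\). The finitely many small \(q\) need separate handling. For these, \(q\) is bounded, so we need only one nondegenerate class, while \(c_0q^{t_k}\) is bounded. The cleanest route is to enlarge \(C_1\) so that \(C_1q^{\alpha+1}>q^d\) for all small \(q\), which makes the hypothesis vacuous. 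This works because \(\alpha+1<d-1\), so one simply takes \(C_1\ge q_0^{d}\).

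The lemma then gives an isotropic line \(\ell\) and a coset \(H=a+\ell^\perp\) with \(A:=\pi_{\ell,H}(E\cap H)\subset W_\ell\) and \(|A|>C_0q^\alpha\). As noted before Lemma~\ref{lem:preserve-gram}, \((W_\ell,Q_\ell)\) is a nondegenerate quadratic space of dimension \(d-2\). Identifying it with \(\Fq^{d-2}\) and a nondegenerate form \(P\), the hypothesis of the proposition yields
\[
|\cT^{d-2,\mathrm{nd}}_{k,P}(A)|\ge c_0q^{t_k}.
\]

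Next, lift the classes back to \(E\). For each nondegenerate class in \(A\), pick a representative \((a_0,\ldots,a_k)\in A^{k+1}\) and preimages \(x_i\in E\cap H\) with \(\pi_{\ell,H}(x_i)=a_i\). By Lemma~\ref{lem:preserve-gram}, the edge Gram matrix of \((x_0,\ldots,x_k)\) with respect to \(Q\) equals that of \((a_0,\ldots,a_k)\) with respect to \(Q_\ell\). In particular it is nonsingular, so \(\mathbf x\) is a nondegenerate \(k\)-simplex in \(E\). By Witt's theorem, nondegenerate classes in both spaces are classified by their Gram matrices. Hence distinct classes in \(A\) have distinct Gram matrices, and they lift to distinct nondegenerate classes in \(E\). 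This gives
\[
|\cT^{d,\mathrm{nd}}_{k,Q}(E)|\ge|\cT^{d-2,\mathrm{nd}}_{k,P}(A)|\ge c_0q^{t_k}.
\]

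The main obstacle is only bookkeeping: making the constant \(C_1\) uniform in \(q\) and in the isometry class of \(Q\). The hypothesis is stated for every nondegenerate \((d-2)\)-dimensional space, so it does not matter which isometry class \(W_\ell\) falls into. The small-\(q\) regime is handled by the vacuity trick above, and the range \(2\le k\le d-2\) guarantees that nondegenerate \(k\)-simplices exist in \(W_\ell\).
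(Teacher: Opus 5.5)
Your proposal is correct and follows the paper's proof essentially step for step. You apply Lemma~\ref{lem:large-projection} with \(M=C_0q^\alpha\), invoke the \((d-2)\)-dimensional hypothesis on \(A=\pi_{\ell,H}(E\cap H)\), and lift representatives via Lemma~\ref{lem:preserve-gram} and the Gram-matrix classification; your vacuity choice \(C_1\ge\max\{8C_0,q_0^d\}\) simply makes explicit the paper's brief remark about small \(q\). The one hypothesis of Lemma~\ref{lem:large-projection} you did not check is \(M\ge1\). The paper secures it by first replacing \(C_0\) with \(\max\{C_0,1\}\), which is harmless because enlarging \(C_0\) only weakens the assumed implication.
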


\begin{proof}
Replacing \(C_0\) by \(\max\{C_0,1\}\), we may assume that \(C_0\ge1\). Put \(M=C_0q^\alpha\). Since \(\alpha<d-2\), for all
sufficiently large \(q\) we have \(M\le q^{d-2}/8\). If \(|E|\ge8qM\), Lemma~\ref{lem:large-projection} gives
\(\ell\) and \(H\) such that
\[
        A:=\pi_{\ell,H}(E\cap H),
        \qquad
        |A|>M.
\]
Applying the assumed condition to \(A\subset W_\ell\), we obtain $|\cT^{d-2,\mathrm{nd}}_{k,Q_\ell}(A)|  \ge c_0q^{t_k}$. For each quotient simplex, choose one representative of each vertex in \(E\cap H\). Lemma~\ref{lem:preserve-gram} shows that the lifted simplex is nondegenerate and has the same Gram matrix. Distinct quotient
classes have distinct Gram matrices, so they remain distinct after lifting. As a result, $|\cT^{d,\mathrm{nd}}_{k,Q}(E)| \ge c_0 q^{t_k}$ for sufficiently large $q$. The remaining bounded values of \(q\) are handled by increasing $C_0$ to some larger constant $C_1$.
\end{proof}

\subsection{Completion of the proof of Theorem ~\ref{thm:finite-main}}

The case \(k=d-1\) was proved in Section~\ref{sec:codim-one-proof}, so it remains to consider \(2\le k\le d-2\).

\begin{proof}[Proof of Theorem~\ref{thm:finite-main} when $2\leq k\leq d-2$]

Suppose first that \(d-k\) is even. Put $r=(d-k)/2$. In dimension \(k\), \cite[Theorem 1.5]{BHIPR} gives the base-case threshold $a_0=k-(k-1)/(k+1)$:
\[
|E|\geq C_k q^{a_0} \quad\Longrightarrow\quad
 |\cT^{k}_{k,Q}(E)|
 \geq c_k q^{t_k}
\]
for some $C_k,c_k>0$. By Lemma~\ref{lem:degenerate-classes}, a positive proportion of the nondegenerate classes are nondegenerate, i.e, $|\cT^{k,\mathrm{nd}}_{k,Q}(E)|
 \geq c_k q^{t_k}$. Apply Proposition~\ref{prop:one-step} successively \(r\) times. After \(j\)
steps the exponent is \(a_j=a_0+j\). At the \(j\)-th step, the ambient dimension before reduction is \(k+2j\), and the range condition in Proposition~\ref{prop:one-step} holds because
\[
 (k+2j-2)-a_{j-1} = j-1+\frac{k-1}{k+1}>0.
\]
The final exponent is
\[
 a_r =  k-\frac{k-1}{k+1}+\frac{d-k}{2}  = \frac{d+k}{2}-\frac{k-1}{k+1}.
\]

Now suppose that \(d-k\) is odd. Put $r=(d-k-1)/2$. The codimension-one case proved in Section~\ref{sec:codim-one-proof}, applied in ambient dimension \(k+1\), gives the initial exponent $a_0=k$. Apply Proposition~\ref{prop:one-step} successively \(r\) times. At the \(j\)-th step, the ambient dimension before reduction is \(k+1+2j\), while \(a_{j-1}=k+j-1\). Hence
\[
 (k+1+2j-2)-a_{j-1}=j>0,
\]
so the range condition is satisfied. The final exponent is
\[
a_r  = k+\frac{d-k-1}{2} = \frac{d+k-1}{2}.
\]
These are precisely the two values in \eqref{eq:beta}. In the even case, direct subtraction gives
\[
\left(d-\frac{d-1}{k+1}\right)-\beta_{d,k} = \frac{k-1}{k+1}\,\frac{d-k}{2}.
\]
In the odd case, it gives
\[
\left(d-\frac{d-1}{k+1}\right)-\beta_{d,k} = \frac{(k-1)(d-k-1)/2+1}{k+1}.
\]
This also verifies the gain formula stated after Theorem~\ref{thm:finite-main}.
\end{proof}

\section{Sharpness examples}\label{sec:sharpness}

We first identify the largest dimension of a subspace on which the
quadratic form has rank at most \(k-1\).

\begin{proposition}\label{prop:sharpness}
Let \(d\ge3\), let \(Q\) be a nondegenerate quadratic form on
\(V=\Fq^d\), and let \(2\le k\le d-1\). Then there is a subspace
\(W\subset V\) such that
\[
 \dim W=\left\lfloor\frac{d+k-1}{2}\right\rfloor,
 \qquad
 \rank(B|_W)=k-1.
\]
Every \(k\)-simplex contained in an affine translate of \(W\) is
degenerate. Moreover, no subspace of larger dimension can have
restricted rank at most \(k-1\).
\end{proposition}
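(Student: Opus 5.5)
The plan is to build \(W\) from a nondegenerate part together with a totally isotropic radical, using the Witt index of \((V,Q)\). Write \(d-k+1=2r+\epsilon\) with \(\epsilon\in\{0,1\}\), so that \(\lfloor (d+k-1)/2\rfloor=(k-1)+r\).

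\textbf{Construction of \(W\).} I first check that \(V\) contains \(r\) mutually orthogonal hyperbolic planes \(P_1,\ldots,P_r\) whose orthogonal complement \(U=(P_1\oplus\cdots\oplus P_r)^\perp\) has dimension \(d-2r=k-1+\epsilon\). Over \(\Fq\) with \(q\) odd, the Witt index of a nondegenerate \(d\)-dimensional space is at least \(\lfloor (d-1)/2\rfloor\), because every nondegenerate space of dimension at least three is isotropic (recalled in Section~\ref{sec:finite-preliminaries}) and one can split off hyperbolic planes. Since \(2r\le d-k+1\le d-1\), the required \(r\) planes exist. In each \(P_i\) I choose an isotropic vector \(e_i\). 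Inside \(U\), which is nondegenerate, I choose a nondegenerate subspace \(U'\) of dimension \(k-1\); this is trivial if \(\epsilon=0\), and if \(\epsilon=1\) I take the orthogonal complement in \(U\) of an anisotropic vector. I then set
\[
W=U'\oplus\Span(e_1,\ldots,e_r).
\]
The \(e_i\) are mutually orthogonal, isotropic, and orthogonal to \(U'\), so \(\rad(B|_W)\supseteq\Span(e_i)\). The restriction to \(U'\) is nondegenerate, so \(\rank(B|_W)=k-1\) and \(\dim W=k-1+r\), as required.

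\textbf{Degeneracy of simplices in translates of \(W\).} Take \(x_0,\ldots,x_k\in a+W\). The edge vectors \(u_i=x_i-x_0\) lie in \(W\), so \(G=(B(u_i,u_j))\) equals \(L^TML\), where \(M\) is the Gram matrix of \(B|_W\) and \(L\) is the coordinate matrix. Hence
\[
\rank G\le\rank M=k-1<k,
\]
so \(\det G=0\).

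\textbf{Maximality.} Suppose \(W'\) satisfies \(\rank(B|_{W'})\le k-1\). Its radical \(R=W'\cap W'^\perp\) is totally isotropic, so \(\dim R\le\) the Witt index \(\le\lfloor d/2\rfloor\). The key inequality is the standard fact \(\dim R\le d-\dim W'\), which holds since \(R\subseteq W'^\perp\). Then
\[
\dim W'=\rank+\dim R\le (k-1)+\dim R,
\qquad
\dim R\le d-\dim W'.
\]
Adding these gives \(2\dim W'\le d+k-1\), hence \(\dim W'\le\lfloor (d+k-1)/2\rfloor\).

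The main obstacle is the existence step: making sure enough hyperbolic planes exist and that the leftover part still contains a nondegenerate \((k-1)\)-subspace, without running into discriminant issues. Both are handled by the isotropy of nondegenerate spaces of dimension at least three and by the bound \(2r\le d-1\).
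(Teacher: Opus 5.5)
Your proposal is correct and follows essentially the paper's proof. You use the same Witt-index construction ($r=\lfloor (d-k+1)/2\rfloor$ hyperbolic planes, a nondegenerate $(k-1)$-dimensional piece obtained by removing an anisotropic line when $d-k$ is even, and an $r$-dimensional totally isotropic radical), the same Gram-rank bound for degeneracy, and the same maximality inequalities $\dim W'-\dim R\le k-1$ and $\dim R\le d-\dim W'$.
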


\begin{proof}
Write $h:=\left\lfloor (d-k+1)/2\right\rfloor$. By Witt decomposition, one has $V\simeq \mathbb H^{\perp w}\perp V_{\mathrm{an}}$, where \(\mathbb H\) is a hyperbolic plane and \(V_{\mathrm{an}}\) is anisotropic. Over a finite field of odd order, \(\dim V_{\mathrm{an}}\le2\), and hence $w\ge\left\lfloor (d-1)/2\right\rfloor$. Since \(k\ge2\), we have $h\le\left\lfloor (d-1)/2\right\rfloor\le w$. We may therefore choose a nondegenerate split subspace \(S\simeq\mathbb H^{\perp h}\) and a maximal totally isotropic subspace \(R\subset S\), with \(\dim R=h\).

If \(d-k\) is odd, then \(\dim S=d-k+1\). Set \(U=S^\perp\), so \(\dim U=k-1\). If \(d-k\) is even, then \(\dim S=d-k\) and \(\dim S^\perp=k\). In this case choose an anisotropic vector \(v\in S^\perp\) and set $U=S^\perp\cap v^\perp$. Such a vector exists because \(S^\perp\) is nondegenerate. Since
\(Q(v)\ne0\), we have $S^\perp=\langle v\rangle\perp U$, so \(U\) is nondegenerate and \(\dim U=k-1\).

In both cases, put \(W=U\oplus R\). The summands are orthogonal,
\(U\) is nondegenerate, and \(R\) is totally isotropic. Thus
\[
        \rad(W)=R,
        \qquad
        \rank(B|_W)=\dim U=k-1.
\]
Furthermore,
\[
 \dim W=k-1+h
 =\left\lfloor\frac{d+k-1}{2}\right\rfloor.
\]

Let \(x_0,\ldots,x_k\) belong to an affine translate of \(W\). All
edge vectors \(x_i-x_0\), \(1\le i\le k\), lie in \(W\). Their Gram
matrix has rank at most \(\rank(B|_W)=k-1\), and hence its determinant
vanishes.

For the final assertion, let \(L\subset V\), put \(s=\dim L\), and
write \(\rho=\dim\rad(L)\). If \(\rank(B|_L)\le k-1\), then $s-\rho\le k-1$. Since \(\rad(L)\subset L^\perp\) and \(\dim L^\perp=d-s\), we also have \(\rho\le d-s\). Consequently,
\[
        s\le k-1+\rho\le k-1+d-s,
\]
and hence $s\le\left\lfloor (d+k-1)/2\right\rfloor$. The subspace \(W\) constructed above attains equality.
\end{proof}

\begin{proof}[Proof of the sharpness of Theorem \ref{thm:finite-main} when $d-k$ is odd.]
Let \(E=W\) be constructed in Proposition \ref{prop:sharpness}. Then
\[
        |E|=q^{\lfloor(d+k-1)/2\rfloor},
        \qquad
        \cT^{d,\mathrm{nd}}_{k,Q}(E)=\varnothing.
\]
For every \(\alpha<\lfloor(d+k-1)/2\rfloor\) and every fixed \(C>0\), this example satisfies \(|E|\ge Cq^\alpha\) for all sufficiently large \(q\). Thus no exponent below \(\lfloor(d+k-1)/2\rfloor\) can guarantee a positive proportion of nondegenerate \(k\)-simplex classes. If \(d-k\) is odd, this obstruction equals $(d+k-1)/2=\beta_{d,k}$, which proves the sharpness assertion in Theorem~\ref{thm:finite-main}. If \(d-k\) is even, the exponent in Theorem~\ref{thm:finite-main} lies $2/(k+1)$ above the geometric obstruction.
\end{proof}

\clearpage
\part{The Euclidean setting}
\label{part:euclidean}

\section{Euclidean preliminaries and the base--apex decomposition}
\label{sec:euclidean-preliminaries}

The proof of Theorem~\ref{thm:euclidean-codim-one} is guided by the same
base--apex architecture as the codimension-one finite field argument in
Section~\ref{sec:codim-one-proof}, but the discrete counting mechanism is
replaced by a geometric analytic one.  In the finite field setting, a
second-moment estimate controls the distribution of the base classes,
while point hyperplane variance controls collisions among the
apex-distance vectors.  In the Euclidean setting, these roles are played
by absolute continuity of the affine-span measure and an averaged
\(L^2\) estimate for a family of cylindrical projection maps.  Together
they yield an \(L^2\) squared distance star measure for almost every
full-rank anchor tuple.  Coordinate marginalization then propagates this
control to all lower ranks, and a fixed-pin induction assembles the full
pinned simplex configuration while retaining all previously constructed
edge data.

\subsection{Measures, Fourier transforms, and energies}

We write \(\cL^m\) for Lebesgue measure on \(\R^m\) and
\(\cH^s\) for \(s\)-dimensional Hausdorff measure. For a finite Borel
measure \(\mu\) on \(\R^d\), our Fourier transform convention is
\[
        \wh\mu(\xi)
        =
        \int_{\R^d}e^{-2\pi i x\cdot\xi}\,d\mu(x).
\]
For \(0<\alpha<d\), its \(\alpha\)-energy is
\begin{equation}
 I_\alpha(\mu)
 =
 \iint_{\R^d\times\R^d}
 |x-y|^{-\alpha}\,d\mu(x)\,d\mu(y).
\label{eq:riesz-energy}
\end{equation}
Whenever either side is finite, the Fourier representation of the
Riesz energy is
\begin{equation}
 I_\alpha(\mu)
 =
 c_{d,\alpha}
 \int_{\R^d}
 |\wh\mu(\xi)|^2|\xi|^{\alpha-d}\,d\xi.
\label{eq:riesz-energy-fourier}
\end{equation}
For \(\omega\in\Sph^{d-1}\), we denote the scalar projection in the
direction \(\omega\) by
\[
        \pi_\omega(x)=x\cdot\omega.
\]

A probability measure \(\mu\) is called \(s\)-Frostman if
\[
        \mu(B(x,r))\le C_\mu r^s
        \qquad (x\in\R^d,\ r>0).
\]
Such a measure satisfies \(I_\alpha(\mu)<\infty\) for every
\(0<\alpha<s\).

For the proof of Theorem~\ref{thm:euclidean-codim-one}, fix
\[
        d-1<s<\dim_{\mathrm H}(E)
\]
and choose an \(s\)-Frostman probability measure \(\mu\) supported on
\(E\):
\begin{equation}
        \mu(B(x,r))\le C_\mu r^s.
\label{eq:frostman}
\end{equation}
Since \(E\) is compact, \(\supp\mu\subset B(0,R_0)\) for some
\(R_0<\infty\). Choose a nonnegative radial function
\(\phi\in C_c^\infty(B(0,1))\) with integral one, and put
\begin{equation}
 \phi_\eps(x)=\eps^{-d}\phi(x/\eps),
 \qquad
 f_\eps=\mu*\phi_\eps,
 \qquad
 d\mu_\eps(x)=f_\eps(x)\,dx.
\label{eq:mollification}
\end{equation}
The measures \(\mu_\eps\), \(0<\eps<1\), are supported in a fixed
bounded ball.

\subsection{Grassmannians and simplex volume}

For \(1\le h<d\), let \(G(d,h)\) denote the Grassmannian of
\(h\)-dimensional linear subspaces of \(\R^d\), equipped with
the invariant probability measure \(d\gamma(V)\). We write \(P_V\) for
orthogonal projection onto \(V\), and \(d\cH_L^h\) for
\(h\)-dimensional Hausdorff measure on an affine \(h\)-plane \(L\).
The affine Grassmannian is
parametrized by
\[
 A(d,h)
 =
 \{L=a+V:V\in G(d,h),\ a\in V^\perp\},
\]
and is equipped with the invariant measure
\[
        dL=d\gamma(V)\,da.
\]
For an ordered tuple \(Y=(y_0,\ldots,y_h)\), define
\[
 \mathcal V_h(Y)
 =
 \det\!\bigl(
 ((y_i-y_0)\cdot(y_j-y_0))_{1\le i,j\le h}
 \bigr)^{1/2}.
\]
Thus \(\mathcal V_h(Y)\) is \(h!\) times the Euclidean
\(h\)-dimensional volume of the simplex with vertices
\(y_0,\ldots,y_h\). When \(\mathcal V_h(Y)>0\), we write
\[
        \operatorname{Aff}(Y)
        =
        \operatorname{aff}(y_0,\ldots,y_h)
        \in A(d,h).
\]

\subsection{Distance and squared-distance measures}

For an integer \(M\geq1\), write
\[
\operatorname{Sq}(t_1,\ldots,t_M)
=
(t_1^2,\ldots,t_M^2).
\]

\begin{lemma}
\label{lem:squaring-ac}
Let \(\Omega\) be a finite Borel measure supported in a bounded subset
of \([0,\infty)^M\), and let
\[
        \Theta=\operatorname{Sq}_\#\Omega.
\]
If \(\Theta\ll\cL^M\), then \(\Omega\ll\cL^M\).

Moreover, for \(1\leq j<d\), the distance tuples and the
squared-distance tuples of degenerate \(j\)-simplices form
Lebesgue-null subsets of \(\R^{t_j}\), where
\[
        t_j=\binom{j+1}{2}.
\]
\end{lemma}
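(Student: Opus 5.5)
For the first assertion I will use the null-set characterization of absolute continuity and pull null sets back through the squaring map. Fix a Borel set \(N\subset[0,\infty)^M\) with \(\cL^M(N)=0\). The map \(\operatorname{Sq}\) restricted to \([0,\infty)^M\) is injective, with inverse given coordinatewise by the square root. Hence
\[
\Omega(N)=\Omega\bigl(\operatorname{Sq}^{-1}(\operatorname{Sq}(N))\bigr)=\Theta\bigl(\operatorname{Sq}(N)\bigr).
\]
Here \(\operatorname{Sq}(N)\) is Borel, being the preimage of \(N\) under the continuous map \(\sqrt{\cdot}\) on \([0,\infty)^M\). It therefore suffices to show \(\cL^M(\operatorname{Sq}(N))=0\). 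This holds because \(\operatorname{Sq}\) is locally Lipschitz, in fact \(C^1\) on a bounded set, and Lipschitz maps send \(\cL^M\)-null sets to null sets. I only need the boundedness of the support, so I will intersect \(N\) with a large box.

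For the second assertion, work in \(\R^{t_j}\) with coordinates \(s_{ab}\), \(0\le a<b\le j\). These play the role of the squared distances \(|z_a-z_b|^2\). By polarization,
\[
(z_a-z_0)\cdot(z_b-z_0)=\tfrac12\bigl(s_{0a}+s_{0b}-s_{ab}\bigr),
\]
so the edge Gram matrix is a linear function of the \(s\)-coordinates. Its determinant is therefore a polynomial \(P(s)\) on \(\R^{t_j}\). A \(j\)-simplex is degenerate, meaning its vertices are affinely dependent, exactly when this Gram determinant vanishes. So the squared-distance tuples of degenerate simplices lie in the zero set \(\{P=0\}\).

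\(P\) is not identically zero: take a regular simplex, or any affinely independent \(j+1\) points, which exist since \(j<d\). The zero set of a nonzero polynomial is \(\cL^{t_j}\)-null, by the standard induction on the number of variables using Fubini. For ordinary distances, the degenerate distance tuples lie in \(\{t\in[0,\infty)^{t_j}: P(\operatorname{Sq}(t))=0\}\). The function \(P\circ\operatorname{Sq}\) is again a nonzero polynomial, since \(\operatorname{Sq}\) is a nonconstant monomial substitution that cannot kill a nonzero polynomial, so this set is null as well.

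No step is a real obstacle. The only care point is the first part: measurability of \(\operatorname{Sq}(N)\) and the fact that Lipschitz images of null sets are null. Note that the argument needs only that \(\operatorname{Sq}\) maps null sets to null sets on the bounded support, so the zero Jacobian at the boundary faces causes no trouble.
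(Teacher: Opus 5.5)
Your proof is correct and follows the paper's argument essentially step for step. Like the paper, you pull a null set back through the injective squaring map, use that \(\operatorname{Sq}\) is Lipschitz on a bounded box so the image is null, and for the second part write the edge Gram determinant as a nonzero polynomial in the squared edge variables, and in the ordinary ones after substitution. Your measurability step, \(\operatorname{Sq}(N)=(\sqrt{\cdot})^{-1}(N)\), is a slightly cleaner variant of the paper's homeomorphism-on-\(K\) argument.
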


\begin{proof}
Let \(K\subset[0,\infty)^M\) be a bounded cube containing
\(\supp\Omega\), and let \(N\subset\R^M\) be a Borel set with
\(\cL^M(N)=0\). Set
\[
A:=N\cap K.
\]
Since \(\operatorname{Sq}|_K\) is a homeomorphism from \(K\) onto
\(\operatorname{Sq}(K)\), the set \(\operatorname{Sq}(A)\) is Borel.
Moreover, \(\operatorname{Sq}|_K\) is Lipschitz, so
\[
\cL^M\bigl(\operatorname{Sq}(A)\bigr)=0.
\]
Because \(\operatorname{Sq}\) is injective on \([0,\infty)^M\) and
\(\supp\Omega\subset K\),
\[
\operatorname{Sq}^{-1}\bigl(\operatorname{Sq}(A)\bigr)\cap K=A.
\]
Hence
\[
\Omega(N)
=
\Omega(A)
=
\Theta\bigl(\operatorname{Sq}(A)\bigr)
=
0,
\]
and therefore \(\Omega\ll\cL^M\).

For the final assertion, fix one vertex of a \(j\)-simplex and write
\(q_{ab}\) for the squared edge variables. The Gram matrix of the
remaining \(j\) edge vectors has entries
\[
G_{ii}=q_{0i},
\qquad
G_{ih}=\frac{q_{0i}+q_{0h}-q_{ih}}{2}
\quad (i\neq h).
\]
Thus degeneracy is equivalent to
\[
P_j(q):=\det G(q)=0.
\]
The polynomial \(P_j\) is nonzero, since the realization
\(z_0=0\), \(z_i=e_i\) gives \(G=I_j\). Hence the degenerate
squared-distance tuples lie in a Lebesgue-null algebraic hypersurface.

For ordinary edge variables \(r=(r_{ab})_{a<b}\), define
\[
\widetilde P_j(r)
:=
P_j\bigl((r_{ab}^2)_{a<b}\bigr).
\]
This polynomial is also nonzero, and its zero set contains all
degenerate ordinary-distance tuples. Therefore both degenerate loci
have Lebesgue measure zero.
\end{proof}

\subsection{The codimension-one base--apex decomposition}

Throughout Sections~\ref{sec:hulls}--\ref{sec:assembly}, we put
\begin{equation}
        \ell=d-2,
        \qquad
        n=\ell+1=d-1,
        \qquad
        m=\binom d2.
\label{eq:ell-n}
\end{equation}
Thus a codimension-one simplex is written as a base
\[
        Y=(y_0,\ldots,y_\ell)
\]
with \(n\) vertices, together with an apex \(x\).

The cylindrical estimate captures the full vector of \(n\) squared
distances from the apex \(x\) to the vertices of the base \(Y\).
The remaining task is to propagate this star-type control through the
lower-dimensional faces and then reassemble all edge coordinates of the
simplex without losing absolute continuity.

The next three sections carry out this program in three stages.  We
first prove absolute continuity for the distribution of the affine span
of the base.  We then establish an averaged \(L^2\) estimate for the
apex-distance vector over typical affine spans.  Finally, coordinate
marginalization transfers this control to every lower rank, and a
fixed-pin induction assembles these pieces into the complete pinned
squared-distance configuration measure.

\section{The distribution of affine spans}
\label{sec:hulls}

We begin the proof of Theorem~\ref{thm:euclidean-codim-one} by
controlling the affine span of a base distributed according to \(\mu^n\):
\[
Y=(y_0,\ldots,y_\ell).
\]
The affine Blaschke--Petkantschin formula reduces absolute continuity of
the induced affine-span measure to an inverse-square simplex-volume
estimate. The condition \(s>d-1\) ensures that this energy is finite,
uniformly under the mollification fixed in
Section~\ref{sec:euclidean-preliminaries}.

\subsection{Blaschke--Petkantschin formula and inverse volume estimates}

We use the notation of Section~\ref{sec:euclidean-preliminaries} with
\(h=\ell\).

The affine Blaschke--Petkantschin formula
\cite[Chapter~7]{SchneiderWeil} states that, for every nonnegative
measurable \(F\),
\begin{align}
\int_{(\R^d)^n}F(Y)\,dY
&=
c_{d,\ell}
\int_{A(d,\ell)}
\int_{L^n}
F(Y)\mathcal V_\ell(Y)^{d-\ell}\,
d\cH_L^\ell(y_0)\cdots d\cH_L^\ell(y_\ell)\,dL.
\label{eq:BP}
\end{align}
In our codimension-one simplex setting,
\[
d-\ell=2,
\]
and this exponent appears with opposite signs in the two formulas below.

For \(f\geq0\), define the affine \(\ell\)-plane transform
\[
R_\ell f(L)=\int_L f\,d\cH_L^\ell.
\]
Here and below, we interpret \(V_\ell(Y)^{-2}=+\infty\) when
\(V_\ell(Y)=0\). For \(M\geq 1\), apply \((7.1)\) to
\[
F_M(Y)
=
\min\{M,V_\ell(Y)^{-2}\}
\prod_{i=0}^{\ell} f(y_i).
\]
On each affine \(\ell\)-plane \(L\), the degenerate tuples form an
\((\cH^\ell_L)^n\)-null set. Hence, letting \(M\to\infty\) and using
monotone convergence on both sides, we obtain

\begin{align}
\|R_\ell f\|_{L^n(A(d,\ell))}^n
&=
c'_{d,\ell}
\int_{(\R^d)^n}
\mathcal V_\ell(Y)^{-2}
\prod_{i=0}^\ell f(y_i)\,dY.
\label{eq:inverse-BP}
\end{align}
Thus the \(L^n\) norm of the plane transform is governed by an
\emph{inverse} simplex-volume energy.

On the other hand, if
\[
\operatorname{Aff}(Y)
=
\operatorname{aff}(y_0,\ldots,y_\ell),
\]
then \eqref{eq:BP} gives the density of the affine-span pushforward:
\begin{align}
h_f(L)
&=
c_{d,\ell}
\int_{L^n}
\mathcal V_\ell(Y)^2
\prod_{i=0}^\ell f(y_i)\,
d\cH_L^\ell(y_0)\cdots d\cH_L^\ell(y_\ell).
\label{eq:hull-density-main}
\end{align}
The contrast between \eqref{eq:inverse-BP} and
\eqref{eq:hull-density-main} is important.  The factor
\(\mathcal V_\ell^{-2}\) controls concentration near degenerate bases,
while \(\mathcal V_\ell^2\) is the Jacobian appearing in the actual
affine-span distribution.

We now prove the inverse volume estimate needed later.

For an affine \(j\)-plane \(P\) and \(r>0\), write
\[
N_r(P)
=
\{x\in\R^d:\dist(x,P)<r\}.
\]

\begin{lemma}
\label{lem:plane-potential}
Let \(0\leq j\leq d-3\).  There is a constant \(C<\infty\) such that,
for every affine \(j\)-plane \(P\) meeting \(B(0,R_0+1)\),
\begin{equation}
\int \dist(x,P)^{-2}\,d\mu(x)
\leq C.
\label{eq:plane-potential}
\end{equation}
For the mollified measures \(\mu_\eps\) defined in
\eqref{eq:mollification}, the same estimate holds uniformly in \(P\)
and \(0<\eps<1\).
\end{lemma}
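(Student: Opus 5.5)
We will bound the integral by the layer-cake (dyadic) decomposition in the distance to \(P\), and combine it with the \(s\)-Frostman condition \eqref{eq:frostman}, where \(s>d-1\). Write
\[
\int \dist(x,P)^{-2}\,d\mu(x)
\le \mu\bigl(\{\dist(x,P)\ge 1\}\bigr)
+\sum_{i\ge0}2^{2(i+1)}\,\mu\bigl(N_{2^{-i}}(P)\bigr).
\]
The first term is at most \(1\). The sum therefore converges once we prove the tube estimate
\[
\mu\bigl(N_r(P)\cap B(0,R_0)\bigr)\lesssim r^{\,s-j}
\qquad(0<r\le1),
\]
since \(s-j>d-1-(d-3)=2\) for \(j\le d-3\).

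The tube estimate is a covering argument. The set \(N_r(P)\cap B(0,R_0)\) lies within \(r\) of the \(j\)-dimensional disk \(P\cap B(0,R_0+1)\). That disk is covered by \(O(r^{-j})\) balls of radius \(r\), and enlarging each ball to radius \(2r\) covers the tube piece. By \eqref{eq:frostman}, each enlarged ball has \(\mu\)-mass at most \(C_\mu(2r)^s\), so summing gives \(O(r^{s-j})\). The implicit constant depends only on \(d\), \(j\), \(R_0\) and \(C_\mu\); in particular it does not depend on \(P\), since \(P\) meets \(B(0,R_0+1)\) and only its part inside \(B(0,R_0+1)\) matters. When \(j=0\) the plane is a point and this is just the Frostman bound.

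It remains to handle the mollified measures. The measure \(\mu_\eps\) satisfies a Frostman bound with a uniform constant. Indeed, \(\mu_\eps(B(x,r))=\int \mu(B(x-y,r))\phi_\eps(y)\,dy\le C_\mu r^s\), because \(\mu_\eps\) is an average of translates of \(\mu\). Its support lies in \(B(0,R_0+1)\), so the same covering argument applies with \(R_0\) replaced by \(R_0+1\); to cover that case I will take disks of radius \(R_0+2\) in the covering. This makes the estimate uniform in \(0<\eps<1\) and in \(P\).

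I do not expect a serious obstacle. The only delicate point is the exponent bookkeeping: the hypothesis \(j\le d-3\) together with \(s>d-1\) gives \(s-j>2\), which is exactly what makes the geometric series \(\sum_i 2^{2i}\,2^{-i(s-j)}\) converge. The other point to watch is that the covering constant must not depend on the position of \(P\), which the localization to a fixed ball guarantees.
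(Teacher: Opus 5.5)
Your proof is correct. For \(\mu\) itself it is the same as the paper's argument: you cover the tube by \(O(r^{-j})\) balls of radius \(O(r)\) to get \(\mu(N_r(P))\lesssim r^{s-j}\), then sum dyadically, which converges because \(s-j>2\).

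The difference is in the mollified case. The paper splits scales. For \(r\ge\eps\) it compares with an enlarged tube, \(\mu_\eps(N_r(P))\le\mu(N_{r+C\eps}(P))\lesssim r^{s-j}\). For \(r<\eps\) it combines \(\|f_\eps\|_\infty\lesssim\eps^{s-d}\) with the Lebesgue volume of the tube to get \(\mu_\eps(N_r(P))\lesssim\eps^{s-d}r^{d-j}\). It then integrates that small-scale piece separately, obtaining \(\eps^{s-j-2}\lesssim 1\).

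You instead observe that \(\mu_\eps\) is an average of translates of \(\mu\). Hence \(\mu_\eps\) is \(s\)-Frostman with the same constant \(C_\mu\), and the first argument applies verbatim on \(B(0,R_0+1)\). This is shorter and gives the uniform tube bound at every scale in one step. The paper's small-scale estimate is actually sharper, since \(\eps^{s-d}r^{d-j}=r^{s-j}(r/\eps)^{d-s}\) with \(r<\eps\), but that extra decay is never needed.
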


\begin{proof}
Since \(\supp\mu\) is contained in a fixed bounded set, the portion of
\(N_r(P)\) meeting \(\supp\mu\) can be covered by \(O(r^{-j})\) balls
of radius \(O(r)\).  The Frostman estimate \eqref{eq:frostman} therefore
gives
\begin{equation}
\mu(N_r(P))
\lesssim
r^{s-j}.
\label{eq:tube-frostman}
\end{equation}

A dyadic decomposition according to the distance from \(P\) yields
\[
\int \dist(x,P)^{-2}\,d\mu(x)
\lesssim
1+
\sum_{\nu\geq0}
2^{2\nu}\mu(N_{2^{-\nu}}(P)).
\]
Using \eqref{eq:tube-frostman}, the series is bounded by
\[
1+
\sum_{\nu\geq0}
2^{-\nu(s-j-2)}.
\]
Since \(j\leq d-3\),
\[
s-j
\geq
s-d+3
>
2,
\]
and the series converges uniformly in \(P\).

We next consider the mollified measures.  When \(r\geq\eps\),
\[
\mu_\eps(N_r(P))
\leq
\mu\bigl(N_{r+C\eps}(P)\bigr)
\lesssim
r^{s-j}.
\]
For \(r<\eps\), the Frostman condition gives
\[
\|f_\eps\|_\infty
\lesssim
\eps^{s-d},
\]
and hence
\[
\mu_\eps(N_r(P))
\lesssim
\eps^{s-d}r^{d-j}.
\]
Therefore the contribution from scales \(r<\eps\) to the layer-cake
integral is bounded by
\[
\eps^{s-d}
\int_0^\eps r^{d-j-3}\,dr
\lesssim
\eps^{s-j-2}
\lesssim
1,
\]
since \(s-j>2\).  The contribution from \(r\geq\eps\) is uniformly
bounded by the same estimate as above.  This proves the claim.
\end{proof}

The preceding lemma can now be iterated along the Gram--Schmidt
factorization of simplex volume.

\begin{proposition}
\label{prop:inverse-volume}
One has
\begin{equation}
J_\ell(\mu)
:=
\int_{(\R^d)^n}
\mathcal V_\ell(Y)^{-2}\,d\mu^n(Y)
<\infty,
\label{eq:inverse-volume}
\end{equation}
and
\begin{equation}
\sup_{0<\eps<1}
J_\ell(\mu_\eps)
<\infty.
\label{eq:inverse-volume-mollified}
\end{equation}
In particular, \(\mu^n\)-almost every base
\(Y=(y_0,\ldots,y_\ell)\) is affinely independent.
\end{proposition}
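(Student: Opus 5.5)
The plan is to factor the squared volume along Gram--Schmidt and integrate the vertices out one at a time, from last to first, using Lemma~\ref{lem:plane-potential} at each step. For \(Y=(y_0,\ldots,y_\ell)\) and \(1\le i\le\ell\), put \(P_{i-1}=\operatorname{aff}(y_0,\ldots,y_{i-1})\). When \(y_0,\ldots,y_{i-1}\) are affinely independent this is an affine \((i-1)\)-plane. The Gram--Schmidt (base times height) factorization reads
\[
\mathcal V_\ell(Y)^2=\prod_{i=1}^{\ell}\dist(y_i,P_{i-1})^2 .
\]
This is the Euclidean counterpart of the Schur complement identity \(\det G_j=(\det G_{j-1})Q(z)\) used in Lemma~\ref{lem:base-number}. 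If \(y_0,\ldots,y_{i-1}\) are dependent, their affine span is contained in some affine \((i-1)\)-plane \(P\), and \(\dist(y_i,\operatorname{aff})\ge\dist(y_i,P)\). So in every case we may replace \(P_{i-1}\) by an affine \((i-1)\)-plane \(\widetilde P_{i-1}\), chosen measurably in the earlier vertices, and bound
\[
\mathcal V_\ell(Y)^{-2}\le\prod_{i=1}^{\ell}\dist\bigl(y_i,\widetilde P_{i-1}\bigr)^{-2}.
\]
With the convention \(\mathcal V_\ell^{-2}=+\infty\) on degenerate tuples, this inequality holds pointwise. When the earlier vertices are already dependent the factor on the left is infinite, and the same factor on the right is infinite too. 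This is harmless, because the integration below shows the right side is finite almost everywhere.

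Next integrate by Tonelli in the order \(y_\ell,y_{\ell-1},\ldots,y_1\), and finally \(y_0\). At step \(i\), the plane \(\widetilde P_{i-1}\) has dimension \(j=i-1\le\ell-1=d-3\), so Lemma~\ref{lem:plane-potential} applies. The plane passes through \(y_0\in\supp\mu\subset B(0,R_0)\), so it meets \(B(0,R_0+1)\). The lemma then gives
\[
\int\dist(y_i,\widetilde P_{i-1})^{-2}\,d\mu(y_i)\le C,
\]
uniformly in the earlier vertices. Iterating \(\ell\) times yields \(J_\ell(\mu)\le C^{\ell}\), since \(\mu\) is a probability measure.

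For \(\mu_\eps\), the argument is identical. The lemma's bound is uniform in \(0<\eps<1\), and the planes still meet \(B(0,R_0+1)\) because \(\supp\mu_\eps\subset B(0,R_0+1)\). This gives \eqref{eq:inverse-volume-mollified}. Finiteness of \(J_\ell(\mu)\) forces \(\mathcal V_\ell(Y)>0\) for \(\mu^n\)-almost every \(Y\), which is the affine independence claim.

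The main obstacle is the exponent bookkeeping. The last vertex must be measured against a \((d-3)\)-plane, and this is exactly where \(s>d-1\) is needed, via \(s-j>2\); the lemma already encodes this. The remaining care is measurability of the choice \(\widetilde P_{i-1}\). This can be avoided by working directly with the identity on the full-measure set where the earlier vertices are independent, proceeding inductively in \(i\), with each earlier step already showing that the dependent set is null.
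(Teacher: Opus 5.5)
Your proposal is correct and follows the paper's proof essentially step for step. Both arguments use the Gram--Schmidt factorization of $\mathcal V_\ell$, enlarge a degenerate span to an affine $(i-1)$-plane through $y_0$, and integrate out $y_\ell,\ldots,y_1$ by Tonelli using Lemma~\ref{lem:plane-potential}, giving $C^\ell$ uniformly in $\eps$. The only cosmetic difference is that the paper keeps the true affine span in the integrand (truncated at level $M$, then monotone convergence) and picks the enlarged plane only when bounding the inner integral for fixed earlier vertices; this removes the measurability concern you flag without needing your inductive null-set workaround.
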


\begin{proof}
Gram--Schmidt gives the exact factorization
\begin{equation}
\mathcal V_\ell(y_0,\ldots,y_\ell)
=
\prod_{j=1}^\ell
\dist\!\left(
y_j,\operatorname{aff}(y_0,\ldots,y_{j-1})
\right).
\label{eq:GS-volume}
\end{equation}
Hence
\[
\mathcal V_\ell(Y)^{-2}
=
\prod_{j=1}^\ell
\dist\!\left(
y_j,\operatorname{aff}(y_0,\ldots,y_{j-1})
\right)^{-2}.
\]

For \(\nu\in\{\mu,\mu_\eps\}\) and \(M\geq1\), define
\[
J_{\ell,M}(\nu)
:=
\int
\prod_{j=1}^\ell
\min\!\left\{
M,\,
\dist\!\left(
y_j,\operatorname{aff}(y_0,\ldots,y_{j-1})
\right)^{-2}
\right\}
\,d\nu^n(Y).
\]

We integrate successively in \(y_\ell,y_{\ell-1},\ldots,y_1\).
After fixing \(y_0,\ldots,y_{j-1}\), let
\[
P_0=\operatorname{aff}(y_0,\ldots,y_{j-1}).
\]
If \(\dim P_0<j-1\), choose an affine \((j-1)\)-plane \(P\) containing
\(P_0\); otherwise set \(P=P_0\). Since \(P_0\subset P\),
\[
\dist(x,P_0)^{-2}\leq \dist(x,P)^{-2}.
\]
Moreover, \(P\) contains \(y_0\), so it meets the common support ball,
and
\[
j-1\leq \ell-1=d-3.
\]
Lemma~\ref{lem:plane-potential} therefore gives
\[
\int
\min\!\left\{
M,\,
\dist\!\left(
y_j,\operatorname{aff}(y_0,\ldots,y_{j-1})
\right)^{-2}
\right\}
\,d\nu(y_j)
\leq C,
\]
uniformly in the preceding tuple and in \(M\), and also uniformly in
\(\eps\) when \(\nu=\mu_\eps\).

Tonelli's theorem and successive integration now yield
\[
J_{\ell,M}(\nu)\leq C^\ell.
\]
Letting \(M\to\infty\) and applying monotone convergence gives
\[
J_\ell(\mu)<\infty
\]
and
\[
\sup_{0<\eps<1}J_\ell(\mu_\eps)<\infty.
\]
Thus \eqref{eq:inverse-volume} and
\eqref{eq:inverse-volume-mollified} follow.

Finally, \(\mathcal V_\ell(Y)=0\) precisely when the vertices of \(Y\)
are affinely dependent. Since
\(\mathcal V_\ell(Y)^{-2}=+\infty\) on this set, the finiteness of
\(J_\ell(\mu)\) implies
\[
\mu^n\{Y:\mathcal V_\ell(Y)=0\}=0.
\]
Hence \(\mu^n\)-almost every base is nondegenerate.
\end{proof}

\subsection{Absolute continuity of the affine span distribution}

We now study the distribution of affine spans generated by \(\mu^n\).
Fix a linear \(\ell\)-plane \(L_\ast\in A(d,\ell)\) through the origin,
and extend the affine-span map to all tuples by setting
\[
\operatorname{Aff}(Y):=L_\ast
\qquad\text{whenever }\mathcal V_\ell(Y)=0.
\]
On the open set of nondegenerate tuples, this agrees with the usual
affine span. The resulting map
\[
\operatorname{Aff}:(\R^d)^n\longrightarrow A(d,\ell)
\]
is Borel measurable and is continuous at every nondegenerate tuple.
By Proposition~\ref{prop:inverse-volume}, the degenerate locus has
\(\mu^n\)-measure zero, so the chosen value there does not affect the
pushforward measure \(\operatorname{Aff}_\#\mu^n\).

We will prove that \(\operatorname{Aff}_\#\mu^n\) is absolutely continuous
with respect to the invariant measure on \(A(d,\ell)\). The argument
proceeds by mollifying \(\mu\), obtaining uniform \(L^n\) bounds for the
associated \(\ell\)-plane transforms, and then passing to the limit.

\begin{proposition}
\label{prop:hull-ac}
The pushforward
\[
\eta=\operatorname{Aff}_\#\mu^n
\]
is absolutely continuous with respect to the invariant measure \(dL\)
on \(A(d,\ell)\).
\end{proposition}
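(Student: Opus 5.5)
The plan is to approximate $\eta$ by the affine-span pushforwards of the mollified product measures $\mu_\eps^n$, bound their densities uniformly in $L^n(A(d,\ell))$, and pass to a weak limit. For each $0<\eps<1$ put $\eta_\eps=\operatorname{Aff}_\#\mu_\eps^n$. Since $\mu_\eps$ has a density $f_\eps$, the Blaschke--Petkantschin formula \eqref{eq:BP} shows that $\eta_\eps$ has density $h_{f_\eps}$ given by \eqref{eq:hull-density-main}. The goal is the bound
\[
\sup_{0<\eps<1}\|h_{f_\eps}\|_{L^n(A(d,\ell))}<\infty .
\]
Weak compactness in the reflexive space $L^n$ (or just a uniform $L^n$ bound tested against continuous functions) will then give absolute continuity of the weak limit. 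The key problem is that $h_f$ carries the weight $\mathcal V_\ell^{2}$, whereas \eqref{eq:inverse-BP} controls only plane transforms with weight $\mathcal V_\ell^{-2}$.

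The first step is to compare the two weights. The supports lie in a fixed ball, so $\mathcal V_\ell(Y)\lesssim 1$ on every $L^n$, and hence
\[
h_{f}(L)\lesssim (R_\ell f(L))^n \qquad (f\ge0).
\]
This gives $\|h_{f_\eps}\|_{L^1}=1$ trivially, but it only controls $\|h_{f_\eps}\|_{L^{1}}$ by $\|R_\ell f_\eps\|_{L^n}^n$. It does not yet give an $L^n$ bound on $h_{f_\eps}$, so a uniform integrability bound is what is actually needed.

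The second step obtains this uniform integrability. For a Borel set $S\subset A(d,\ell)$, Hölder's inequality gives
\[
\eta_\eps(S)=\int_S h_{f_\eps}\,dL\lesssim\int_S (R_\ell f_\eps)^n\,dL .
\]
I would therefore show that the family $\{(R_\ell f_\eps)^n\}$ is uniformly integrable with respect to $dL$. The tool is a slightly stronger inverse-volume estimate. Since $s>d-1$, Lemma~\ref{lem:plane-potential} holds with the exponent $2$ replaced by $2+\delta$ for some small $\delta>0$, because the series $\sum 2^{-\nu(s-j-2-\delta)}$ still converges. Running the argument of Proposition~\ref{prop:inverse-volume} with this exponent gives
\[
\sup_\eps\int \mathcal V_\ell^{-2-\delta}\,d\mu_\eps^n<\infty .
\]
To convert this into uniform integrability, I would split each $L$ according to whether $\mathcal V_\ell(Y)$ is below or above a threshold $\tau$. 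On the set $\{\mathcal V_\ell\le\tau\}$, the weight satisfies $\mathcal V_\ell^{-2}\le\tau^{\delta}\mathcal V_\ell^{-2-\delta}$, so \eqref{eq:inverse-BP} shows that this part of $\int(R_\ell f_\eps)^n$ is $O(\tau^\delta)$ uniformly in $\eps$. On the complementary set, the weight $\mathcal V_\ell^{2}$ and the weight $\mathcal V_\ell^{-2}$ are comparable up to the factor $\tau^{-4}$.

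A cleaner route applies the same reasoning directly to $h$. I expect to write $\eta_\eps(S)=\int_{\operatorname{Aff}^{-1}(S)}d\mu_\eps^n$ and split according to $\mathcal V_\ell\le\tau$ or $\mathcal V_\ell>\tau$. The small-volume part is $\le \tau^{2}J_\ell(\mu_\eps)\lesssim\tau^2$, since $\mathcal V_\ell^{-2}\ge\tau^{-2}$ there. The large-volume part has density $\le \tau^{-2}\mathcal V_\ell^{4}\cdot(\text{integrand of }\eqref{eq:inverse-BP})$ relative to the inverse-volume integrand on $L$, so it is bounded by $C\tau^{-2}\int_S (R_\ell f_\eps)^n\,dL$. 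It remains to bound this for small $|S|$, and the main obstacle is showing that $(R_\ell f_\eps)^n$ does not concentrate as $\eps\to 0$. I would first try to avoid this issue by proving a uniform $L^{n}$ bound on $R_\ell f_\eps$ restricted to planes with a fixed positive lower bound on the inner volume. Failing that, I would derive a direct bound $\|R_\ell f_\eps\|_{L^{p}}$ with $p>n$ from the $2+\delta$ inverse-volume estimate via a weighted version of \eqref{eq:inverse-BP} with exponent $d-\ell-2-\delta$, if the Blaschke--Petkantschin weights permit it.

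The final step is the limit. The measures $\mu_\eps^n$ converge weakly to $\mu^n$, and $\operatorname{Aff}$ is continuous at nondegenerate tuples, which carry full $\mu^n$-measure by Proposition~\ref{prop:inverse-volume}. Hence $\eta_\eps\to\eta$ weakly. Let $N$ be a $dL$-null set. By outer regularity, choose an open $U\supset N$ with $|U|$ arbitrarily small. The uniform smallness bound $\eta_\eps(U)\le \omega(|U|)$, with $\omega(t)\to0$ as $t\to0$, passes to the limit by the portmanteau theorem, because $U$ is open. This gives $\eta(N)\le\eta(U)\le\omega(|U|)$, so $\eta(N)=0$ and $\eta\ll dL$.
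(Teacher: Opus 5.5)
\textbf{Gap.} Your outer framework matches the paper's. You mollify, use \eqref{eq:hull-density-main} with $\mathcal V_\ell^2\lesssim1$ to get $h_{f_\eps}\lesssim(R_\ell f_\eps)^n$, bound $R_\ell f_\eps$ uniformly in $L^n$ via \eqref{eq:inverse-BP}, and finish with $\eta_\eps\Rightarrow\eta$ and the portmanteau theorem on open sets. You also correctly identify the crux: the family $\{(R_\ell f_\eps)^n\}$ must be uniformly integrable on $A(d,\ell)$. But you never prove this, and the tools you propose cannot.

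\begin{itemize}
\item The volume splitting is circular. The small-volume part is controlled by the inverse-volume estimate, but the large-volume part is again bounded by $C\int_S(R_\ell f_\eps)^n\,dL$, which is where you started. Indeed $\eta_\eps(S)\lesssim\int_S(R_\ell f_\eps)^n\,dL$ holds with no splitting at all.
\item The $\mathcal V_\ell^{-2-\delta}$ estimate is true for $0<\delta<s-d+1$, but it only controls mass near degenerate tuples. It says nothing about $\eta_\eps$ concentrating on a small set of planes spanned by well-separated tuples.
\item An $L^n$ bound, on all planes or on any subfamily, never yields uniform integrability of the $n$th powers.
\item The Blaschke--Petkantschin formula with $n=\ell+1$ points only controls the $n$th power of the plane transform. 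The extra weight $\mathcal V_\ell^{-\delta}$ merely puts $\int_{L^n}\mathcal V_\ell^{-\delta}\prod_i f_\eps(y_i)$ in $L^1(dL)$. That is again an $L^n$-type bound, not an $L^p$ bound with $p>n$.
\end{itemize}

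\textbf{Missing idea.} The paper identifies the limit of $R_\ell f_\eps$ and proves strong convergence. The uniform $L^n$ bound shows that the distribution $R_\ell\mu$ is an $L^n$ function $g$, with $g(V,\cdot)\,da=(P_{V^\perp})_\#\mu$ for $\gamma$-a.e.\ $V$. Because projection commutes with convolution, $R_\ell f_\eps(V,\cdot)=\psi_{\eps,V}*g(V,\cdot)$, where $\psi_{\eps,V}$ is a radial approximate identity on the $2$-plane $V^\perp$; see \eqref{eq:fiber-convolution}. Fiberwise $L^n$ convergence of approximate identities, plus dominated convergence in $V$ with dominating function $2^n\|g(V,\cdot)\|_{L^n(V^\perp)}^n$, gives $R_\ell f_\eps\to g$ strongly in $L^n(A(d,\ell))$. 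Hence $(R_\ell f_\eps)^n\to g^n$ in $L^1$, which is exactly the non-concentration you need.

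With this in hand your final portmanteau step goes through. More directly, $\eta(U)\le\liminf_{\eps\to0}\eta_\eps(U)\le C\int_U g^n\,dL$ for every open $U$, and outer regularity of $dL$ then gives $\eta\ll dL$.
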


\begin{proof}
Let \(f_\eps\) denote the density of \(\mu_\eps\), and set
\[
g_\eps=R_\ell f_\eps.
\]
By \eqref{eq:inverse-BP} and
\eqref{eq:inverse-volume-mollified},
\[
\sup_{0<\eps<1}
\|g_\eps\|_{L^n(A(d,\ell))}
<\infty.
\]
Thus the family \(\{g_\eps\}\) is weakly precompact in
\(L^n(A(d,\ell))\).

We first identify its weak limit.  Define the distribution \(R_\ell\mu\)
by
\begin{equation}
\langle R_\ell\mu,\Phi\rangle
=
\int_{\R^d}
\int_{G(d,\ell)}
\Phi\bigl(V,P_{V^\perp}x\bigr)\,
d\gamma(V)\,d\mu(x),
\qquad
\Phi\in C_c^\infty(A(d,\ell)).
\label{eq:distributional-plane-transform}
\end{equation}
Since \(\mu_\eps\to\mu\) weakly, the distributions \(R_\ell f_\eps\)
converge to \(R_\ell\mu\). Weak compactness therefore identifies the unique distributional limit
with a function
\[
g=R_\ell\mu\in L^n(A(d,\ell)).
\]

Define a finite Borel measure \(\Lambda\) on the affine-Grassmann bundle
by
\[
d\Lambda(V,a)
=
d\gamma(V)\,
d\bigl((P_{V^\perp})_\#\mu\bigr)(a).
\]
By the definition of \(R_\ell\mu\), equation
\eqref{eq:distributional-plane-transform} identifies \(\Lambda\) with
the measure
\[
g(V,a)\,d\gamma(V)\,da.
\]
Uniqueness of disintegration with respect to the projection
\[
(V,a)\longmapsto V
\]
therefore yields
\begin{equation}
g(V,a)\,da
=
(P_{V^\perp})_\#\mu
\qquad
\text{for \(\gamma\)-almost every \(V\)}.
\label{eq:projection-density}
\end{equation}  

For \(\gamma\)-almost every \(V\), equation
\eqref{eq:projection-density} identifies
\[
g(V,\cdot)\,da=(P_{V^\perp})_\#\mu.
\]
Since projection commutes with convolution, we have
\begin{equation}
g_\eps(V,\cdot)
=
\psi_{\eps,V}*_{V^\perp}g(V,\cdot),
\qquad
\psi_{\eps,V}(w)
=
\int_V \phi_\eps(v+w)\,d\cH_V^\ell(v).
\label{eq:fiber-convolution}
\end{equation}
Because \(\phi\) is radial, \(\psi_{\eps,V}\) is an
\(L^1(V^\perp)\)-normalized approximate identity on the
two-dimensional space \(V^\perp\), and its definition is independent
of the choice of orthonormal coordinates on \(V^\perp\).

Hence, for \(\gamma\)-almost every \(V\),
\[
\|g_\eps(V,\cdot)-g(V,\cdot)\|_{L^n(V^\perp)}
\longrightarrow 0.
\]
Moreover, convolution is contractive on \(L^n(V^\perp)\), so
\[
\|g_\eps(V,\cdot)-g(V,\cdot)\|_{L^n(V^\perp)}^n
\leq
2^n\|g(V,\cdot)\|_{L^n(V^\perp)}^n.
\]
Since
\[
\int_{G(d,\ell)}
\|g(V,\cdot)\|_{L^n(V^\perp)}^n\,d\gamma(V)
=
\|g\|_{L^n(A(d,\ell))}^n
<\infty,
\]
dominated convergence gives
\begin{equation}
g_\eps\longrightarrow g
\qquad\text{strongly in }L^n(A(d,\ell)).
\label{eq:strong-radon}
\end{equation}

We now return to the affine-span pushforwards.  Let
\[
\eta_\eps=\operatorname{Aff}_\#\mu_\eps^n.
\]
By \eqref{eq:hull-density-main}, \(\eta_\eps\) has density \(h_\eps\)
given by
\[
h_\eps(L)
=
c_{d,\ell}
\int_{L^n}
\mathcal V_\ell(Y)^2
\prod_{i=0}^\ell f_\eps(y_i)\,
d\cH_L^\ell(y_0)\cdots d\cH_L^\ell(y_\ell).
\]
Since all \(\mu_\eps\) are supported in one fixed bounded ball,
\[
\mathcal V_\ell(Y)^2
\leq C_{d,R_0}
\]
on the relevant support.  Hence
\begin{equation}
0\leq h_\eps(L)
\leq
C_{d,R_0}\,g_\eps(L)^n.
\label{eq:hull-domination}
\end{equation}
By \eqref{eq:strong-radon},
\[
g_\eps^n\longrightarrow g^n
\qquad\text{in }L^1(A(d,\ell)).
\]
Consequently, the family \(\{g_\eps^n\}\) is uniformly integrable, and
\eqref{eq:hull-domination} implies the same for the densities
\(\{h_\eps\}\).

It remains to pass to the limit in the affine-span measures.  Extend
\(\operatorname{Aff}\) arbitrarily on the degenerate locus.  By
Proposition~\ref{prop:inverse-volume}, \(\mu^n\)-almost every tuple is
nondegenerate, and hence \(\operatorname{Aff}\) is continuous at
\(\mu^n\)-almost every point. Since
\[
\mu_\eps^n\Longrightarrow \mu^n,
\]
the continuous mapping theorem for maps that are continuous almost
everywhere with respect to the limiting measure gives
\[
\eta_\eps\Longrightarrow\eta
\]
weakly.

Let
\[
K=
\{a+V\in A(d,\ell): |a|\leq R_0+1\}.
\]
The measure \(\eta\), and all \(\eta_\eps\) for sufficiently small
\(\eps\), are supported in \(K\).

Now let \(N\subset A(d,\ell)\) be Borel with \(dL(N)=0\).  Given
\(\delta>0\), choose an open set \(U\supset N\cap K\), contained in a
slightly larger finite-measure region, such that
\[
dL(U)<\delta.
\]
By Portmanteau,
\[
\eta(N)
\leq
\eta(U)
\leq
\liminf_{\eps\to0}\eta_\eps(U).
\]
Since
\[
\eta_\eps(U)=\int_U h_\eps(L)\,dL
\]
and the family \(\{h_\eps\}\) is uniformly integrable, the right-hand
side tends to zero uniformly as \(dL(U)\to0\).  Hence
\[
\eta(N)=0.
\]
Therefore
\[
\eta\ll dL,
\]
as claimed.
\end{proof}

\section{Averaged \texorpdfstring{\(L^2\)}{L2} estimates for apex distances}
\label{sec:cylindrical}

We next study the distances determined by one additional point and the
fixed \(n\)-tuple \(Y\).  Once the affine span \(L\) of \(Y\) is fixed,
these distances can be encoded by the following family of auxiliary
maps.  For
\(L=a+V\in A(d,\ell)\), let \(W=V^\perp\), so that \(\dim W=2\), and define
\begin{equation}
C_L(x)
=
\bigl(P_Vx,\lvert P_Wx-a\rvert^2\bigr)
\in V\times\R.
\label{eq:cylinder-map}
\end{equation}
We equip \(V\times\R\) with the product measure
\(d\cH_V^\ell(z)\,dr\).  If a pushforward measure below does not admit an
\(L^2\) density, its \(L^2\) norm is understood to be \(+\infty\).

The following estimate controls the measures \((C_L)_\#\mu\) in \(L^2\),
on average over \(L\).  After expanding the \(L^2\) norm, the resulting
kernel is bounded by a constant multiple of the Riesz kernel of order
\(\ell+1=d-1\), matching the dimension hypothesis in
Theorem~\ref{thm:euclidean-codim-one}.

\subsection{An averaged \texorpdfstring{\(L^2\)}{L2} estimate}

\begin{proposition}
\label{prop:cylinder-average}
For every \(R<\infty\),
\begin{equation}
\int_{G(d,\ell)}
\int_{\substack{a\in V^\perp\\ \lvert a\rvert\leq R}}
\bigl\lVert(C_{a+V})_\#\mu\bigr\rVert_{L^2(V\times\R)}^2
\,da\,d\gamma(V)
\lesssim_{d,R}
I_{\ell+1}(\mu).
\label{eq:cylinder-average}
\end{equation}
In particular, \((C_L)_\#\mu\) has an \(L^2\) density for
\(dL\)-almost every \(L\in A(d,\ell)\).
\end{proposition}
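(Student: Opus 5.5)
The plan is to prove \eqref{eq:cylinder-average} first for the mollified measures \(\mu_\eps\) of \eqref{eq:mollification}, where every pushforward has an explicit density, and then pass to the limit \(\eps\to0\). Fix \(L=a+V\) and write \(W=V^\perp\), so that \(\dim W=2\), and write \(x=z+w\) with \(z\in V\), \(w\in W\). For an absolutely continuous measure \(f_\eps\,dx\), the map \(C_L\) acts fibrewise as \((z,w)\mapsto(z,|w-a|^2)\). On the plane \(W\), the pushforward of Lebesgue measure under \(w\mapsto|w-a|^2\) is \(\pi\,dr\), since the disc of radius \(\sqrt r\) has area \(\pi r\). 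Hence \((C_L)_\#\mu_\eps\) has density
\[
F_{L}(z,r)=\tfrac12\int_0^{2\pi}f_\eps\bigl(z+a+\sqrt r\,e(\theta)\bigr)\,d\theta ,
\]
where \(e(\theta)\) runs over the unit circle of \(W\). Squaring and undoing the polar coordinates, I expect to obtain
\[
\|F_L\|_{L^2}^2=\int_V\iint_{W\times W}f_\eps(z+w)f_\eps(z+w')\,\delta\bigl(|w-a|^2-|w'-a|^2\bigr)\,dw\,dw'\,dz .
\]
This is to be read via the coarea formula, not as a formal delta.

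Next I integrate in the offset \(a\in W\) with \(|a|\le R\). For fixed \(w\ne w'\), the condition \(|w-a|^2=|w'-a|^2\) says that \(a\) lies on the perpendicular bisector line of \(w,w'\) in \(W\). The constraint is linear in \(a\) with gradient of size \(2|w-w'|\). Therefore
\[
\int_{|a|\le R}\delta\bigl(|w-a|^2-|w'-a|^2\bigr)\,da\le\frac{2R}{2|w-w'|}\lesssim_R|w-w'|^{-1}.
\]
This is the Euclidean analogue of the perpendicular-bisector linearization in Proposition~\ref{prop:distance-energy}. After substituting \(x=z+w\) and \(u=w'-w\in W\), the \(a\)-integrated norm is bounded by
\[
\lesssim_R\int_{\R^d}\int_{W}f_\eps(x)f_\eps(x+u)\,|u|^{-1}\,du\,dx .
\]

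The third step is the Grassmannian average, which I expect to be the key computation. For \(h\ge0\), averaging Lebesgue measure on the random two-plane \(W=V^\perp\), with \(V\sim\gamma\), gives
\[
\int_{G(d,\ell)}\int_{V^\perp}h(u)\,du\,d\gamma(V)=c_d\int_{\R^d}h(u)\,|u|^{2-d}\,du .
\]
This follows from polar coordinates and rotation invariance: both sides are rotation invariant and homogeneous, and the radial weights match. Applying it with \(h(u)=f_\eps(x)f_\eps(x+u)|u|^{-1}\), the kernel becomes \(|u|^{-1}|u|^{-(d-2)}=|u|^{-(d-1)}\). Hence the left-hand side of \eqref{eq:cylinder-average} for \(\mu_\eps\) is
\[
\lesssim_{d,R}I_{d-1}(\mu_\eps)=I_{\ell+1}(\mu_\eps).
\]
Since \(\phi\) is a probability density, \(I_\alpha(\mu_\eps)=\iint(|\cdot|^{-\alpha}*\phi_\eps*\tilde\phi_\eps)(x-y)\,d\mu\,d\mu\), where \(\tilde\phi_\eps(x)=\phi_\eps(-x)\). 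Because \(|\cdot|^{-\alpha}*\psi\lesssim|\cdot|^{-\alpha}\) for radial compactly supported probability densities \(\psi\) (superharmonicity, or a direct estimate), it follows that \(I_{\ell+1}(\mu_\eps)\lesssim I_{\ell+1}(\mu)\) uniformly in \(\eps\). The right-hand side is finite because \(s>d-1\).

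Finally, I pass to the limit. For each fixed \(L\), the map \(C_L\) is continuous, so \((C_L)_\#\mu_\eps\to(C_L)_\#\mu\) weakly. The \(L^2\) norm is lower semicontinuous under weak convergence: it is the supremum of \(\int\psi\,d\nu\) over test functions \(\psi\in C_c\) with \(\|\psi\|_{L^2}\le1\), and this supremum is \(+\infty\) if \(\nu\) has no \(L^2\) density. Hence
\[
\|(C_L)_\#\mu\|_{L^2}^2\le\liminf_{\eps\to0}\|(C_L)_\#\mu_\eps\|_{L^2}^2 .
\]
Fatou's lemma in \((V,a)\) then yields \eqref{eq:cylinder-average}. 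The almost-everywhere statement follows by taking \(R\to\infty\) through the integers.

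The main obstacle is making the delta-function manipulations rigorous. I would handle this by replacing \(\delta\) with \(\kappa^{-1}\1_{[0,\kappa]}\), which is legitimate because \(f_\eps\) is smooth and bounded, and then letting \(\kappa\to0\). The bisector bound holds uniformly in \(\kappa\): the set of \(a\) with \(\bigl|2a\cdot(w'-w)+|w|^2-|w'|^2\bigr|\le\kappa\) is a strip of width \(\kappa/(2|w-w'|)\) meeting the disc \(\{|a|\le R\}\) in area \(\lesssim R\kappa/|w-w'|\).
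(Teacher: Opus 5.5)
Your argument is correct, but it is organized differently from the paper's.

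\textbf{How the paper argues.} The paper never mollifies $\mu$. Instead it convolves the pushforward $(C_L)_\#\mu$ with an approximate identity $\Phi_{\delta,V}$ on $V\times\R$. The resulting pair kernel is $K_\delta^{(\ell)}(P_Vu)\,K_\delta^{(1)}\bigl(2P_Wu\cdot(P_W\bar x-a)\bigr)$. Its offset integral is the same strip/bisector bound as yours. The Grassmannian factor, however, comes from a small-ball estimate, $\gamma\{V:|P_Vu|\lesssim\delta\}\lesssim(\delta/|u|)^\ell$, obtained from the $\operatorname{Beta}(\ell/2,1)$ law of $|P_Vu|^2/|u|^2$. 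A separate trivial bound handles the near-diagonal regime $|u|\lesssim\delta$.

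\textbf{What your route changes.} You mollify on the source side, so you have the exact density of $(C_L)_\#\mu_\eps$. The $V$-components are then forced to coincide exactly. The Grassmannian step becomes the exact polar identity $\int_{G(d,\ell)}\int_{V^\perp}h\,du\,d\gamma(V)=c_d\int_{\R^d}h(u)|u|^{2-d}\,du$, with no case split and an explicit constant.

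\textbf{What it costs.} You need two steps the paper avoids. The first is a uniform comparison $I_{d-1}(\mu_\eps)\lesssim I_{d-1}(\mu)$. The second is lower semicontinuity under weak convergence $\mu_\eps\to\mu$ with $L$ fixed, which you handle correctly. The paper's target-side smoothing also directly produces the Borel functions $A_\nu(L)$, which it reuses to define the good set of planes in Corollary~\ref{cor:good-bases}. In your version the analogous Borel set is $\{L:\liminf_\nu\|(C_L)_\#\mu_{\eps_\nu}\|_2<\infty\}$. It has full measure by Fatou and consists of planes with $L^2$ pushforward by lower semicontinuity, so it serves equally well.

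\textbf{One correction.} Superharmonicity does not justify $|\cdot|^{-(d-1)}*\psi\lesssim|\cdot|^{-(d-1)}$. Indeed, $\Delta|x|^{-\alpha}=\alpha(\alpha+2-d)|x|^{-\alpha-2}$ is positive for $\alpha=d-1>d-2$, so this kernel is subharmonic away from the origin. Your ``direct estimate'' alternative does work: split into $|x|\ge4\eps$ and $|x|<4\eps$, using $\|\phi_\eps*\phi_\eps\|_\infty\lesssim\eps^{-d}$ in the second case. Simpler still is the Fourier representation \eqref{eq:riesz-energy-fourier}. Since $|\wh\phi|\le1$, it gives $I_\alpha(\mu_\eps)=c_{d,\alpha}\int|\wh\mu(\xi)|^2|\wh\phi(\eps\xi)|^2|\xi|^{\alpha-d}\,d\xi\le I_\alpha(\mu)$ with constant one.
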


\begin{proof}
We argue by regularization.  Let \(\Phi_{\delta,V}\) be a nonnegative,
compactly supported, \(L^1\)-normalized product approximate identity on
\(V\times\R\).  We use radial kernels in the \(V\)-variable, so the
definition is independent of the choice of orthonormal coordinates on
\(V\) and depends measurably on \(V\).  Its autocorrelation kernels
satisfy
\begin{equation}
K_\delta^{(\ell)}(z)
\lesssim
\delta^{-\ell}\mathbf1_{\{\lvert z\rvert\lesssim\delta\}},
\qquad
K_\delta^{(1)}(t)
\lesssim
\delta^{-1}\mathbf1_{\{\lvert t\rvert\lesssim\delta\}}.
\label{eq:kernel-bounds}
\end{equation}

Fix \(x,x'\in\R^d\), and write
\[
u=x-x',
\qquad
\bar x=\frac{x+x'}2.
\]
Expanding the smoothed \(L^2\) norm produces the pair kernel
\begin{equation}
K_\delta^{(\ell)}(P_Vu)\,
K_\delta^{(1)}
\!\left(
2P_Wu\cdot(P_W\bar x-a)
\right).
\label{eq:cylinder-collision}
\end{equation}
We estimate its average in \(V\) and \(a\).

Suppose first that \(\lvert u\rvert>C\delta\).  On the support of the
first factor in \eqref{eq:cylinder-collision},
\[
\lvert P_Vu\rvert\lesssim\delta,
\qquad
\lvert P_Wu\rvert\approx\lvert u\rvert.
\]
The relevant Grassmannian estimate is
\begin{equation}
\gamma\left\{
V\in G(d,\ell):
\lvert P_Vu\rvert\lesssim\delta
\right\}
\lesssim
\left(\frac{\delta}{\lvert u\rvert}\right)^\ell.
\label{eq:crofton}
\end{equation}
Indeed, for \(V\) distributed according to \(d\gamma\), rotational
invariance shows that the random variable
\[
\frac{\lvert P_Vu\rvert^2}{\lvert u\rvert^2}
\]
has the \(\operatorname{Beta}(\ell/2,1)\) distribution, since
\(d-\ell=2\).  Therefore its distribution function at
\((C\delta/\lvert u\rvert)^2\) is bounded by a constant multiple of
\((\delta/\lvert u\rvert)^\ell\), proving \eqref{eq:crofton}.

We next integrate in the offset variable.  For every \(V\) satisfying
the support condition above,
\begin{equation}
\int_{\substack{a\in W\\ \lvert a\rvert\leq R}}
K_\delta^{(1)}
\!\left(
2P_Wu\cdot(P_W\bar x-a)
\right)\,da
\lesssim_R
\lvert u\rvert^{-1}.
\label{eq:offset-coarea}
\end{equation}
To see this directly, set
\[
w=P_Wu,
\qquad
e=\frac{w}{\lvert w\rvert},
\]
and write
\[
a=\tau e+b,
\qquad
b\in W\cap e^\perp.
\]
Since \(\|K_\delta^{(1)}\|_1=1\), integrating first in \(\tau\) gives
\[
\int_{\lvert a\rvert\leq R}
K_\delta^{(1)}
\!\left(
2w\cdot(P_W\bar x-a)
\right)\,da
\leq
\frac{2R}{2\lvert w\rvert}
\|K_\delta^{(1)}\|_1
\lesssim_R
\lvert w\rvert^{-1}.
\]
On the support under consideration,
\(\lvert w\rvert\approx\lvert u\rvert\), and hence
\eqref{eq:offset-coarea} follows.

Combining \eqref{eq:kernel-bounds},
\eqref{eq:crofton}, and \eqref{eq:offset-coarea}, the average of
\eqref{eq:cylinder-collision} is bounded by
\begin{equation}
C_R
\lvert u\rvert^{-\ell}
\lvert u\rvert^{-1}
=
C_R
\lvert u\rvert^{-(\ell+1)}.
\label{eq:far-collision}
\end{equation}

It remains to consider the near-diagonal regime
\(\lvert u\rvert\leq C\delta\).  There we simply use the trivial
bounds in \eqref{eq:kernel-bounds}.  After integration in \(V\) and
\(a\), this gives
\[
C_R\delta^{-(\ell+1)}
\lesssim
C_R\lvert u\rvert^{-(\ell+1)}.
\]
Thus the average of the kernel in \eqref{eq:cylinder-collision}, over \(V\)
and \(a\), is \(O_R(\lvert u\rvert^{-(\ell+1)})\).

Fubini's theorem, together with
\eqref{eq:far-collision} and the near-diagonal estimate above,
therefore yields, uniformly in \(\delta\),
\[
\int_{G(d,\ell)}
\int_{\lvert a\rvert\leq R}
\bigl\|
(C_{a+V})_\#\mu*\Phi_{\delta,V}
\bigr\|_2^2
\,da\,d\gamma(V)
\lesssim_R
\iint
\lvert x-x'\rvert^{-(\ell+1)}
\,d\mu(x)\,d\mu(x').
\]
The right-hand side is precisely \(I_{\ell+1}(\mu)\), which is finite
because
\[
\ell+1=d-1<s.
\]

Now choose a sequence \(\delta_\nu\downarrow0\). For
\(L=a+V\in A(d,\ell)\), set
\[
A_\nu(L)
:=
\bigl\|
(C_L)_\#\mu*\Phi_{\delta_\nu,V}
\bigr\|_{L^2(V\times\mathbb R)}^2.
\]
The approximate identity and weak lower
semicontinuity give
\[
\bigl\|(C_L)_\#\mu\bigr\|_{L^2(V\times\mathbb R)}^2
\leq
\liminf_{\nu\to\infty}A_\nu(L),
\]
where the left-hand side is interpreted as \(+\infty\) when
\((C_L)_\#\mu\) does not admit an \(L^2\) density.

Consequently, Fatou's lemma yields
\[
\begin{aligned}
&\int_{G(d,\ell)}
\int_{\substack{a\in V^\perp\\ |a|\leq R}}
\bigl\|
(C_{a+V})_\#\mu
\bigr\|_{L^2(V\times\mathbb R)}^2
\,da\,d\gamma(V)
\\
&\qquad\leq
\int_{G(d,\ell)}
\int_{\substack{a\in V^\perp\\ |a|\leq R}}
\liminf_{\nu\to\infty}A_\nu(a+V)
\,da\,d\gamma(V)
\\
&\qquad\leq
\liminf_{\nu\to\infty}
\int_{G(d,\ell)}
\int_{\substack{a\in V^\perp\\ |a|\leq R}}
A_\nu(a+V)
\,da\,d\gamma(V)
\\
&\qquad\lesssim_{d,R}
I_{\ell+1}(\mu).
\end{aligned}
\]
This proves \eqref{eq:cylinder-average}. Taking \(R\) through the
positive integers shows that \((C_L)_\#\mu\) has an \(L^2\) density
for \(dL\)-almost every \(L\in A(d,\ell)\).  
\end{proof}

\subsection{From the auxiliary map to apex distances}

We next show that, for a nondegenerate base \(Y\), the map \(C_L\)
determines the full vector of squared distances from an additional point
to the vertices of \(Y\).

Let
\[
Y=(y_0,\ldots,y_\ell)
\]
be a nondegenerate base, and write
\[
L=\operatorname{Aff}(Y)=a+V,
\qquad
y_i=a+b_i,
\qquad
b_i\in V.
\]
Define the squared apex-distance map
\begin{equation}
S_Y(x)
=
\bigl(
\lvert x-y_0\rvert^2,\ldots,
\lvert x-y_\ell\rvert^2
\bigr)
\in\R^n.
\label{eq:apex-map-main}
\end{equation}

The next lemma shows that \(C_L(x)\) determines \(S_Y(x)\).

\begin{lemma}
\label{lem:cylinder-star}
There exists a global \(C^\infty\) diffeomorphism
\[
T_Y:V\times\R\longrightarrow\R^n
\]
such that
\begin{equation}
S_Y=T_Y\circ C_L.
\label{eq:star-factor}
\end{equation}
Its Jacobian is constant and nonzero, with
\begin{equation}
\lvert\det DT_Y\rvert
=
2^\ell\mathcal V_\ell(Y).
\label{eq:star-jacobian}
\end{equation}
\end{lemma}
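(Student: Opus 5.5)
We compute $S_Y$ directly in coordinates adapted to $L$ and exhibit $T_Y$ as an explicit affine-plus-quadratic map that is triangular in a suitable sense. Write $x=P_Vx+P_Wx$ with $W=V^\perp$. Since $y_i=a+b_i$ with $a\in W$ and $b_i\in V$, orthogonality gives
\[
\lvert x-y_i\rvert^2=\lvert P_Vx-b_i\rvert^2+\lvert P_Wx-a\rvert^2 .
\]
With $(z,r)=C_L(x)=(P_Vx,\lvert P_Wx-a\rvert^2)$, this becomes $\lvert x-y_i\rvert^2=\lvert z-b_i\rvert^2+r$. We therefore define
\[
T_Y(z,r)=\bigl(\lvert z-b_0\rvert^2+r,\ldots,\lvert z-b_\ell\rvert^2+r\bigr),
\]
which is $C^\infty$ on $V\times\R$ and satisfies $S_Y=T_Y\circ C_L$ by construction.

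To see that $T_Y$ is a global diffeomorphism, we subtract the first coordinate from the others. For $1\le i\le\ell$,
\[
\lvert z-b_i\rvert^2-\lvert z-b_0\rvert^2=-2z\cdot(b_i-b_0)+\lvert b_i\rvert^2-\lvert b_0\rvert^2 .
\]
This is affine in $z$, and the quadratic term and $r$ cancel. So $T_Y=A\circ\Psi$, where $A$ is the invertible linear map $(t_0,\ldots,t_\ell)\mapsto(t_0,t_1-t_0,\ldots,t_\ell-t_0)$ inverted, i.e. $A^{-1}$ is this map, and
\[
\Psi(z,r)=\Bigl(\lvert z-b_0\rvert^2+r,\ \bigl(-2z\cdot(b_i-b_0)+c_i\bigr)_{1\le i\le\ell}\Bigr).
\]
Nondegeneracy of $Y$ means that $b_1-b_0,\ldots,b_\ell-b_0$ form a basis of the $\ell$-dimensional space $V$. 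Hence $z\mapsto(-2z\cdot(b_i-b_0)+c_i)_i$ is an affine bijection $V\to\R^\ell$. Given the last $\ell$ coordinates, $z$ is uniquely and smoothly determined, and then $r=t_0-\lvert z-b_0\rvert^2$ is as well. This gives a smooth global inverse, so $T_Y$ is a diffeomorphism.

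For the Jacobian we work in an orthonormal basis of $V$. Since $\det A^{-1}=1$, we have $\lvert\det DT_Y\rvert=\lvert\det D\Psi\rvert$. The matrix $D\Psi$ has first row $(2(z-b_0)^T,1)$, and its remaining rows are $(-2(b_i-b_0)^T,0)$. Expanding along the last column, which has a single entry $1$, gives
\[
\lvert\det D\Psi\rvert=2^\ell\,\bigl\lvert\det\bigl[b_1-b_0,\ldots,b_\ell-b_0\bigr]\bigr\rvert .
\]
The square of this determinant is the Gram determinant $\det((b_i-b_0)\cdot(b_j-b_0))=\mathcal V_\ell(Y)^2$, because $b_i-b_j=y_i-y_j$. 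This yields \eqref{eq:star-jacobian}, which is constant in $(z,r)$ and nonzero.

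There is no real obstacle. The only point needing care is confirming that the quadratic dependence on $z$ does not spoil global injectivity, and the subtraction step removes it. We also need to note that the sign or normalization conventions (the factor $2$, the identification of $V$ with $\R^\ell$ via $\cH_V^\ell$) match the measure $d\cH_V^\ell\,dr$ used for $C_L$.
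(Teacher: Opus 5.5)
Your proof is correct and follows essentially the same route as the paper: you use the same explicit map $T_Y(z,r)=(\lvert z-b_i\rvert^2+r)_{i=0}^\ell$, the Pythagorean identity for the factorization, and subtraction of the zeroth coordinate to make the system affine in $z$ and recover $(z,r)$. Your factorization $T_Y=A\circ\Psi$ with $\det A=1$ is the paper's row reduction and gives $\lvert\det DT_Y\rvert=2^\ell\mathcal V_\ell(Y)$; the only cosmetic omission is that you should define $c_i=\lvert b_i\rvert^2-\lvert b_0\rvert^2$ explicitly.
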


\begin{proof}
For \(z\in V\) and \(r\in\R\), define
\[
T_Y(z,r)
=
\bigl(
\lvert z-b_i\rvert^2+r
\bigr)_{i=0}^\ell.
\]
Since \(V\perp V^\perp\), the Pythagorean identity gives immediately
\[
S_Y=T_Y\circ C_L,
\]
which is \eqref{eq:star-factor}.

To prove that \(T_Y\) is globally invertible, subtract the zeroth
coordinate from the others:
\begin{equation}
T_Y(z,r)_i-T_Y(z,r)_0
=
-2z\cdot(b_i-b_0)
+\lvert b_i\rvert^2-\lvert b_0\rvert^2.
\label{eq:recover-z}
\end{equation}
Because the base \(Y\) is nondegenerate, the vectors
\[
b_1-b_0,\ldots,b_\ell-b_0
\]
form a basis of \(V\).  Hence the \(\ell\) differences in
\eqref{eq:recover-z} determine \(z\) uniquely by a linear system.  Once
\(z\) is known, the zeroth coordinate determines
\[
r
=
T_Y(z,r)_0-\lvert z-b_0\rvert^2.
\]
Thus \(T_Y\) is globally one-to-one and onto, with smooth inverse.

Finally, subtracting the zeroth row of \(DT_Y\) from each of the
remaining rows reduces the Jacobian determinant to the determinant of
the matrix with rows
\[
-2(b_i-b_0),
\qquad
1\leq i\leq \ell.
\]
Taking absolute values gives
\[
\lvert\det DT_Y\rvert
=
2^\ell\mathcal V_\ell(Y),
\]
which proves \eqref{eq:star-jacobian}.
\end{proof}

We can now transfer the averaged estimate to the apex-distance map.

\begin{corollary}
\label{cor:good-bases}
For $\mu^n$-almost every base $Y$,
\begin{equation}
\frac{d(S_Y)_\#\mu}{dt}\in L^2(\R^n),
\qquad\text{and hence}\qquad
(S_Y)_\#\mu\ll\cL^n.
\label{eq:apex-ac}
\end{equation}
\end{corollary}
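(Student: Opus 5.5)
The plan is to combine the three ingredients already in place: almost-sure nondegeneracy of bases (Proposition~\ref{prop:inverse-volume}), absolute continuity of the affine-span law $\eta=\operatorname{Aff}_\#\mu^n$ (Proposition~\ref{prop:hull-ac}), and the averaged cylindrical estimate (Proposition~\ref{prop:cylinder-average}). The chain is: good planes $\Rightarrow$ good bases $\Rightarrow$ $L^2$ star measures via Lemma~\ref{lem:cylinder-star}.

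First, let $\mathcal B\subset A(d,\ell)$ be the set of planes $L$ for which $(C_L)_\#\mu$ fails to have an $L^2(V\times\R)$ density. One should check this set is Borel, or at least contained in a Borel $dL$-null set. Measurability follows by writing the $L^2$ norm as $\liminf_\nu A_\nu(L)$, which is a measurable function of $L$: this is a valid formula for the norm (lower semicontinuity in one direction, and convergence of mollifications when the density is in $L^2$ in the other). By Proposition~\ref{prop:cylinder-average}, $dL(\mathcal B)=0$. Since $\eta\ll dL$ by Proposition~\ref{prop:hull-ac}, we get $\eta(\mathcal B)=0$, that is, $\mu^n\{Y:\operatorname{Aff}(Y)\in\mathcal B\}=0$. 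Removing in addition the $\mu^n$-null degenerate locus (Proposition~\ref{prop:inverse-volume}), we obtain a full-measure set of nondegenerate bases $Y$ whose span $L=\operatorname{Aff}(Y)$ has $(C_L)_\#\mu\in L^2$.

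Second, for such $Y$, Lemma~\ref{lem:cylinder-star} gives $(S_Y)_\#\mu=(T_Y)_\#\bigl((C_L)_\#\mu\bigr)$. Here $T_Y$ is a global diffeomorphism with constant Jacobian $2^\ell\mathcal V_\ell(Y)>0$. If $g$ is the density of $(C_L)_\#\mu$, the change of variables formula gives the density $g\circ T_Y^{-1}\,/\,(2^\ell\mathcal V_\ell(Y))$ for $(S_Y)_\#\mu$. Its $L^2$ norm squared is $\|g\|_2^2/(2^\ell\mathcal V_\ell(Y))$, which is finite. Absolute continuity with respect to $\cL^n$ follows immediately.

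The only delicate point is the measurability of $\mathcal B$ and the identification of the parametrisation. Specifically, Proposition~\ref{prop:cylinder-average} integrates over $(V,a)$ with $a\in V^\perp$, exactly matching $dL=d\gamma(V)\,da$. One also needs that $C_L$ depends only on $L$, not on the representation: this holds because $a$ is the unique point of $L\cap V^\perp$. I expect no real obstacle beyond stating this measurability cleanly.
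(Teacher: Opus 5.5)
Your proposal is correct and follows essentially the same route as the paper. The paper also makes the good set of planes Borel by defining it through $\liminf_\nu A_\nu(L)<\infty$, combines Proposition~\ref{prop:cylinder-average} with $\operatorname{Aff}_\#\mu^n\ll dL$ and almost-sure nondegeneracy, and then pushes the $L^2$ density through $T_Y$ using the identical computation $\|g_Y\|_2^2=\|f_L\|_2^2/(2^\ell\mathcal V_\ell(Y))$; its only extra remark is that $|a|\le R$ for spans of bases in $(\supp\mu)^n$, which your appeal to the full-$dL$-measure conclusion of Proposition~\ref{prop:cylinder-average} makes unnecessary.
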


\begin{proof}
Fix a sequence \(\delta_\nu\downarrow0\), and for
\(L=a+V\in A(d,\ell)\), define
\[
A_\nu(L)
:=
\bigl\|
(C_L)_\#\mu*\Phi_{\delta_\nu,V}
\bigr\|_{L^2(V\times\mathbb R)}^2.
\]
For each \(\nu\), the map
\[
L\longmapsto A_\nu(L)
\]
is Borel measurable. Indeed, expanding the square as in
\eqref{eq:cylinder-collision} expresses \(A_\nu(L)\) as the integral,
with respect to \(d\mu(x)\,d\mu(x')\), of a bounded Borel function of
\((L,x,x')\).

Define
\[
A_{\mathrm{good}}
:=
\left\{
L\in A(d,\ell):
\liminf_{\nu\to\infty}A_\nu(L)<\infty
\right\}.
\]
Then \(A_{\mathrm{good}}\) is Borel. By the approximate
\(L^2\) identity,
\[
L\in A_{\mathrm{good}}
\]
if and only if \((C_L)_\#\mu\) has an
\(L^2(V\times\mathbb R)\) density. Proposition~\ref{prop:cylinder-average}
therefore implies that \(A_{\mathrm{good}}\) has full \(dL\)-measure
in every bounded offset region. By Proposition~\ref{prop:cylinder-average},
\(A_{\mathrm{good}}\) has full \(dL\)-measure in every bounded offset
region.

Since \(\supp\mu\subset B(0,R)\), whenever
\(Y\in(\supp\mu)^n\) is nondegenerate and
\[
L=\Aff(Y)=a+V,
\]
we have \(y_0\in L\), and therefore
\[
|a|=\dist(0,L)\leq |y_0|\leq R.
\]

Moreover, Proposition~\ref{prop:hull-ac} gives
\[
\Aff_\#\mu^n\ll dL.
\]
It follows that
\[
\Aff(Y)\in A_{\mathrm{good}}
\]
for \(\mu^n\)-almost every \(Y\). Independently,
Proposition~\ref{prop:inverse-volume} shows that
\(\mu^n\)-almost every base is nondegenerate. Hence, for
\(\mu^n\)-almost every \(Y\), both
\[
(C_{\Aff(Y)})_\#\mu
\]
has an \(L^2\) density and
\[
\mathcal V_\ell(Y)>0.
\]

Fix such a base \(Y\), and write
\[
L=\Aff(Y)=a+V.
\]
By Lemma~\ref{lem:cylinder-star},
\[
S_Y=T_Y\circ C_L,
\]
where
\[
T_Y:V\times\R\longrightarrow\R^n
\]
is a smooth diffeomorphism with constant Jacobian
\[
|\det DT_Y|=2^\ell\mathcal V_\ell(Y)>0.
\]

Let \(f_L\in L^2(V\times\R)\) be the density of \((C_L)_\#\mu\).
Since
\[
(S_Y)_\#\mu
=
(T_Y)_\#\bigl((C_L)_\#\mu\bigr),
\]
the measure \((S_Y)_\#\mu\) has density
\[
g_Y(t)
=
\frac{f_L(T_Y^{-1}t)}
     {2^\ell\mathcal V_\ell(Y)}.
\]
Using the change of variables \(t=T_Y(z,r)\), we obtain
\[
\begin{aligned}
\|g_Y\|_{L^2(\R^n)}^2
&=
\frac{1}{\bigl(2^\ell\mathcal V_\ell(Y)\bigr)^2}
\int_{\R^n}
\bigl|f_L(T_Y^{-1}t)\bigr|^2\,dt \\
&=
\frac{1}{2^\ell\mathcal V_\ell(Y)}
\int_{V\times\R}|f_L(z,r)|^2\,dz\,dr \\
&=
\frac{1}{2^\ell\mathcal V_\ell(Y)}
\|f_L\|_{L^2(V\times\R)}^2
<\infty.
\end{aligned}
\]
Therefore \((S_Y)_\#\mu\) has an \(L^2(\R^n)\) density for
\(\mu^n\)-almost every base \(Y\). This proves
\eqref{eq:apex-ac}.
\end{proof}

\section{The pinned codimension-one configuration measure (Theorem \ref{thm:euclidean-codim-one})}
\label{sec:assembly}

We now pass from the full-rank star estimate in
Corollary~\ref{cor:good-bases} to complete pinned simplex
configurations.  The first step takes coordinate marginals of the
full-rank star measure.  The second uses Fubini to choose one common
set of pins and then adds the remaining vertices one at a time.

For \(1\leq j\leq n\) and
\[
Y=(y_0,\ldots,y_{j-1})\in(\R^d)^j,
\]
define
\[
S_Y^{(j)}(z)
=
\bigl(
\lvert z-y_0\rvert^2,\ldots,\lvert z-y_{j-1}\rvert^2
\bigr)
\in\R^j
\]
and
\[
\rho_Y^{(j)}=(S_Y^{(j)})_\#\mu.
\]
Thus \(S_Y^{(n)}=S_Y\) in the notation of
\eqref{eq:apex-map-main}.

\begin{lemma}[]
\label{lem:codim-marginal-descent}
For every $1\le j\le n$, there exists a Borel set
\[
G_j\subset (\operatorname{supp}\mu)^j,
\qquad
\mu^j(G_j)=1,
\]
such that $\rho_Y^{(j)}$ has an $L^2(\mathbb R^j)$ density for every
$Y\in G_j$.
\end{lemma}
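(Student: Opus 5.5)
The plan is to derive every rank \(j\) from the full-rank case \(j=n\) given by Corollary~\ref{cor:good-bases}. We take coordinate marginals of the full-rank star measure and use Fubini to pass from a full-measure set of \(n\)-tuples to a full-measure set of \(j\)-tuples.

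\emph{Step 1: the case \(j=n\).} By Corollary~\ref{cor:good-bases}, the set of \(Y\in(\R^d)^n\) for which \(\rho^{(n)}_Y=(S_Y)_\#\mu\) has an \(L^2(\R^n)\) density has full \(\mu^n\)-measure. I will show this set is Borel, or at least that it contains a Borel set of full measure. The approach is the one used in the proof of Corollary~\ref{cor:good-bases}: the set equals \(\{Y:\liminf_\nu\|\rho^{(n)}_Y*\Phi_{\delta_\nu}\|_2^2<\infty\}\). Each mollified norm is the \(d\mu(x)\,d\mu(x')\)-integral of a bounded Borel function of \((Y,x,x')\), hence Borel in \(Y\). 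Intersecting with \((\supp\mu)^n\) gives a Borel set \(G_n\) with \(\mu^n(G_n)=1\). The same description shows that, for each \(j\), the set
\[
G_j:=\{Y\in(\supp\mu)^j:\ \rho^{(j)}_Y \text{ has an } L^2(\R^j)\text{ density}\}
\]
is Borel. It then only remains to prove \(\mu^j(G_j)=1\).

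\emph{Step 2: marginals stay in \(L^2\).} Let \(\pi_j:\R^n\to\R^j\) be the projection onto the first \(j\) coordinates. For \(Y=(y_0,\ldots,y_{n-1})\) and \(Y'=(y_0,\ldots,y_{j-1})\) we have \(S^{(j)}_{Y'}=\pi_j\circ S^{(n)}_Y\), hence \(\rho^{(j)}_{Y'}=(\pi_j)_\#\rho^{(n)}_Y\). If \(Y\in(\supp\mu)^n\) and \(z\in\supp\mu\subset B(0,R_0)\), every coordinate of \(S_Y(z)\) lies in \([0,4R_0^2]\). So the density \(g\) of \(\rho^{(n)}_Y\) is supported in the cube \([0,4R_0^2]^n\). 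The marginal density is \(g_j(t')=\int g(t',t'')\,dt''\), where \(t''\) ranges over a set of measure at most \((4R_0^2)^{n-j}\). Cauchy--Schwarz then gives
\[
\|g_j\|_{L^2(\R^j)}^2\le (4R_0^2)^{n-j}\|g\|_{L^2(\R^n)}^2 .
\]
Consequently \(Y\in G_n\) implies \(Y'\in G_j\), with a quantitative bound.

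\emph{Step 3: Fubini.} Write \(\mu^n=\mu^j\otimes\mu^{n-j}\). Since \(\mu^n(G_n)=1\), Tonelli's theorem shows that for \(\mu^j\)-a.e. \(Y'\) the section \(\{Y'':(Y',Y'')\in G_n\}\) has full \(\mu^{n-j}\)-measure, and in particular is nonempty. For any such \(Y'\), choose one \(Y''\) in the section. Step 2 applied to \(Y=(Y',Y'')\) gives \(Y'\in G_j\). Hence \(\mu^j(G_j)=1\).

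The analytic content is already contained in Corollary~\ref{cor:good-bases}, so I expect the only delicate point to be the Borel measurability in Step 1. If the \(\liminf\) description proves awkward, the fallback is to define \(G_j\) directly as the Borel set \(\{Y':\mu^{n-j}(\text{section of }G_n)=1\}\), whose measurability follows from Tonelli, intersected with \((\supp\mu)^j\). Step 2 already shows that every point of this set has an \(L^2\) star measure.
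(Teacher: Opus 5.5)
Your proposal is correct and follows the paper's proof essentially step by step. The shared steps are the Borel description of $G_j$ via $\liminf_\nu\|\rho^{(j)}_Y*\Psi_{\varepsilon_\nu,j}\|_2<\infty$, the input $\mu^n(G_n)=1$ from Corollary~\ref{cor:good-bases}, and the Cauchy--Schwarz bound for the marginal of a density supported in a bounded cube. The one difference is minor: your Step~2 already gives $G_n\subset p_j^{-1}(G_j)$, where $p_j$ is the projection onto the first $j$ vertices, so the paper concludes directly from $\mu^j(G_j)=\mu^n(p_j^{-1}(G_j))\ge\mu^n(G_n)=1$, and your Tonelli/section argument in Step~3 is unnecessary.
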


\begin{proof}
Fix a sequence $\varepsilon_\nu\downarrow 0$. For each $1\le j\le n$,
let $\Psi_{\varepsilon,j}$ be a nonnegative, $C_c^\infty$,
$L^1$-normalized product approximate identity on $\mathbb R^j$.

For each fixed $\nu$, the map
\[
Y\longmapsto
\bigl\|
\rho_Y^{(j)}*\Psi_{\varepsilon_\nu,j}
\bigr\|_{L^2(\mathbb R^j)}^2
\]
is Borel measurable. Indeed, expanding the square gives
\[
\bigl\|
\rho_Y^{(j)}*\Psi_{\varepsilon_\nu,j}
\bigr\|_{L^2(\mathbb R^j)}^2
=
\iint
K_{\varepsilon_\nu,j}
\bigl(
S_Y^{(j)}(z)-S_Y^{(j)}(z')
\bigr)
\,d\mu(z)\,d\mu(z'),
\]
where
\[
K_{\varepsilon_\nu,j}
=
\Psi_{\varepsilon_\nu,j}
*
\widetilde{\Psi}_{\varepsilon_\nu,j},
\qquad
\widetilde{\Psi}_{\varepsilon_\nu,j}(t)
=
\Psi_{\varepsilon_\nu,j}(-t).
\]
For fixed $\nu$, the integrand is a bounded continuous function of
$(Y,z,z')$. Hence the displayed map is Borel measurable.

Define
\[
G_j
=
\left\{
Y\in(\operatorname{supp}\mu)^j:
\liminf_{\nu\to\infty}
\bigl\|
\rho_Y^{(j)}*\Psi_{\varepsilon_\nu,j}
\bigr\|_{L^2(\mathbb R^j)}
<\infty
\right\}.
\]
Thus $G_j$ is Borel.

We first observe that, for any finite Borel measure $\nu_0$ on
$\mathbb R^j$,
\[
\liminf_{\nu\to\infty}
\bigl\|
\nu_0*\Psi_{\varepsilon_\nu,j}
\bigr\|_{L^2(\mathbb R^j)}
<\infty
\]
if and only if $\nu_0$ has an $L^2(\mathbb R^j)$ density.

Indeed, suppose first that
\[
d\nu_0=f\,dt,
\qquad
f\in L^2(\mathbb R^j).
\]
By Young's inequality,
\[
\bigl\|
\nu_0*\Psi_{\varepsilon_\nu,j}
\bigr\|_2
=
\bigl\|
f*\Psi_{\varepsilon_\nu,j}
\bigr\|_2
\le
\|f\|_2,
\]
so the liminf is finite.

Conversely, suppose that
\[
\liminf_{\nu\to\infty}
\bigl\|
\nu_0*\Psi_{\varepsilon_\nu,j}
\bigr\|_2
<\infty.
\]
Then there exists a subsequence, still denoted by
$\varepsilon_\nu$, such that
\[
\sup_\nu
\bigl\|
\nu_0*\Psi_{\varepsilon_\nu,j}
\bigr\|_2
<\infty.
\]
By weak compactness in $L^2(\mathbb R^j)$, after passing to a further
subsequence there exists $f\in L^2(\mathbb R^j)$ such that
\[
\nu_0*\Psi_{\varepsilon_\nu,j}
\rightharpoonup f
\qquad
\text{weakly in }L^2(\mathbb R^j).
\]
On the other hand, since $\Psi_{\varepsilon_\nu,j}$ is an approximate
identity,
\[
\nu_0*\Psi_{\varepsilon_\nu,j}
\longrightarrow \nu_0
\]
in the sense of distributions. Hence
\[
d\nu_0=f\,dt.
\]
Therefore $G_j$ is precisely the set of $j$-tuples $Y$ for which
$\rho_Y^{(j)}$ has an $L^2(\mathbb R^j)$ density.

By Corollary~8.3,
\[
\mu^n(G_n)=1.
\]

Fix $1\le j<n$, and let
\[
p_j:(\operatorname{supp}\mu)^n
\longrightarrow
(\operatorname{supp}\mu)^j
\]
be the projection onto the first $j$ coordinates.

Suppose
\[
\widetilde Y=(Y,Y')\in G_n.
\]
Then $\rho_{\widetilde Y}^{(n)}$ has an
$L^2(\mathbb R^n)$ density, say $f_{\widetilde Y}$, and
$\rho_Y^{(j)}$ is the marginal of $\rho_{\widetilde Y}^{(n)}$
onto the first $j$ coordinates.

Set
\[
D=\operatorname{diam}(\operatorname{supp}\mu),
\qquad
Q=[0,1+D^2].
\]
Since every squared distance between two points of
$\operatorname{supp}\mu$ lies in $[0,D^2]$, we may take
$f_{\widetilde Y}$ to vanish outside $Q^n$.

The marginal $\rho_Y^{(j)}$ therefore has density
\[
g_Y(t)
=
\int_{Q^{n-j}}
f_{\widetilde Y}(t,u)\,du,
\qquad
t\in\mathbb R^j.
\]
By the Cauchy--Schwarz inequality,
\[
|g_Y(t)|^2
\le
|Q|^{n-j}
\int_{Q^{n-j}}
|f_{\widetilde Y}(t,u)|^2\,du.
\]
Integrating in $t$ gives
\[
\|g_Y\|_{L^2(\mathbb R^j)}^2
\le
|Q|^{n-j}
\|f_{\widetilde Y}\|_{L^2(\mathbb R^n)}^2
<\infty.
\]
Hence $Y\in G_j$. Therefore
\[
G_n\subset p_j^{-1}(G_j).
\]
It follows that
\[
1
=
\mu^n(G_n)
\le
\mu^n\bigl(p_j^{-1}(G_j)\bigr)
=
\mu^j(G_j).
\]
Since $\mu^j(G_j)\le1$, we conclude that
\[
\mu^j(G_j)=1.
\]
This proves the lemma.
\end{proof}

\begin{proof}[Proof of Theorem~\ref{thm:euclidean-codim-one}]
Let \(\mu\) be the \(s\)-Frostman probability measure fixed in
\eqref{eq:frostman}, where
\[
d-1<s<\dim_{\mathrm H}(E).
\]

For \(j=1\), interpret \(\mu^0\) as unit mass on a one-point space.
For every \(1\leq j\leq n\), define
\[
E_j
:=
\left\{
x\in\supp\mu:
\mu^{j-1}
\left(
\left\{
(z_1,\ldots,z_{j-1}):
(x,z_1,\ldots,z_{j-1})\notin G_j
\right\}
\right)
=0
\right\}.
\]
Since \(G_j\) is Borel, the corresponding section-measure function is
Borel, and hence \(E_j\) is Borel. By Fubini's theorem and
Lemma~\ref{lem:codim-marginal-descent},
\[
\mu(E_j)=1.
\]
Thus, for every \(x\in E_j\),
\begin{equation}
\mu^{j-1}
\left(
\left\{
(z_1,\ldots,z_{j-1}):
(x,z_1,\ldots,z_{j-1})\notin G_j
\right\}
\right)
=0.
\label{eq:codim-good-fixed-pin}
\end{equation}

Set
\[
E_\mu
:=
\bigcap_{j=1}^n E_j.
\]
Then \(E_\mu\subset\supp\mu\) is Borel and
\[
\mu(E_\mu)=1.
\]

Fix \(x\in E_\mu\), and write
\[
\lambda_{j,x}
:=
(\Phi_{j,x})_\#\mu^j.
\]
We prove by induction on \(j\) that
\[
\lambda_{j,x}\ll\cL^{t_j},
\qquad 1\leq j\leq n.
\]

For \(j=1\), condition~\eqref{eq:codim-good-fixed-pin} says that
\[
(x)\in G_1.
\]
Hence, by the defining property of \(G_1\),
\[
\lambda_{1,x}
=
\left(
z\longmapsto |z-x|^2
\right)_\#\mu
=
\rho^{(1)}_{(x)}
\ll\cL^1.
\]

Now suppose that \(2\leq j\leq n\) and
\[
\lambda_{j-1,x}\ll\cL^{t_{j-1}}.
\]
For
\[
Z=(z_1,\ldots,z_{j-1}),
\]
define
\[
\rho_{x,Z}
:=
\left(
z\longmapsto
\bigl(
|z-x|^2,
|z-z_1|^2,
\ldots,
|z-z_{j-1}|^2
\bigr)
\right)_\#\mu.
\]
By \eqref{eq:codim-good-fixed-pin},
\[
(x,Z)\in G_j
\]
for \(\mu^{j-1}\)-almost every \(Z\). Therefore, by the defining
property of \(G_j\),
\[
\rho_{x,Z}\ll\cL^j
\]
for \(\mu^{j-1}\)-almost every \(Z\).

After a fixed permutation of coordinates, identify
\[
\R^{t_j}
=
\R^{t_{j-1}}\times\R^j,
\qquad
t_j=t_{j-1}+j.
\]
Let
\[
N\subset\R^{t_{j-1}}\times\R^j
\]
be a Borel set of Lebesgue measure zero. For
\(b\in\R^{t_{j-1}}\), write
\[
N_b
:=
\{a\in\R^j:(b,a)\in N\}.
\]
By Fubini's theorem,
\[
\cL^j(N_b)=0
\]
for \(\cL^{t_{j-1}}\)-almost every \(b\). Since
\[
(\Phi_{j-1,x})_\#\mu^{j-1}
=
\lambda_{j-1,x}
\ll\cL^{t_{j-1}},
\]
it follows that
\[
\cL^j
\left(
N_{\Phi_{j-1,x}(Z)}
\right)
=0
\]
for \(\mu^{j-1}\)-almost every \(Z\).

For \(\mu^{j-1}\)-almost every such \(Z\), we also have
\(\rho_{x,Z}\ll\cL^j\). Hence
\[
\rho_{x,Z}
\left(
N_{\Phi_{j-1,x}(Z)}
\right)
=0.
\]
Integrating first in the new vertex gives
\[
\begin{aligned}
\lambda_{j,x}(N)
&=
\int
\rho_{x,Z}
\left(
N_{\Phi_{j-1,x}(Z)}
\right)
\,d\mu^{j-1}(Z)\\
&=0.
\end{aligned}
\]
Therefore
\[
\lambda_{j,x}\ll\cL^{t_j}.
\]
This completes the induction.

Taking \(j=n=d-1\) and recalling that \(t_n=m\), we obtain
\[
(\Phi_{d-1,x})_\#\mu^{d-1}
\ll\cL^m
\qquad
(x\in E_\mu).
\]

Let \(\Omega_x\) be the corresponding pinned ordinary-distance
configuration measure. Since \(E\) is compact, \(\Omega_x\) is
supported in a bounded subset of \([0,\infty)^m\), and
\[
(\Phi_{d-1,x})_\#\mu^{d-1}
=
\operatorname{Sq}_\#\Omega_x.
\]
Lemma~\ref{lem:squaring-ac} therefore gives
\[
\Omega_x\ll\cL^m.
\]

The same lemma shows that the ordinary-distance tuples of degenerate
\((d-1)\)-simplices form a Lebesgue-null set. Since
\(\Delta_{d-1,x}(E)\) is compact and the degenerate configurations are
characterized by the vanishing of the Gram-determinant polynomial
\[
\widetilde P_{d-1}(r)
:=
P_{d-1}
\bigl(
(r_{ab}^2)_{0\leq a<b\leq d-1}
\bigr),
\]
the set
\[
\Delta^{\mathrm{nd}}_{d-1,x}(E)
=
\Delta_{d-1,x}(E)
\cap
\left\{
r\in\R^m:
\widetilde P_{d-1}(r)\neq0
\right\}
\]
is Borel.

Moreover, the degenerate locus has \(\cL^m\)-measure zero, so
\(\Omega_x\) assigns zero mass to that locus. Since \(\Omega_x\) is a
probability measure,
\[
\Omega_x
\bigl(
\Delta^{\mathrm{nd}}_{d-1,x}(E)
\bigr)
=
1.
\]
Finally, since \(\Omega_x\ll\cL^m\),
\[
\cL^m
\bigl(
\Delta^{\mathrm{nd}}_{d-1,x}(E)
\bigr)
>0
\qquad
(x\in E_\mu).
\]
\end{proof}

Having established the codimension one result for arbitrary compact sets, we now turn to intermediate-dimensional simplices, where the affine-span method does not yield the desired threshold and the Fourier decay of Salem measures provides a different analytic input.

\section{Pinned simplices in Salem sets (Theorem~\ref{thm:pinned-salem})}
\label{sec:pinned-salem}

\subsection{An averaged \texorpdfstring{\(L^2\)}{L2} estimate for pinned distance maps}

This section proves Theorem~\ref{thm:pinned-salem}.  Recall that if
\(E\) is a compact Salem set and
\[
0<\sigma<\dim_{\mathrm H}(E),
\]
then there is a probability measure \(\mu\), supported on \(E\), such
that
\begin{equation}
\lvert\wh\mu(\xi)\rvert
\leq
C_\sigma(1+\lvert\xi\rvert)^{-\sigma/2}.
\label{eq:salem-decay-pinned}
\end{equation}

The proof has two steps.  We first obtain an averaged \(L^2\) estimate
for the distances from one point to a fixed \(j\)-tuple.  We then use
this estimate inductively, keeping one point fixed throughout, to build
the complete pinned simplex configuration measure.  This induction is
the complete-graph case of the graph-building argument in
\cite{BFOPR}; we include the full argument needed here.

Recall that, for $1\le j<d$,
\[
t_j=\binom{j+1}{2},
\]
and, for $x\in\R^d$ with $z_0=x$,
\[
\Phi_{j,x}(z_1,\ldots,z_j)
=
\bigl(|z_a-z_b|^2\bigr)_{0\le a<b\le j}.
\]

For
\[
Y=(y_0,\ldots,y_{j-1})\in(\R^d)^j,
\]
define
\[
S_Y(z)
=
\bigl(
\lvert z-y_0\rvert^2,\ldots,
\lvert z-y_{j-1}\rvert^2
\bigr)
\in\R^j,
\qquad
\rho_Y=(S_Y)_\#\mu.
\]

\begin{proposition}
\label{prop:all-rank-star}
Let \(\mu\) be a compactly supported probability measure on
\(\R^d\).  Suppose that every projection
\((\pi_\omega)_\#\mu\) has a density \(h_\omega\) and
\begin{equation}
P
:=
\sup_{\omega\in\Sph^{d-1}}
\|h_\omega\|_{L^\infty(\R)}
<\infty.
\label{eq:uniform-projection-pinned}
\end{equation}
If \(1\leq j<d\) and \(I_j(\mu)<\infty\), then
\(\rho_Y\) has an \(L^2(\R^j)\) density for
\(\mu^j\)-almost every \(Y\), and
\begin{equation}
\int
\left\|
\frac{d\rho_Y}{dt}
\right\|_2^2
\,d\mu^j(Y)
\leq
2^{-j}P^j I_j(\mu).
\label{eq:all-rank-star-bound}
\end{equation}

\end{proposition}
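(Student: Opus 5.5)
The plan is to mirror the regularization argument of Proposition~\ref{prop:cylinder-average}, except that the anchors \(y_0,\ldots,y_{j-1}\) are now averaged independently against \(\mu\). Each anchor is handled through a one-dimensional projection of \(\mu\), instead of through a Grassmannian average.

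\textbf{Regularization and expansion.} Fix a sequence \(\delta_\nu\downarrow0\). Let \(\Psi_{\delta}=\psi_\delta^{\otimes j}\) be a nonnegative, \(L^1\)-normalized product approximate identity on \(\R^j\). Its autocorrelation \(K_\delta=\Psi_\delta*\widetilde\Psi_\delta=\kappa_\delta^{\otimes j}\) is again a product, with \(\kappa_\delta\ge0\) and \(\|\kappa_\delta\|_{L^1(\R)}=1\). Expanding the square gives
\[
\|\rho_Y*\Psi_\delta\|_2^2=\iint \prod_{i=0}^{j-1}\kappa_\delta\bigl(|z-y_i|^2-|z'-y_i|^2\bigr)\,d\mu(z)\,d\mu(z').
\]
The integrand is continuous and bounded in \((Y,z,z')\), so this quantity is Borel in \(Y\), exactly as in Lemma~\ref{lem:codim-marginal-descent}. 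Integrating in \(d\mu^j(Y)\) and applying Tonelli, the \(y_i\)-integrals factor. For fixed \(z\neq z'\) we obtain the \(j\)-th power of the single-anchor collision integral
\[
\int\kappa_\delta\bigl(|z-y|^2-|z'-y|^2\bigr)\,d\mu(y).
\]

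\textbf{Key step: the perpendicular-bisector linearization.} This is the Euclidean analogue of the finite field hyperplane step. Write \(u=z-z'\) and \(\omega=u/|u|\). Then
\[
|z-y|^2-|z'-y|^2=|z|^2-|z'|^2-2y\cdot u=2|u|\bigl(c_{z,z'}-\pi_\omega(y)\bigr),\qquad c_{z,z'}=\frac{|z|^2-|z'|^2}{2|u|}.
\]
The single-anchor integral therefore equals \(\int_\R\kappa_\delta\bigl(2|u|(c-t)\bigr)h_\omega(t)\,dt\). Using \eqref{eq:uniform-projection-pinned} and substituting \(s=2|u|(c-t)\), this is at most
\[
P\int\kappa_\delta\bigl(2|u|(c-t)\bigr)\,dt=\frac{P}{2|u|}.
\]
The bound is uniform in \(\delta\) and in the direction \(\omega\). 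Taking the product over the \(j\) anchors gives the kernel \(2^{-j}P^j|z-z'|^{-j}\). Hence
\[
\int\|\rho_Y*\Psi_{\delta_\nu}\|_2^2\,d\mu^j(Y)\le2^{-j}P^jI_j(\mu)
\]
uniformly in \(\nu\). The diagonal \(z=z'\) is \(\mu\times\mu\)-null because \(I_j(\mu)<\infty\) forces \(\mu\) to be nonatomic, so it can be discarded.

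\textbf{Passage to the limit.} By Fatou's lemma, \(\int\liminf_\nu\|\rho_Y*\Psi_{\delta_\nu}\|_2^2\,d\mu^j(Y)\le2^{-j}P^jI_j(\mu)\), so the liminf is finite for \(\mu^j\)-almost every \(Y\). The criterion already proved inside Lemma~\ref{lem:codim-marginal-descent} then applies: a finite liminf along an approximate identity is equivalent to the existence of an \(L^2\) density. This criterion uses weak compactness in \(L^2\) together with distributional convergence \(\rho_Y*\Psi_\delta\to\rho_Y\). Weak lower semicontinuity of the norm gives \(\|d\rho_Y/dt\|_2^2\le\liminf_\nu\|\rho_Y*\Psi_{\delta_\nu}\|_2^2\), and integrating yields \eqref{eq:all-rank-star-bound}.

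The main point requiring care is the linearization step: the bound \(P/(2|u|)\) must hold for every pair \((z,z')\), which is where the uniform \(L^\infty\) projection bound (rather than an averaged bound over \(\omega\)) is essential. The product structure of \(K_\delta\) is what lets the \(j\) independent anchors each contribute one factor of \(|u|^{-1}\). The remaining measure-theoretic bookkeeping is routine.
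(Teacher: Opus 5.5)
Your proposal is correct and follows the paper's proof essentially step for step. The shared steps are: a product approximate identity whose autocorrelation factors, Tonelli to reduce to the $j$-th power of the single-anchor collision integral, the perpendicular-bisector linearization (your $c_{z,z'}=(|z|^2-|z'|^2)/(2|u|)$ is exactly the paper's $\bar z\cdot\omega$) giving the bound $P/(2|u|)$, and Fatou plus weak $L^2$ compactness and lower semicontinuity to pass to the limit.
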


\begin{proof}
Choose a nonnegative even one-dimensional approximate identity
\(\varphi_\eps\), and set
\[
\Psi_{\eps,j}(t_1,\ldots,t_j)
=
\prod_{i=1}^j\varphi_\eps(t_i),
\qquad
K_\eps=\varphi_\eps*\varphi_\eps.
\]
Then \(K_\eps\geq0\) and \(\|K_\eps\|_1=1\).

Expanding the square and applying Fubini gives
\begin{align}
\int
\|\rho_Y*\Psi_{\eps,j}\|_2^2\,d\mu^j(Y)
&=
\iint
\prod_{i=0}^{j-1}
\left[
\int
K_\eps\!\left(
\lvert z-y_i\rvert^2
-
\lvert z'-y_i\rvert^2
\right)
d\mu(y_i)
\right]
d\mu(z)\,d\mu(z').
\label{eq:all-rank-star-expansion}
\end{align}

Fix \(z\neq z'\) and write
\[
u=z-z',
\qquad
\omega=\frac{u}{\lvert u\rvert},
\qquad
\bar z=\frac{z+z'}2.
\]
The identity
\begin{equation}
\lvert z-y\rvert^2-\lvert z'-y\rvert^2
=
2\lvert u\rvert
\bigl(\bar z\cdot\omega-y\cdot\omega\bigr)
\label{eq:bisector-linear}
\end{equation}
reduces the inner integral to a one-dimensional projection of \(\mu\).
Using \eqref{eq:uniform-projection-pinned},
\begin{align}
\int
K_\eps\!\left(
\lvert z-y\rvert^2-\lvert z'-y\rvert^2
\right)
d\mu(y)
&=
\int_\R
K_\eps\!\left(
2\lvert u\rvert(\bar z\cdot\omega-t)
\right)
h_\omega(t)\,dt \notag\\
&\leq
P\int_\R
K_\eps\!\left(
2\lvert u\rvert(\bar z\cdot\omega-t)
\right)\,dt \notag\\
&=
\frac{P}{2\lvert u\rvert}
\int_\R K_\eps(r)\,dr \notag\\
&=
\frac{P}{2\lvert u\rvert}.
\label{eq:one-anchor-bound}
\end{align}
Here we used the change of variables
\[
r=2\lvert u\rvert(\bar z\cdot\omega-t),
\qquad
\lvert dt\rvert=\frac{\lvert dr\rvert}{2\lvert u\rvert},
\]
together with \(\|h_\omega\|_\infty\leq P\) and
\(\|K_\eps\|_1=1\).

Since \(I_j(\mu)<\infty\), the diagonal has
\(\mu\times\mu\)-measure zero.  Substituting
\eqref{eq:one-anchor-bound} into
\eqref{eq:all-rank-star-expansion}, we obtain, uniformly in \(\eps\),
\[
\int
\|\rho_Y*\Psi_{\eps,j}\|_2^2\,d\mu^j(Y)
\leq
2^{-j}P^j
\iint
\lvert z-z'\rvert^{-j}\,d\mu(z)\,d\mu(z')
=
2^{-j}P^j I_j(\mu).
\]

Choose a sequence \(\eps_\nu\downarrow0\).  By Fatou's lemma,
for \(\mu^j\)-almost every \(Y\),
\[
\liminf_{\nu\to\infty}
\|\rho_Y*\Psi_{\eps_\nu,j}\|_2
<\infty.
\]
For such \(Y\), choose an \(L^2\)-bounded subsequence.  Weak compactness
then gives a further subsequence converging weakly in \(L^2(\R^j)\), while
approximate identity convergence identifies the distributional limit
with \(\rho_Y\).  Hence \(\rho_Y\) is represented by an \(L^2\)
function.  Weak lower semicontinuity, followed by Fatou's lemma in
\(Y\), proves \eqref{eq:all-rank-star-bound}.
\end{proof}

The proposition controls the distances from one new point to a fixed
\(j\)-tuple.  To recover all pairwise distances while keeping one point
fixed, we now add the remaining points one at a time and retain the
already constructed distance data.

\subsection{Induction with a fixed pin}

\begin{lemma}
\label{lem:pinned-star-induction}
Let \(\mu\) be a compactly supported probability measure on \(\R^d\),
and let \(1\leq k<d\).  Suppose that
\eqref{eq:uniform-projection-pinned} holds and
\[
I_j(\mu)<\infty
\qquad
(1\leq j\leq k).
\]
Then there exists a Borel set
\(E_\mu\subset\supp\mu\), with \(\mu(E_\mu)=1\), such that
\[
(\Phi_{k,x})_\#\mu^k
\ll
\cL^{t_k}
\qquad (x\in E_\mu).
\]
\end{lemma}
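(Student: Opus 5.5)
The plan is to follow the assembly step in the proof of Theorem~\ref{thm:euclidean-codim-one} essentially verbatim, with Proposition~\ref{prop:all-rank-star} taking over the role of Corollary~\ref{cor:good-bases} and Lemma~\ref{lem:codim-marginal-descent}. First, for each $1\le j\le k$, apply Proposition~\ref{prop:all-rank-star} with $I_j(\mu)<\infty$ and the uniform projection bound \eqref{eq:uniform-projection-pinned}; here $j\le k<d$ keeps us in the range of that proposition. This gives that $\rho_Y=(S_Y)_\#\mu$ has an $L^2(\R^j)$ density for $\mu^j$-almost every $Y\in(\R^d)^j$. Then define $G_j\subset(\supp\mu)^j$ exactly as in the proof of Lemma~\ref{lem:codim-marginal-descent}: it is the set of $Y$ with $\liminf_\nu\|\rho_Y*\Psi_{\eps_\nu,j}\|_2<\infty$. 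The argument given there shows that $G_j$ is Borel and coincides with the set of $Y$ for which $\rho_Y$ has an $L^2$ density. Hence $\mu^j(G_j)=1$ for every $1\le j\le k$ directly, and no marginal descent is needed, since the energy hypothesis is assumed at every rank.

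Next, choose the pins. For $1\le j\le k$, let $E_j$ be the set of $x\in\supp\mu$ such that $(x,Z)\in G_j$ for $\mu^{j-1}$-almost every $Z=(z_1,\ldots,z_{j-1})$. For $j=1$, $\mu^0$ is a point mass, so the condition is simply $(x)\in G_1$. Borel measurability of the section-measure function and Fubini give $\mu(E_j)=1$. Then set $E_\mu=\bigcap_{j\le k}E_j$.

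Finally, fix $x\in E_\mu$ and prove $\lambda_{j,x}:=(\Phi_{j,x})_\#\mu^j\ll\cL^{t_j}$ by induction on $j$.

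\emph{Base case.} For $j=1$, we have $\lambda_{1,x}=\rho_{(x)}\ll\cL^1$ because $(x)\in G_1$.

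\emph{Inductive step.} Split $\R^{t_j}=\R^{t_{j-1}}\times\R^j$, so that the new coordinates are the squared distances from $z_j$ to $(x,z_1,\ldots,z_{j-1})$. Take a Lebesgue-null Borel set $N$. By Fubini, its slices $N_b$ are $\cL^j$-null for $\cL^{t_{j-1}}$-a.e.\ $b$. The induction hypothesis then makes $N_{\Phi_{j-1,x}(Z)}$ null for $\mu^{j-1}$-a.e.\ $Z$. Since $x\in E_j$, we also have $\rho_{(x,Z)}\ll\cL^j$ for $\mu^{j-1}$-a.e.\ $Z$. Integrating first in the new vertex gives
\[
\lambda_{j,x}(N)=\int\rho_{(x,Z)}\bigl(N_{\Phi_{j-1,x}(Z)}\bigr)\,d\mu^{j-1}(Z)=0 .
\]
Taking $j=k$ gives the claim.

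The only point requiring care is measurability: that $G_j$ is Borel, and that $Y\mapsto\rho_Y(B)$ and the disintegration formula for $\lambda_{j,x}(N)$ are legitimate. Both follow from the continuity of $(Y,z,z')\mapsto K_\eps(S_Y(z)-S_Y(z'))$ and from Tonelli, exactly as in Section~\ref{sec:assembly}. So I expect no genuine obstacle beyond transcribing that argument.
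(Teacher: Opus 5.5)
Your proposal is correct and matches the paper's proof essentially step for step. The paper likewise defines the Borel sets $\mathcal G_j$ via the $\liminf$ of mollified $L^2$ norms and gets $\mu^j(\mathcal G_j)=1$ directly from Proposition~\ref{prop:all-rank-star} at each rank $j\le k$, with no marginal descent; it then selects the pins by Fubini on the section measures and runs the same fixed-pin slicing induction on $\R^{t_{j-1}}\times\R^j$.
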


\begin{proof}
Choose a nonnegative even function
\(\varphi\in C_c^\infty(\mathbb R)\) with
$
\int_{\mathbb R}\varphi=1,$
and put
\[
\varphi_\varepsilon(t)
=
\varepsilon^{-1}\varphi(t/\varepsilon),
\qquad
\Psi_{\varepsilon,j}(t_1,\ldots,t_j)
=
\prod_{i=1}^j\varphi_\varepsilon(t_i).
\]

Choose a sequence \(\varepsilon_\nu\downarrow0\). For each
\(1\leq j\leq k\), define
\[
\mathcal G_j
=
\left\{
Y\in(\supp\mu)^j:
\liminf_{\nu\to\infty}
\|\rho_Y*\Psi_{\varepsilon_\nu,j}\|_2<\infty
\right\}.
\]

For fixed \(\nu\), the map
\[
Y\longmapsto
\|\rho_Y*\Psi_{\varepsilon_\nu,j}\|_2^2
\]
is Borel measurable. Indeed, if
\[
K_\varepsilon
=
\varphi_\varepsilon*\varphi_\varepsilon,
\]
then expanding the square gives
\[
\|\rho_Y*\Psi_{\varepsilon,j}\|_2^2
=
\iint
\prod_{i=0}^{j-1}
K_\varepsilon
\bigl(
|z-y_i|^2-|z'-y_i|^2
\bigr)
\,d\mu(z)\,d\mu(z').
\]
For fixed \(\varepsilon>0\), the integrand is a bounded continuous
function of \((Y,z,z')\). Hence the displayed map is Borel, and so
\(\mathcal G_j\) is Borel.

The approximate identity shows that
\[
Y\in\mathcal G_j
\quad\Longleftrightarrow\quad
\rho_Y
\text{ has an }L^2(\mathbb R^j)\text{ density}.
\]
Indeed, the forward implication follows by taking an
\(L^2\)-bounded subsequence, using weak compactness in \(L^2\), and
identifying the distributional limit with \(\rho_Y\). Conversely, if
\[
d\rho_Y=f_Y\,dt,
\qquad
f_Y\in L^2(\mathbb R^j),
\]
then Young's inequality gives
\[
\|\rho_Y*\Psi_{\varepsilon,j}\|_2
\leq
\|f_Y\|_2.
\]
Therefore Proposition~\ref{prop:all-rank-star} implies
\[
\mu^j(\mathcal G_j)=1.
\]

For \(x\in\supp\mu\) and \(1\leq j\leq k\), define
\[
q_j(x)
:=
\mu^{j-1}
\left(
\left\{
Z\in(\supp\mu)^{j-1}:
(x,Z)\notin\mathcal G_j
\right\}
\right),
\]
where, for \(j=1\), we interpret \(\mu^0\) as unit mass on a
one-point space. Since \(\mathcal G_j\) is Borel, the measurable-section
theorem shows that \(q_j\) is Borel.

Moreover, by Fubini's theorem and \(\mu^j(\mathcal G_j)=1\),
\[
\int_{\supp\mu} q_j(x)\,d\mu(x)
=
\mu^j\bigl((\supp\mu)^j\setminus\mathcal G_j\bigr)
=
0.
\]
Thus \(q_j(x)=0\) for \(\mu\)-almost every \(x\). Define
\[
E_\mu
:=
\supp\mu\cap
\bigcap_{j=1}^k\{x:q_j(x)=0\}.
\]
Then \(E_\mu\) is Borel and
\[
\mu(E_\mu)=1.
\]
In particular, for every \(x\in E_\mu\) and every \(1\leq j\leq k\),
\begin{equation}
\mu^{j-1}
\left(
\left\{
Z:(x,Z)\notin\mathcal G_j
\right\}
\right)
=0.
\label{eq:salem-good-fixed-pin}
\end{equation}

Fix \(x\in E_\mu\). We prove by induction on \(j\) that
\[
\lambda_{j,x}
:=
(\Phi_{j,x})_\#\mu^j
\ll
\cL^{t_j},
\qquad 1\leq j\leq k.
\]

For \(j=1\), \eqref{eq:salem-good-fixed-pin} says that
\((x)\in\mathcal G_1\). Hence
\[
\lambda_{1,x}
=
\left(
z\longmapsto |z-x|^2
\right)_\#\mu
\ll
\cL^1.
\]

Now let \(2\leq j\leq k\), and suppose that
\[
\lambda_{j-1,x}
=
(\Phi_{j-1,x})_\#\mu^{j-1}
\ll
\cL^{t_{j-1}}.
\]
For
\[
Z=(z_1,\ldots,z_{j-1}),
\]
define
\[
\rho_{x,Z}
:=
\left(
z\longmapsto
\bigl(
|z-x|^2,
|z-z_1|^2,
\ldots,
|z-z_{j-1}|^2
\bigr)
\right)_\#\mu.
\]
By \eqref{eq:salem-good-fixed-pin},
\[
\rho_{x,Z}\ll\cL^j
\]
for \(\mu^{j-1}\)-almost every \(Z\).

After a fixed permutation of coordinates, identify
\[
\R^{t_j}
=
\R^{t_{j-1}}\times\R^j,
\qquad
t_j=t_{j-1}+j.
\]
Let
\[
N\subset\R^{t_{j-1}}\times\R^j
\]
be a Borel set of Lebesgue measure zero. For
\(b\in\R^{t_{j-1}}\), write
\[
N_b
:=
\{a\in\R^j:(b,a)\in N\}.
\]
By Fubini's theorem,
\[
\cL^j(N_b)=0
\]
for \(\cL^{t_{j-1}}\)-almost every \(b\). Since
\[
\lambda_{j-1,x}\ll\cL^{t_{j-1}},
\]
we have
\[
\cL^j
\left(
N_{\Phi_{j-1,x}(Z)}
\right)
=0
\]
for \(\mu^{j-1}\)-almost every \(Z\).

For almost every such \(Z\), we also have
\(\rho_{x,Z}\ll\cL^j\). Therefore
\[
\rho_{x,Z}
\left(
N_{\Phi_{j-1,x}(Z)}
\right)
=0.
\]
Integrating first in the new vertex gives
\[
\begin{aligned}
\lambda_{j,x}(N)
&=
\int
\rho_{x,Z}
\left(
N_{\Phi_{j-1,x}(Z)}
\right)
\,d\mu^{j-1}(Z)\\
&=0.
\end{aligned}
\]
Hence
\[
\lambda_{j,x}\ll\cL^{t_j}.
\]

This completes the induction and proves the lemma upon taking \(j=k\).
\end{proof}

\begin{corollary}
\label{cor:pinned-measure-criterion}
Let \(\mu\) be a compactly supported probability measure on
\(\R^d\), and let \(1\leq k<d\). Suppose that
\[
I_k(\mu)<\infty
\]
and
\begin{equation}
\sup_{\omega\in S^{d-1}}
\int_{\R}
|\widehat{\mu}(r\omega)|\,dr
<\infty.
\label{eq:line-fourier-condition}
\end{equation}
Then there exists a Borel set
\(E_\mu\subset\supp\mu\), with \(\mu(E_\mu)=1\), such that, for every
\(x\in E_\mu\), both the pinned squared-distance configuration measure
\[
(\Phi_{k,x})_\#\mu^k
\]
and the corresponding pinned ordinary-distance configuration measure
are absolutely continuous with respect to \(\cL^{t_k}\). In
particular,
\[
\cL^{t_k}
\bigl(
\Delta^{\mathrm{nd}}_{k,x}(\supp\mu)
\bigr)>0.
\]
\end{corollary}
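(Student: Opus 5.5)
The plan is to reduce Corollary~\ref{cor:pinned-measure-criterion} to Lemma~\ref{lem:pinned-star-induction}, then finish exactly as in the last part of the proof of Theorem~\ref{thm:euclidean-codim-one}. Two hypotheses of the lemma have to be checked: the uniform projection bound \eqref{eq:uniform-projection-pinned}, and finiteness of \(I_j(\mu)\) for every \(1\le j\le k\).

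For the projection bound, fix \(\omega\in\Sph^{d-1}\). The one-dimensional measure \((\pi_\omega)_\#\mu\) has Fourier transform \(r\mapsto\wh\mu(r\omega)\). By \eqref{eq:line-fourier-condition} this function lies in \(L^1(\R)\), with norm bounded by a constant \(P\) independent of \(\omega\). Fourier inversion then shows that \((\pi_\omega)_\#\mu\) has a continuous density
\[
h_\omega(t)=\int_\R \wh\mu(r\omega)e^{2\pi i rt}\,dr,
\qquad \|h_\omega\|_\infty\le P .
\]
The identification of the measure with this density is the standard one: both have the same Fourier transform as tempered distributions. Hence \eqref{eq:uniform-projection-pinned} holds.

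For the energies, note that \(\mu\) is compactly supported, so \(|z-z'|\le D\) on \(\supp\mu\times\supp\mu\). For \(j\le k\) this gives
\[
|z-z'|^{-j}\le D^{k-j}|z-z'|^{-k},
\]
and therefore \(I_j(\mu)\le D^{k-j}I_k(\mu)<\infty\). Lemma~\ref{lem:pinned-star-induction} now produces a Borel set \(E_\mu\) with \(\mu(E_\mu)=1\) and \((\Phi_{k,x})_\#\mu^k\ll\cL^{t_k}\) for every \(x\in E_\mu\).

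To pass to ordinary distances, let \(\Omega_x=(D_{k,x})_\#\mu^k\). This measure is supported in a bounded subset of \([0,\infty)^{t_k}\) and satisfies \(\operatorname{Sq}_\#\Omega_x=(\Phi_{k,x})_\#\mu^k\), so Lemma~\ref{lem:squaring-ac} gives \(\Omega_x\ll\cL^{t_k}\). The second part of that lemma says that degenerate ordinary-distance tuples lie in the zero set of the nonzero polynomial \(\widetilde P_k\), which is Lebesgue-null. Hence \(\Omega_x\) gives full mass to \(\Delta_{k,x}(\supp\mu)\cap\{\widetilde P_k\ne0\}=\Delta^{\mathrm{nd}}_{k,x}(\supp\mu)\), and this set is Borel because \(\Delta_{k,x}(\supp\mu)\) is compact. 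A set of positive \(\Omega_x\)-measure cannot be Lebesgue-null when \(\Omega_x\ll\cL^{t_k}\), so \(\cL^{t_k}(\Delta^{\mathrm{nd}}_{k,x}(\supp\mu))>0\).

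The step needing the most care is the projection bound: one must check that the Fourier-inversion density really represents \((\pi_\omega)_\#\mu\) for every \(\omega\), not merely almost every \(\omega\). The uniform integrability in \eqref{eq:line-fourier-condition} is what makes this work for all \(\omega\) at once. Everything else is bookkeeping on top of Lemma~\ref{lem:pinned-star-induction} and Lemma~\ref{lem:squaring-ac}.
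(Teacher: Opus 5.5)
Your proposal is correct and follows the paper's proof essentially step for step. Fourier inversion along lines gives the uniform projection bound \eqref{eq:uniform-projection-pinned}, and $I_k(\mu)<\infty$ gives $I_j(\mu)<\infty$ for $j\le k$; you make the comparison $I_j(\mu)\le D^{k-j}I_k(\mu)$ explicit, which the paper leaves implicit. Lemma~\ref{lem:pinned-star-induction} then yields the pinned squared-distance statement, and Lemma~\ref{lem:squaring-ac} handles the ordinary distances and the null degenerate locus. One cosmetic point: inversion works for every $\omega$ simply because the supremum in \eqref{eq:line-fourier-condition} is finite, so each line restriction lies in $L^1$; no uniform-integrability property is needed.
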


\begin{proof}
For \(\omega\in S^{d-1}\), the Fourier transform of the projected
measure satisfies
\[
\widehat{(\pi_\omega)_\#\mu}(r)
=
\widehat{\mu}(r\omega).
\]
Hence \eqref{eq:line-fourier-condition} and Fourier inversion imply
that \((\pi_\omega)_\#\mu\) has a bounded continuous density
\(h_\omega\), with
\[
\sup_{\omega\in S^{d-1}}
\|h_\omega\|_{L^\infty(\R)}
<\infty.
\]

Moreover, \(I_k(\mu)<\infty\) implies
\[
I_j(\mu)<\infty,
\qquad 1\leq j\leq k.
\]
Lemma~\ref{lem:pinned-star-induction} therefore gives a Borel set
\(E_\mu\subset\supp\mu\), with \(\mu(E_\mu)=1\), such that
\[
(\Phi_{k,x})_\#\mu^k\ll\cL^{t_k}
\qquad (x\in E_\mu).
\]

Let \(\Omega_{k,x}\) denote the corresponding pinned ordinary-distance
configuration measure. Then
\[
(\Phi_{k,x})_\#\mu^k
=
\operatorname{Sq}_\#\Omega_{k,x}.
\]
Since \(\mu\) is compactly supported, Lemma~\ref{lem:squaring-ac}
implies
\[
\Omega_{k,x}\ll\cL^{t_k}.
\]
The degenerate ordinary-distance tuples form a Lebesgue-null set, so
\[
\cL^{t_k}
\bigl(
\Delta^{\mathrm{nd}}_{k,x}(\supp\mu)
\bigr)>0.
\]
\end{proof}

\begin{remark}
\label{rem:pinned-measure-hypotheses}
Condition\eqref{eq:line-fourier-condition} holds, for example, if
\[
\lvert\wh\mu(\xi)\rvert
\lesssim
(1+\lvert\xi\rvert)^{-\beta/2}
\]
for some \(\beta>2\). The Fourier condition and the finite
\(k\)-energy condition in Corollary~\ref{cor:pinned-measure-criterion}
are conditions on the same measure. When \(k>2\), the separate
set level assumptions that the Fourier dimension of \(E\) is greater
than \(2\) and that \(\dim_{\mathrm H}(E)>k\) do not by themselves
guarantee a measure satisfying both hypotheses, and hence do not by
themselves allow an application of the corollary. When \(k=2\), by
contrast, Fourier dimension greater than \(2\) is sufficient, since
Fourier decay with exponent \(\beta>2\) also implies
\(I_2(\mu)<\infty\).
\end{remark}

\subsection{Proof of Theorem~\ref{thm:pinned-salem}}

The proof of Theorem~\ref{thm:pinned-salem} will be a consequence of
Corollary~\ref{cor:pinned-measure-criterion} once we verify its two hypotheses for a suitable Salem measure.

Now choose
\[
k<\sigma<\dim_H(E),
\]
and let $\mu$ be a probability measure supported on $E$ such that
\[
|\widehat{\mu}(\xi)|
\lesssim_\sigma
(1+|\xi|)^{-\sigma/2}.
\]
Since $k\ge 2$, we have $\sigma>2$. Therefore, for every
$\omega\in S^{d-1}$,
\[
\int_{\R}
|\widehat{\mu}(r\omega)|\,dr
\lesssim_\sigma
\int_{\R}
(1+|r|)^{-\sigma/2}\,dr
<\infty,
\]
uniformly in $\omega$.

We also claim that
\[
I_k(\mu)<\infty.
\]
Indeed, by the Fourier representation of the Riesz energy,
\[
I_k(\mu)
=
c_{d,k}
\int_{\R^d}
|\widehat{\mu}(\xi)|^2
|\xi|^{k-d}\,d\xi.
\]
Using the decay of $\widehat{\mu}$ and polar coordinates,
\[
I_k(\mu)
\lesssim
\int_0^1 r^{k-1}\,dr
+
\int_1^\infty r^{k-\sigma-1}\,dr
<\infty,
\]
since $k<\sigma$.

Thus $\mu$ satisfies both hypotheses of
Corollary\ref{cor:pinned-measure-criterion}. Hence there exists a Borel set
$E_\mu\subset\supp\mu$, with $\mu(E_\mu)=1$, such that for every
$x\in E_\mu$,
\[
(\Phi_{k,x})_\#\mu^k\ll\cL^{t_k},
\]
and the corresponding pinned ordinary-distance configuration measure
is also absolutely continuous. In particular,
\[
\cL^{t_k}
\bigl(
\Delta_{k,x}^{\mathrm{nd}}(\supp\mu)
\bigr)
>0.
\]
Since $\supp\mu\subset E$, it follows that
\[
\cL^{t_k}
\bigl(
\Delta_{k,x}^{\mathrm{nd}}(E)
\bigr)
>0
\qquad
(x\in E_\mu).
\]
This proves Theorem~\ref{thm:pinned-salem}.

We finish by comparing the Salem argument with the affine-span method of
Theorem~\ref{thm:euclidean-codim-one}.  The following remark identifies
the bottleneck that prevents the latter from reaching the threshold $k$.

\begin{remark}[]
\label{rem:affine-span-scope}
Let $2\le k\le d-2$, set $\ell=k-1$ and
$c=d-k+1$.  If one repeats the affine-span argument of
Sections~7--9 at rank $k$, then the Blaschke--Petkantschin step requires
\[
\int V_\ell(Y)^{-c}\,d\mu^k(Y)<\infty.
\]
For an $s$-Frostman measure, the worst Gram--Schmidt factor is the last
one, for which the preceding affine span has dimension at most $k-2$.
The Frostman tube estimate therefore gives summability when
\[
s-(k-2)>d-k+1,
\]
that is,
\[
s>d-1.
\]

By contrast, the corresponding cylindrical estimate is controlled by
$I_k(\mu)$, and hence only requires $s>k$.  Thus, within the present
affine-span implementation, the bottleneck is the absolute continuity of
the affine-span distribution rather than the distance-star estimate.
Consequently, rerunning the codimension-one argument directly at rank
$k$ does not improve the threshold $d-1$.

This is precisely what the Salem argument avoids: Fourier decay supplies
uniform control of one-dimensional projections and finite $I_k$-energy,
allowing the proof to get the threshold
\[
\dim_{\mathrm H}(E)>k.
\]
\end{remark}

\end{document}